\theoremstyle{plain}
\newtheorem{thm}{Th\'eor\`eme}[section] 
\newtheorem{prop}[thm]{Proposition}
\newtheorem{lem}[thm]{Lemme} 
\newtheorem{coro}[thm]{Corollaire}
\newtheorem{quest}[thm]{Question}
\newtheorem{fait}[thm]{Fait}
\theoremstyle{remark}
\newtheorem{remark}[thm]{Remarque}
\theoremstyle{definition}
\newcommand{\ZZ}{\mathbb{Z}}
\newcommand{\RR}{\mathbb{R}}
\renewcommand{\SS}{\mathbb{S}}
\newcommand{\NN}{\mathbb{N}}
\newcommand{\DD}{\mathbb{D}}
\newcommand{\Diff}{\mathrm{Diff}}
\renewcommand{\epsilon}{\varepsilon}
\begin{document}

\sloppy


\title[Points fixes communs]{Des points fixes communs pour des diff\'eomorphismes de $\mathbb{S}^2$ qui commutent et pr\'eservent  une mesure de probabilit\'e}
\author{F. B\'eguin, P. Le Calvez, S. Firmo et T. Miernowski}

\address{F. B\'eguin, Universit\'e Paris-Sud, France}
\address{P. Le Calvez, Universit\'e Pierre et Marie Curie, France}
\address{S. Firmo, Universidade Federal Fluminense, Br\'esil}
\address{T. Miernowski, Universit\'e de Aix-Marseille II, France}

\date{\today{}} 

\subjclass[2010]{37E30, 37E45}

\keywords{point fixe,  mesure invariante, hom\'{e}omorphisme
conservatif, r\'ecurrence, nombre de rotation, feuilletage
topologique, nombre d'intersection}

\thispagestyle{empty}

\begin{abstract}
Nous montrons des r\'esultats d'existence de points fixes communs pour des hom\'eomor\-phismes du plan $\RR^2$ ou la sph\`ere $\SS^2$, qui commutent deux \`a deux et pr\'eservent une mesure de probabilit\'e. Par exemple, nous montrons que des $C^1$-diff\'eomorphismes $f_1,\dots,f_n$ de $\mathbb{S}^2$ suffisamment proches de l'identit\'e, qui commutent deux \`a deux, et qui pr\'eservent une mesure de probabilit\'e dont le support n'est pas r\'eduit \`a un point, ont au moins deux points fixes communs.
\end{abstract}

\thanks{Ce travail n'aurait sans doute pas \'et\'e possible sans du soutien financier de l'accord de coop\'eration France-Br\'esil, qui a financ\'e les s\'ejours de F. B., P. L. C. et T. M. \`a l'Universidade Federal Fluminense.}

\maketitle


\section{Introduction}

Le but de cet article est d'\'etablir de nouveaux th\'eor\`emes d'existence de points fixes communs pour des hom\'eomorphismes de surfaces qui commutent. En d'autres termes, on cherche des points fixes pour les actions continues de $\mathbb{Z}^n$ sur les surfaces. Commen\c cons par un bref historique des r\'esultats de ce type. 

\subsection{R\'esultats ``\`a la Lefschetz"}

Le th\'eor\`eme de Poincar\'e-Lefschetz implique que tout hom\'eomorphisme d'une surface de caract\'eristique d'Euler non-nulle, isotope \`a l'identit\'e, admet un point fixe. Il est donc l\'egitime de se demander si $n$ hom\'eomorphismes d'une surface de caract\'eristique d'Euler non-nulle, isotopes \`a l'identit\'e, qui commutent deux \`a deux, ont toujours un point fixe commun. Autrement dit, une action de $\mathbb{Z}^n$ sur une surface de caract\'eristique d'Euler non-nulle via des hom\'eomorphismes isotopes \`a l'identit\'e  poss\`ede un point fixe global.

En 1964, E. Lima a montr\'e que toute action continue de $\mathbb{R}^n$ sur une surface ferm\'ee de caract\'eristique d'Euler non-nulle admet un point fixe (\cite{Lima1964}). Ce r\'esultat a \'et\'e g\'en\'eralis\'e par J.-F. Plante aux actions continues de groupes de Lie nilpotents (\cite{Plante1986}). L'existence de point fixe pour des actions de groupes discrets est bien s\^ur plus d\'elicate \`a montrer. Le premier r\'esultat concernant les actions de $\mathbb{Z}^n$, motiv\'e par une question de th\'eorie des feuilletages\footnote{Un feuilletage proche d'une fibration triviale en tores de dimension $n$ au-dessus d'une surface ferm\'ee de caract\'eristique d'Euler non-nulle poss\`ede-t-il toujours une feuille compacte~?}, a \'et\'e montr\'e par C.\;Bonatti il y a une vingtaine d'ann\'ees.  Nous notons $\Diff^1(S)$ l'ensemble des $C^1$-diff\'eomorphismes d'une surface $S$.

\begin{thm}[Bonatti \cite{Bonatti1989,Bonatti1990}]
\label{theo.Bonatti}
Pour toute surface ferm\'ee $S$ de caract\'eristique d'Euler non-nulle, il existe un voisinage $U$ de l'identit\'e dans $\Diff^1(S)$ (muni de la topologie $C^1$) avec la propri\'et\'e suivante: des \'el\'ements $f_1,\dots,f_n$ de $U$ qui commutent deux \`a deux ont un point fixe commun. 
\end{thm}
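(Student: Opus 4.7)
Ma strat\'egie serait de proc\'eder par r\'ecurrence sur $n$. Le cas $n=1$ d\'ecoule directement du th\'eor\`eme de Lefschetz pour un hom\'eomorphisme isotope \`a l'identit\'e: si $f$ est suffisamment $C^1$-proche de $\Id$, elle est isotope \`a l'identit\'e, et la somme des indices de ses points fixes vaut $\chi(S) \neq 0$. L'observation centrale pour le pas r\'ecursif est que si $g$ commute avec $f$, alors $g$ pr\'eserve $\Fix(f)$ et permute ses composantes connexes en respectant leurs indices de Lefschetz.

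Le c\oe ur de la preuve consisterait \`a associer, \`a tout diff\'eomorphisme $f$ suffisamment proche de l'identit\'e, un sous-ensemble compact canonique $K(f) \subseteq \Fix(f)$ non vide et stable par tout diff\'eomorphisme commutant avec $f$. Une construction naturelle passe par l'analyse des indices sur les composantes connexes du lieu fixe: on peut par exemple prendre la r\'eunion des composantes d'indice maximal, ou plus g\'en\'eralement une composante distingu\'ee par un invariant num\'erique stable par conjugaison. Pour $f$ proche de l'identit\'e, le champ de d\'eplacement local $x \mapsto f(x)-x$ \'etant petit dans toute carte, les indices locaux sont bien contr\^ol\'es et leur somme vaut $\chi(S) \neq 0$, ce qui assure que $K(f)$ est non vide. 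La canonicit\'e de la construction entra\^ine imm\'ediatement sa stabilit\'e par les diff\'eomorphismes commutant avec $f$.

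Appliquant ceci \`a $f_1$, l'ensemble $K(f_1)$ serait pr\'eserv\'e globalement par chacun des $f_2,\dots,f_n$. Il resterait alors \`a trouver un point fixe commun \`a $f_2|_{K(f_1)},\dots,f_n|_{K(f_1)}$, soit en appliquant directement un th\'eor\`eme de point fixe adapt\'e aux compacts d'indice total non nul, soit en r\'eduisant le probl\`eme \`a une situation de r\'ecurrence via un voisinage r\'egulier de $K(f_1)$ dans $S$ dont la caract\'eristique d'Euler reste non nulle. La commutativit\'e garantit que les restrictions sont bien d\'efinies et commutent toujours deux \`a deux.

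L'obstacle principal sera la mise en place rigoureuse du cran d'induction: $K(f_1)$ n'est pas a priori une sous-vari\'et\'e et ne se pr\^ete pas directement \`a l'application de l'hypoth\`ese de r\'ecurrence telle qu'\'enonc\'ee. Il faudra vraisemblablement renforcer l'hypoth\`ese de r\'ecurrence en y incorporant un contr\^ole sur la nature topologique et les indices des composantes essentielles du lieu fixe, et exploiter la proximit\'e \`a l'identit\'e pour assurer, \`a chaque \'etape, que la restriction des diff\'eomorphismes au compact extrait reste dans un r\'egime o\`u un analogue du th\'eor\`eme de Lefschetz demeure valide.
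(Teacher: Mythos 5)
Ce théorème n'est pas démontré dans l'article~: il est cité d'après Bonatti \cite{Bonatti1989,Bonatti1990}, et les auteurs rappellent seulement (section «~Questions et problèmes~», point 4) que la preuve de Bonatti est élémentaire, reposant sur la définition de la topologie $C^1$ et sur le fait que l'existence d'une courbe d'indice~$1$ force l'existence d'un point fixe. Votre schéma est différent, et il comporte des lacunes réelles. D'abord, l'ensemble canonique $K(f)$ que vous proposez n'est pas bien défini en général~: $\Fix(f)$ peut avoir une infinité de composantes connexes, celles-ci peuvent s'accumuler les unes sur les autres (elles ne sont donc pas isolées et n'ont pas d'indice de Lefschetz bien défini), le «~maximum~» des indices peut ne pas exister ou ne pas être atteint, et la réunion des composantes d'indice maximal peut ne pas être fermée. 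La proximité $C^1$ à l'identité ne régularise en rien la topologie de $\Fix(f)$, qui peut être un fermé arbitraire de $S$.

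Ensuite, et c'est le point crucial, le cran d'induction n'est pas effectué~: savoir que $f_2,\dots,f_n$ préservent un compact $K(f_1)\subset\Fix(f_1)$ ne fournit aucun point fixe. L'indice dont vous disposez est celui de $f_1$ le long de $K(f_1)$, pas celui de $f_2$~; il n'existe pas de «~théorème de point fixe adapté aux compacts d'indice total non nul~» pour la restriction de $f_2$ à un tel compact (une composante de $\Fix(f_1)$ homéomorphe à un cercle peut être tournée par $f_2$ sans point fixe, et rien dans votre argument n'exclut cela), et un voisinage régulier de $K(f_1)$ n'est ni invariant par $f_2,\dots,f_n$ ni de caractéristique d'Euler contrôlée~; de plus on sort de la catégorie des surfaces fermées, donc l'hypothèse de récurrence telle quelle ne s'applique pas. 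Or c'est exactement là que réside la difficulté du théorème~: il faut utiliser quantitativement la petitesse $C^1$ (contrôle de la variation angulaire du champ de déplacement $x\mapsto f_i(x)-x$ dans les cartes) pour construire, en l'absence de point fixe commun, une courbe d'indice~$1$ et aboutir à une contradiction — c'est le mécanisme de Bonatti, totalement absent de votre proposition, dont vous reconnaissez d'ailleurs vous-même qu'elle bute sur ce point. En l'état, il s'agit d'un plan d'attaque naïf plutôt que d'une preuve.
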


Le fait que, dans l'\'enonc\'e ci-dessus, le voisinage $U$ ne d\'epend pas du nombre $n$ de diff\'eomorphismes consid\'er\'es a \'et\'e montr\'e par S.\;Firmo dans~\cite{Firmo2005}. Le r\'esultat de Bonatti a \'et\'e g\'en\'eralis\'e dans plusieurs directions. En particulier, dans le cas o\`u la surface $S$ est la sph\`ere $\mathbb{S}^2$ et o\`u l'entier $n$ est \'egal \`a $2$, M. Handel a consid\'erablement affaibli l'hypoth\`ese de proxi\-mit\'e \`a l'identit\'e n\'ecessaire pour assurer l'existence d'un point fixe commun. Si $f_1,f_2$ sont deux hom\'eomorphismes pr\'eservant l'orientation de la sph\`ere $\mathbb{S}^2$ qui commutent, Handel consid\`ere des isotopies $(f_1^t)_{t\in [0,1]}$ et $(f_2^t)_{t\in [0,1]}$ joignant l'identit\'e respectivement \`a $f_1$ et $f_2$, et note $W(f_1,f_2)$ la classe du lacet $t\mapsto [f_1^t,f_2^t]$ dans $\pi_1(\mathrm{Homeo}(\mathbb{S}^2),\mathrm{Id})\simeq \mathbb{Z}/2\mathbb{Z}$. L'invariant $W(f_1,f_2)$ est nul d\`es lors que $f_1$ et $f_2$ sont suffisamment proches de l'identit\'e (en topologie $C^0$). Handel montre alors~:

\begin{thm}[Handel, \cite{Handel1992}]
\label{theo.Handel}
Deux diff\'eomorphismes $f_1,f_2$ de la sph\`ere $\mathbb{S}^2$, isotopes \`a l'identit\'e, qui commutent, et qui satisfont $W(f_1,f_2)=0$, ont un point fixe commun.
\end{thm}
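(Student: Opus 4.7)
\medskip

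\textbf{Plan de preuve.} On raisonne par l'absurde en supposant que $f_1$ et $f_2$ commutent, que $W(f_1,f_2)=0$, mais que $\Fix(f_1)\cap\Fix(f_2)=\emptyset$. Puisque $f_1$ est isotope à l'identité sur $\mathbb{S}^2$, le théorème de Lefschetz entraîne que $\Fix(f_1)\neq\emptyset$ et que la somme des indices des points fixes vaut $\chi(\mathbb{S}^2)=2$. La commutation $f_1\circ f_2=f_2\circ f_1$ implique que $f_2\bigl(\Fix(f_1)\bigr)=\Fix(f_1)$ et, sous l'hypothèse d'absurde, que $f_2$ agit sans point fixe sur $\Fix(f_1)$.

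\textbf{Réduction.} On peut d'abord se ramener au cas où l'action de $f_2$ sur $\Fix(f_1)$ n'a pas d'orbite périodique. En effet, si $\{p_0,\dots,p_{k-1}\}$ est une telle orbite avec $k\geq 2$, on considère l'itéré $f_2^k$, qui commute encore avec $f_1$ et admet les $p_i$ comme points fixes communs avec $f_1$ ; un argument d'indice sur la restriction cyclique de $f_2$ à cette orbite finie, couplé à une variante de Brouwer, permet d'obtenir un authentique point fixe commun à $f_1$ et $f_2$, contradiction.

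\textbf{Utilisation de $W(f_1,f_2)=0$.} On fixe ensuite des isotopies $(f_1^t)_{t\in[0,1]}$ et $(f_2^t)_{t\in[0,1]}$ reliant $\Id$ à $f_1$ et $f_2$. La nullité de $W(f_1,f_2)$ signifie que le lacet des commutateurs $t\mapsto[f_1^t,f_2^t]$ est contractile dans $\Homeo(\mathbb{S}^2)$. Cela permet, pour chaque $p\in\Fix(f_1)$, d'associer un arc canonique à homotopie près reliant $p$ à $f_2(p)$ dans $\mathbb{S}^2\setminus\Fix(f_1)$, et d'obtenir ainsi un système cohérent d'arcs invariant sous l'action de $\mathbb{Z}$ engendrée par $f_2$ : c'est cette cohérence qui serait mise en défaut si $W(f_1,f_2)$ était non trivial.

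\textbf{Conclusion via Nielsen--Thurston.} On applique enfin la classification de Nielsen-Thurston à la classe d'isotopie de $f_1$ relativement à un sous-ensemble fini $f_2$-invariant $E\subset\Fix(f_1)$ bien choisi : la décomposition canonique obtenue doit être préservée par $f_2$ à isotopie près, et les invariants numériques associés (nombres de rotation autour des pointes, indices de Nielsen) sont contraints par le système d'arcs construit à l'étape précédente. La combinaison force l'existence d'un point fixe de $f_2$ dans $E\subset\Fix(f_1)$, ce qui contredit la réduction effectuée.

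\textbf{Obstacle principal.} Le c\oe{}ur technique de la preuve est la traduction de l'invariant cohomologique $W(f_1,f_2)\in\Z/2\Z$ en une donnée géométrique exploitable, c'est-à-dire la construction effective et cohérente des arcs évoqués ci-dessus. Sans cette hypothèse, on dispose de contre-exemples où les deux homéomorphismes commutent sans point fixe commun, de sorte que la trivialité de $W$ est exactement la condition qui autorise le choix compatible d'isotopies, et par suite l'argument topologique final.
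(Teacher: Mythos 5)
Le texte que vous proposez n'est pas une d\'emonstration mais un plan, et ce plan contient des lacunes r\'edhibitoires — notez d'ailleurs que l'article ne d\'emontre pas ce th\'eor\`eme~: il le cite comme r\'esultat de Handel \cite{Handel1992}, de sorte qu'il n'y a pas de preuve interne \`a laquelle se raccrocher, et votre argument doit tenir tout seul. Or il ne tient pas. D'abord, l'\'etape de \og r\'eduction \fg{} est injustifi\'ee et, telle qu'elle est \'ecrite, fausse~: vous affirmez que l'existence d'une orbite p\'eriodique de $f_2$ dans $\Fix(f_1)$, de p\'eriode $k\geq 2$, fournit \`a elle seule (via \og un argument d'indice coupl\'e \`a une variante de Brouwer \fg) un point fixe commun \`a $f_1$ et $f_2$, sans utiliser l'hypoth\`ese $W(f_1,f_2)=0$. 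L'exemple des deux rotations d'angle $\pi$ d'axes orthogonaux (rappel\'e dans l'article juste apr\`es l'\'enonc\'e) contredit exactement cette affirmation~: $\Fix(f_1)$ y est une orbite de p\'eriode $2$ de $f_2$, et il n'y a aucun point fixe commun. Ensuite, votre conclusion est incompatible avec votre propre r\'eduction~: apr\`es vous \^etre ramen\'e au cas o\`u $f_2$ agit sur $\Fix(f_1)$ \emph{sans orbite p\'eriodique}, vous choisissez \og un sous-ensemble fini $f_2$-invariant $E\subset\Fix(f_1)$ \fg{} — un tel ensemble fini invariant est pr\'ecis\'ement une r\'eunion d'orbites p\'eriodiques, donc il n'existe pas dans le cas auquel vous vous \^etes r\'eduit.

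Enfin, les deux \'etapes centrales restent de simples d\'eclarations d'intention~: la construction d'un \og syst\`eme coh\'erent d'arcs \fg{} \`a partir de la contractibilit\'e du lacet $t\mapsto[f_1^t,f_2^t]$ n'est pas effectu\'ee (et $\Fix(f_1)$ peut \^etre un ferm\'e quelconque, \'eventuellement infini et compliqu\'e, de sorte que $\mathbb{S}^2\setminus\Fix(f_1)$ n'est ni connexe ni de type fini en g\'en\'eral — votre usage de Nielsen--Thurston relatif \`a des points marqu\'es suppose implicitement $\Fix(f_1)$ fini, ce qui n'est pas dans les hypoth\`eses), et l'affirmation finale selon laquelle la d\'ecomposition de Thurston \og force l'existence d'un point fixe de $f_2$ dans $E$ \fg{} n'est \'etay\'ee par aucun argument. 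La preuve de Handel repose sur des outils sp\'ecifiques (son th\'eor\`eme de point fixe pour les hom\'eomorphismes du plan/de l'anneau et une analyse fine de la structure de $\Fix(f_1)$), et c'est l\`a que l'hypoth\`ese $W(f_1,f_2)=0$ intervient comme obstruction au rel\`evement~; rien de cela n'appara\^{\i}t de fa\c con exploitable dans votre proposition. En l'\'etat, il s'agit d'un sch\'ema heuristique, pas d'une d\'emonstration.
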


On remarquera que le th\'eor\`eme d'Handel est faux sans l'hypoth\`ese de nullit\'e de l'invariant $W(f_1,f_2)$~:  deux rotations d'angle $\pi$ et d'axes orthogonaux forment une paire de diff\'eomorphismes de $\mathbb{S}^2$ qui commutent mais n'ont pas de point fixe commun. On notera \'egalement que les arguments de Handel s'appliquent \`a des hom\'eomorphismes, si l'on suppose \emph{a priori} que ceux n'ont qu'un nombre fini de points fixes (ce qui, dans le contexte, est une hypoth\`ese tr\`es forte). 

Il a fallu attendre 2007 pour que le r\'esultat de Handel soit g\'en\'eralis\'e \`a un nombre $n$ arbitraire de diff\'eomorphismes~:

\begin{thm}[Franks, Handel, Parwani, \cite{FranksHandelParwani2007}]
\label{theo.FranksHandelParwani-sphere}
Soient $f_1,\dots,f_n$ des $C^1$-diff\'eomorphismes  de la sph\`ere~$\mathbb{S}^2$, isotopes \`a l'identit\'e, qui commutent deux \`a deux, et $G$ le sous-groupe de $\mathrm{Diff}^1(\mathbb{S}^2)$ engendr\'e par ces diff\'eomorphismes. Alors il existe un sous-groupe~$G'$ d'indice~$2$ dans $G$, tel que  les \'el\'ements de $G'$ 
ont un point fixe commun. Si l'invariant $W(f_i,f_j)$ est nul pour $1\leq i,j\leq n$, alors les diff\'eomorphismes $f_1,\dots,f_n$ ont un point fixe commun. 
\end{thm}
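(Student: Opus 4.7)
Je proc\'ederais par r\'ecurrence sur $n$, le cas initial $n=2$ \'etant le th\'eor\`eme \ref{theo.Handel} de Handel: si $W(f_1,f_2)=0$, on prend $G'=G$; sinon, $G'$ est le noyau de l'homomorphisme $G\to\mathbb{Z}/2\mathbb{Z}$ envoyant $f_1^a f_2^b$ sur $a\bmod 2$, de sorte que $G'$ est engendr\'e par $f_1^2$ et $f_2$, lesquels v\'erifient $W(f_1^2,f_2)=2\,W(f_1,f_2)=0$ dans $\mathbb{Z}/2\mathbb{Z}$ et poss\`edent donc un point fixe commun par Handel. Le point clef est ici la bilin\'earit\'e modulo 2 et la sym\'etrie de l'invariant $W$ sur le groupe ab\'elien $G$.

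Pour le pas inductif, supposons la propri\'et\'e acquise pour $n-1$ diff\'eomorphismes et pla\c cons-nous d'abord dans le cas o\`u $W(f_i,f_j)=0$ pour tous $i,j\in\{1,\dots,n-1\}$. L'hypoth\`ese de r\'ecurrence fournit alors $F=\bigcap_{i=1}^{n-1}\Fix(f_i)$ comme un compact non vide, et la commutativit\'e assure que $f_n(F)=F$. Il s'agit de produire un point fixe de $f_n$ dans $F$. Si $F$ poss\`ede un point isol\'e, une moyenne sur l'orbite finie sous $f_n$ le fournit directement. Sinon, $F$ admet des composantes connexes non triviales; si l'une d'elles, disons $C$, est globalement invariante par $f_n$, on applique un th\'eor\`eme de point fixe de type Brouwer-Handel \`a $f_n$ agissant sur l'adh\'erence d'un disque bord\'e par $C$, puis l'on exploite l'annulation des $W(f_i,f_n)$ pour s'assurer que le point fixe obtenu appartient bien \`a $F$.

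Le principal obstacle sera le cas o\`u $f_n$ permute cycliquement les composantes de $F$ sans en fixer aucune, ou bien fixe une composante $C$ avec un nombre de rotation non trivial sur $C$; la premi\`ere situation se produit d\'ej\`a pour les deux rotations orthogonales d'angle $\pi$ mentionn\'ees avant le th\'eor\`eme \ref{theo.Handel}, et elle explique la n\'ecessit\'e de passer \`a un sous-groupe strict. Le passage \`a une puissance $f_n^k$ convenable fournira un sous-groupe d'indice fini, et une analyse combinatoire de l'action de $G$ sur $\pi_0(F)$ et sur les nombres de rotation autour des composantes, coupl\'ee \`a la bilin\'earit\'e de $W$, devrait montrer que cette obstruction se factorise par un quotient de $G$ d'ordre au plus 2, d\'efinissant pr\'ecis\'ement $G'$. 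La difficult\'e technique cruciale sera de garantir que ce $G'$ g\'eom\'etrique co\"incide avec celui d\'ecrit abstraitement par l'annulation des $W(f_i,\cdot)$, et que l'indice n'exc\`ede pas 2; cette compatibilit\'e refl\`ete la nature tr\`es particuli\`ere de $\pi_1(\Homeo(\SS^2),\Id)\simeq\mathbb{Z}/2\mathbb{Z}$.
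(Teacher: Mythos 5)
Ce th\'eor\`eme n'est pas d\'emontr\'e dans l'article~: il y est cit\'e comme r\'esultat de Franks, Handel et Parwani (\cite{FranksHandelParwani2007}), dont la preuve originale repose sur des techniques lourdes (th\'eor\`eme~\ref{theo.FranksHandelParwani-plan}, m\'ethodes d'Handel, ``th\'eorie de Thurston non-compacte''). Votre proposition doit donc \^etre jug\'ee sur pi\`eces, et elle comporte un trou essentiel au c\oe ur du pas de r\'ecurrence. Vous ramenez le probl\`eme \`a trouver un point fixe de $f_n$ \emph{dans} $F=\bigcap_{i=1}^{n-1}\Fix(f_i)$, ce qui est la bonne reformulation, mais c'est pr\'ecis\'ement l\`a toute la difficult\'e~: un ``th\'eor\`eme de point fixe de type Brouwer--Handel'' appliqu\'e \`a $f_n$ sur un disque bord\'e par une composante invariante $C$ de $F$ ne fournit qu'un point fixe de $f_n$ quelque part dans ce disque, sans aucune raison qu'il soit fix\'e par $f_1,\dots,f_{n-1}$, c'est-\`a-dire qu'il appartienne \`a $F$~; l'annulation des $W(f_i,f_n)$ ne r\'epare pas cela (comparez avec le pr\'esent article, dont tout l'objet est justement de localiser des points fixes de $f$ invariants par les \'el\'ements qui commutent, au prix d'hypoth\`eses de mesure invariante et d'un appareil complet de nombres de rotation). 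De plus, $F$ est un compact arbitraire~: $f_n\vert_F$ est un hom\'eomorphisme d'un compact quelconque de $\SS^2$, pour lequel aucun th\'eor\`eme de point fixe n'existe (un hom\'eomorphisme de la sph\`ere peut laisser invariant un cercle et le faire tourner). Votre cas ``point isol\'e'' est \'egalement erron\'e~: l'orbite sous $f_n$ d'un point isol\'e de $F$ n'a aucune raison d'\^etre finie, et ``moyenner'' des points de $\SS^2$ ne produit pas de point fixe.

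Enfin, le cas g\'en\'eral (construction du sous-groupe $G'$ d'indice~$2$ lorsque certains $W(f_i,f_j)$ ne s'annulent pas, action de $f_n$ permutant les composantes de $F$) est laiss\'e \`a une ``analyse combinatoire'' non effectu\'ee, alors que c'est la seconde moiti\'e de l'\'enonc\'e. Les points plus mineurs --- bilin\'earit\'e de $W$ modulo~$2$ (d'o\`u $W(f_1^2,f_2)=0$), bonne d\'efinition du morphisme $f_1^af_2^b\mapsto a\bmod 2$ lorsque $G$ n'est pas libre --- demandent aussi une justification, mais ils sont secondaires par rapport au fait que l'\'etape centrale (point fixe de $f_n$ dans $\bigcap_{i<n}\Fix(f_i)$) n'est pas d\'emontr\'ee et ne peut pas l'\^etre par les arguments esquiss\'es.
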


Dans le cas d'une surface ferm\'ee de caract\'eristique d'Euler strictement n\'egative, la situation est plus simple~:

\begin{thm}[Franks, Handel, Parwani, \cite{FranksHandelParwani2007-2}]
\label{theo.FranksHandelParwani-Euler-negative}
Si $S$ est une surface ferm\'ee de caract\'eristique d'Euler strictement n\'egative, alors des $C^1$-diff\'eomorphismes $f_1,\dots,f_n$ de $S$, isotopes \`a l'identit\'e, qui commutent deux \`a deux, ont un point fixe commun. 
\end{thm}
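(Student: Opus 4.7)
On raisonne par r\'ecurrence sur $n$, le cas $n=1$ r\'esultant du th\'eor\`eme de Lefschetz puisque $L(f_1)=\chi(S)<0$. Pour l'\'etape inductive, la strat\'egie consiste \`a relever la situation au rev\^etement universel $\pi:\mathbb{H}^2\to S$ (ce qui a un sens car $\chi(S)<0$ entra\^ine que $S$ est de genre $\ge 2$) et d'y chercher un point fixe commun aux rel\`evements.

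Pour chaque $f_i$, le choix d'une isotopie $(f_i^t)_{t\in[0,1]}$ entre $\Id$ et $f_i$ d\'etermine un rel\`evement $\tilde{f}_i$ de $f_i$ \`a $\mathbb{H}^2$, obtenu en relevant l'isotopie \`a partir de $\Id_{\mathbb{H}^2}$. Comme $\chi(S)<0$, le groupe $\Homeo_0(S)$ est contractile (Hamstrom), en particulier $\pi_1(\Homeo_0(S))=0$. Il en r\'esulte que les rel\`evements $\tilde{f}_i$ commutent deux \`a deux : les chemins $t\mapsto f_i^t\circ f_j^t$ et $t\mapsto f_j^t\circ f_i^t$ dans $\Homeo_0(S)$, joignant $\Id$ \`a $f_i\circ f_j=f_j\circ f_i$, sont homotopes \`a extr\'emit\'es fix\'ees et leurs rel\`evements ont donc m\^eme extr\'emit\'e, ce qui donne $\tilde{f}_i\circ\tilde{f}_j=\tilde{f}_j\circ\tilde{f}_i$. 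De plus, chaque $\tilde{f}_i$ poss\`ede un point fixe dans $\mathbb{H}^2$ : la th\'eorie de Nielsen appliqu\'ee \`a $f_i$ (isotope \`a $\Id$, de nombre de Lefschetz $\chi(S)\neq 0$) assure l'existence d'une classe de points fixes dont la trace par l'isotopie est contractile dans $S$, et un repr\'esentant d'une telle classe se rel\`eve en point fixe de $\tilde{f}_i$.

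La difficult\'e principale est alors d'\'etablir l'\'enonc\'e suivant : des hom\'eomorphismes commutants $\tilde{g}_1,\dots,\tilde{g}_n$ de $\mathbb{H}^2$, pr\'eservant l'orientation et ayant chacun au moins un point fixe, poss\`edent un point fixe commun. On proc\`ede par une r\'ecurrence interne : en supposant l'ensemble ferm\'e $F:=\Fix(\tilde{g}_1)\cap\cdots\cap\Fix(\tilde{g}_{n-1})$ non vide, il est $\tilde{g}_n$-invariant par commutativit\'e, et il suffit d'y trouver un point fixe de $\tilde{g}_n$. L'obstacle principal vient de l'irr\'egularit\'e possible de $F$ (ferm\'e mais \'eventuellement non connexe, voire dispers\'e), ce qui interdit toute application directe d'un th\'eor\`eme de point fixe classique. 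On contourne cette difficult\'e en exploitant la structure hyperbolique de $\mathbb{H}^2$ et le contr\^ole de l'action de $\tilde{g}_n$ \`a l'infini $\partial\mathbb{H}^2\simeq\mathbb{S}^1$ : un argument de type Brouwer appliqu\'e \`a $\tilde{g}_n$ sur les composantes connexes $\tilde{g}_n$-invariantes de $\mathbb{H}^2\setminus F$, combin\'e \`a l'\'etude des points fixes de $\tilde{g}_n$ sur le cercle \`a l'infini, permet de conclure. Un point fixe commun des $\tilde{f}_i$ dans $\mathbb{H}^2$ se projette alors en un point fixe commun des $f_i$ dans $S$.
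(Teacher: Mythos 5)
Attention d'abord \`a un point de contexte : le pr\'esent article ne d\'emontre pas cet \'enonc\'e, il le cite (Franks--Handel--Parwani, \emph{Fixed points of abelian actions}) ; il n'y a donc pas de ``preuve du papier'' \`a comparer, et votre proposition doit \^etre \'evalu\'ee comme une tentative de d\'emonstration autonome. Vos pr\'eliminaires sont corrects : pour $\chi(S)<0$, la contractibilit\'e de $\mathrm{Homeo}_0(S)$ donne bien des relev\'es canoniques $\tilde f_i$ \`a $\mathbb{H}^2$ qui commutent deux \`a deux, et la th\'eorie de Nielsen--Lefschetz fournit pour chaque $f_i$ une classe de points fixes d'indice non nul, donc des points fixes de $\tilde f_i$.

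Le probl\`eme est que toute la difficult\'e est concentr\'ee dans l'\'enonc\'e que vous admettez ensuite : \emph{des hom\'eomorphismes commutants de $\mathbb{H}^2$, pr\'eservant l'orientation et poss\'edant chacun un point fixe, ont un point fixe commun}. Ce n'est pas un th\'eor\`eme connu ; c'est pr\'ecis\'ement le type de question qui est ouvert pour des hom\'eomorphismes (comparez la question~\ref{q.1} et la discussion du th\'eor\`eme~\ref{theo.FranksHandelParwani-plan} : m\^eme pour des $C^1$-diff\'eomorphismes du plan, l'existence d'un point fixe commun n'est \'etablie que sous des hypoth\`eses suppl\'ementaires -- ensemble compact invariant -- et au prix de la ``th\'eorie de Thurston non-compacte''). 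Votre r\'ecurrence interne ach\`eve de masquer le trou : le ferm\'e $F=\mathrm{Fix}(\tilde g_1)\cap\dots\cap\mathrm{Fix}(\tilde g_{n-1})$ est certes $\tilde g_n$-invariant, mais il est en g\'en\'eral non compact et tr\`es irr\'egulier, et ni le th\'eor\`eme de Brouwer (qui ne donne rien pour un ferm\'e invariant non compact), ni l'action sur le cercle \`a l'infini ne forcent $\tilde g_n$ \`a fixer un point \emph{dans} $F$ : les points fixes de $\tilde g_n$ peuvent tr\`es bien \'eviter $F$. L'expression ``un argument de type Brouwer \ldots{} permet de conclure'' recouvre donc exactement le c\oe ur de la d\'emonstration, qui manque. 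Signal r\'ev\'elateur : votre r\'eduction n'utilise plus du tout l'hypoth\`ese $C^1$, alors qu'elle est essentielle dans le r\'esultat cit\'e (l'\'enonc\'e est ouvert pour de simples hom\'eomorphismes) ; une preuve qui n'en fait aucun usage d\'emontrerait strictement plus que ce qui est connu, ce qui doit vous alerter. La d\'emonstration de Franks--Handel--Parwani ne suit d'ailleurs pas ce sch\'ema : elle repose sur la classification de Thurston adapt\'ee aux surfaces non compactes (obtenues en \^otant des ensembles de points fixes), le th\'eor\`eme de point fixe de Handel et des arguments de r\'ecurrence sur le groupe ab\'elien, o\`u la r\'egularit\'e $C^1$ intervient de fa\c con essentielle.
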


\subsection{Un r\'esultat ``\`a la Brouwer"}

Le th\'eor\`eme des translations planes de Brouwer affirme que ``la dynamique d'un hom\'eomorphisme de $\mathbb{R}^2$, qui pr\'eserve l'orientation et n'a pas de point fixe, ressemble localement \`a celle d'une translation" (voir \cite{Brouwer1912} ou~\cite{Guillou1994}). Par exemple, si $f$ est un hom\'eomorphisme de $\mathbb{R}^2$, qui pr\'eserve l'orientation et n'a pas de point fixe, alors l'orbite de tout point sous l'action de $f$ tend vers l'infini. Un des outils principaux des preuves des th\'eor\`emes~\ref{theo.FranksHandelParwani-sphere} et~\ref{theo.FranksHandelParwani-Euler-negative}  est la g\'en\'eralisation partielle du th\'eor\`eme de Brouwer \`a l'action de $n$ diff\'eomorphismes qui commutent deux \`a deux~: 

\begin{thm}[Franks, Handel, Parwani, \cite{FranksHandelParwani2007}]
\label{theo.FranksHandelParwani-plan}
Soient $f_1,\dots,f_n$ des $C^1$-diff\'eo\-mor\-phis\-mes pr\'eservant l'orientation du plan $\RR^2$ qui commutent deux \`a deux. Si ces diff\'eomorphismes laissent invariant un m\^eme sous-ensemble compact non-vide du plan, alors ils ont un point fixe commun.
\end{thm}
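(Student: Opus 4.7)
Nous proposons d'\'etablir l'\'enonc\'e par r\'ecurrence sur $n$. Pour $n=1$, il suffit d'invoquer le th\'eor\`eme des translations planes de Brouwer~: tout point $x\in K$ a une orbite $\{f_1^k(x)\}_{k\in\Z}$ contenue dans le compact $K$, donc born\'ee, ce qui oblige $f_1$ \`a poss\'eder un point fixe (sans quoi toute orbite tendrait vers l'infini).

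Pour le pas inductif, on suppose le r\'esultat \'etabli pour $n-1$ diff\'eomorphismes. Appliqu\'ee \`a $f_1,\dots,f_{n-1}$ avec le m\^eme compact invariant $K$, l'hypoth\`ese de r\'ecurrence garantit que l'ensemble
\[
F\;=\;\Fix(f_1)\cap\dots\cap\Fix(f_{n-1})
\]
est non vide~; il est ferm\'e, et, puisque $f_n$ commute avec chaque $f_i$, il est pr\'eserv\'e globalement par $f_n$. Il reste donc \`a montrer que $f_n$ admet un point fixe dans $F$, ce qui fournira le point fixe commun cherch\'e aux $n$ diff\'eomorphismes.

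L'obstruction naturelle \`a une application directe du cas $n=1$ est que $F\cap K$ peut \^etre vide, et qu'on ne dispose pas \emph{a priori} d'un compact $f_n$-invariant inclus dans $F$. L'id\'ee est alors d'analyser la topologie de $F$~: l'hom\'eomorphisme $f_n$ permute les composantes connexes de $F$, et, quitte \`a remplacer $K$ par un compact minimal pour l'action du groupe engendr\'e par $f_1,\dots,f_n$ (par un argument de Zorn), on cherche \`a exhiber une composante connexe $C$ de $F$ qui soit globalement $f_n$-invariante et qui ``pi\`ege'' dynamiquement un compact invariant, par exemple via la structure du compl\'ementaire $\R^2\setminus F$ au voisinage de $K$. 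Sur une telle composante, un raffinement de la th\'eorie de Brouwer --- typiquement l'utilisation d'un feuilletage de Brouwer associ\'e \`a $f_n$, ou d'une famille maximale d'arcs de translation non li\'es --- doit fournir un point fixe de $f_n$ dans $C\subset F$.

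Le verrou technique se situe pr\'ecis\'ement \`a cette derni\`ere \'etape~: la composante $C$ peut \^etre topologiquement compliqu\'ee (non simplement connexe, voire non localement connexe), et la ``restriction'' de $f_n$ \`a $C$ n'est pas un hom\'eomorphisme de surface au sens classique. Il faudra donc travailler avec $f_n$ sur $\R^2$ tout entier, en exploitant conjointement l'invariance de $K$, la topologie du compl\'ementaire $\R^2\setminus F$ et, par l'absurde, l'hypoth\`ese que $f_n$ n'a pas de point fixe dans $F$. C'est cette combinaison d'un argument dynamique global \`a la Brouwer avec une analyse topologique fine des composantes connexes du lieu des points fixes communs aux $f_i$ qui constitue, \`a notre sens, le c\oe ur de la preuve.
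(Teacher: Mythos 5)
Le th\'eor\`eme~\ref{theo.FranksHandelParwani-plan} n'est pas d\'emontr\'e dans l'article : il y est cit\'e comme un r\'esultat de Franks, Handel et Parwani~\cite{FranksHandelParwani2007} et utilis\'e comme une bo\^{\i}te noire. Votre texte n'en constitue pas non plus une d\'emonstration. Le cas $n=1$ et la mise en place de la r\'ecurrence (l'ensemble $F=\Fix(f_1)\cap\dots\cap\Fix(f_{n-1})$ est non vide, ferm\'e et globalement pr\'eserv\'e par $f_n$) sont corrects, mais toute la difficult\'e du th\'eor\`eme est concentr\'ee dans l'\'etape que vous laissez ouverte : montrer que $f_n$ fixe un point de $F$. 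Vous le reconnaissez vous-m\^eme (\og le verrou technique \fg), et les pistes \'evoqu\'ees --- compact minimal par Zorn, composante connexe $C$ de $F$ invariante qui \og pi\`ege \fg{} un compact, un raffinement de la th\'eorie de Brouwer qui \emph{doit} fournir un point fixe --- ne sont pas des arguments : rien ne garantit l'existence d'une telle composante invariante, $F\cap K$ peut \^etre vide, le compact minimal pour le groupe n'a aucune raison d'interagir avec $F$, et aucun \'enonc\'e pr\'ecis n'est propos\'e pour conclure sur une composante \'eventuellement non localement connexe. En l'\'etat, la r\'ecurrence r\'eduit l'\'enonc\'e \`a un probl\`eme au moins aussi difficile que l'\'enonc\'e lui-m\^eme.

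Un signal suppl\'ementaire que l'approche est incompl\`ete : votre sch\'ema n'utilise nulle part l'hypoth\`ese $C^1$. Or, comme le rappelle l'article (section \og Questions et probl\`emes \fg, point~1), la validit\'e de ce type d'\'enonc\'e pour de simples hom\'eomorphismes est une question ouverte ; une preuve purement topologique \og \`a la Brouwer \fg{} d\'emontrerait donc un r\'esultat que personne ne sait \'etablir. La d\'emonstration de Franks, Handel et Parwani suit bien une strat\'egie globale de r\'ecurrence, mais le c\oe ur de leur travail est pr\'ecis\'ement l'\'etape que vous sautez, et il repose de fa\c con essentielle sur la diff\'erentiabilit\'e et sur des techniques de \og th\'eorie de Thurston non-compacte \fg{} (classification des classes d'isotopie sur le compl\'ementaire de l'ensemble des points fixes communs, formes normales), ce qui est pr\'ecis\'ement la raison pour laquelle l'article cite ce th\'eor\`eme au lieu de le red\'emontrer. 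Il vous faudrait donc soit reprendre ces techniques, soit produire un argument nouveau et complet pour l'\'etape inductive, en expliquant o\`u l'hypoth\`ese $C^1$ intervient.
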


Le th\'eor\`eme des translations planes de Brouwer implique que tout hom\'eomorphisme du plan $\mathbb{R}^2$ qui pr\'eserve l'orientation et laisse invariante une mesure de probabilit\'e admet un point fixe. Ce fait et le th\'eor\`eme~\ref{theo.FranksHandelParwani-plan} soul\`event naturellement la question suivante~:

\begin{quest}
\label{q.1}
Des hom\'eomorphismes $f_1,\dots,f_n$ du plan $\mathbb{R}^2$ qui pr\'eservent l'orientation, commutent deux \`a deux, et laissent invariante une mesure de probabilit\'e, ont-ils un point fixe commun~?
\end{quest}

Le but de notre article est de r\'epondre positivement \`a la question~\ref{q.1}, lorsque le comportement \`a l'infini des hom\'eomorphismes consid\'er\'es est ``suffisamment sympathique". On en d\'eduira des r\'esultats d'existence de deux points fixes communs pour des $C^1$-diff\'eomorphismes de la sph\`ere qui commutent deux \`a deux, et laissent invariante une mesure de probabilit\'e dont le support n'est pas r\'eduit \`a un point. 

\subsection{\'Enonc\'e de nos r\'esultats}  
\label{ss.enonce-resultats}
Nous notons $\mathrm{Homeo}(\mathbb{R}^2)$ le groupe des hom\'eomorphismes de~$\mathbb{R}^2$, muni de la topologie  de la convergence uniforme sur les compacts, et $\mathrm{Homeo}^+(\mathbb{R}^2)$ le sous-groupe  constitu\'e des hom\'eomorphismes pr\'eservant l'orientation. D'apr\`es le th\'eor\`eme de Kneser, $\mathrm{Homeo}^+(\RR^2)$ se retracte sur $\mathrm{SO}(2)$ (voir~\cite{Kneser1926} ou~\cite[th\'eor\`eme 2.9]{LeRoux2001}). Ainsi, tout \'el\'ement $f$ de $\mathrm{Homeo}^+(\RR^2)$ est isotope \`a l'identit\'e dans $\mathrm{Homeo}^+(\RR^2)$. De plus, si $I=(f_t)_{t\in[0,1]}$ est une isotopie de l'identit\'e \`a~$f$, alors toute autre isotopie est homotope \`a $R^kI$ pour un certain $k\in\mathbb{Z}$, o\`u $R$ d\'esigne le lacet dans $\mathrm{SO}(2)$ form\'e des rotations vectorielles d'angle $2\pi t$ pour $t$ allant de $0$ \`a $1$. Pour tout hom\'eomorphisme $f$, nous notons $\mathrm{Fix}(f)$ l'ensemble de ses points fixes.  Nous noterons $d\theta=\frac{1}{2\pi}\, \frac {-ydx+xdy}{x^2+y^2}$ la 1-forme d'angle polaire usuelle sur $\mathbb{R}^2\setminus \{0\}$. 

Soit $f$ un hom\'eomorphisme du plan $\mathbb{R}^2$ pr\'eservant l'orientation, et $I=(f_t)_{t\in[0,1]}$ une isotopie joignant l'identit\'e \`a $f$. Pour tout point $z$ dans $\RR^2$, notons $\gamma_{I,z}:[0,1]\to\RR^2$ le chemin donn\'e par $\gamma_{I,z}(t)=f_t(z)$. Il existe un voisinage $W$ de l'infini dans $\mathbb{R}^2$ tel que, pour $z\in W$, le chemin 
$\gamma_{I,z}$ \'evite l'origine. On peut alors d\'efinir une application
$$\begin{array}{crcl}
\mathrm{Tourne}_I~: &W&\longrightarrow&\RR\\ 
& z&\longmapsto &\int_{\gamma_{I,z}} d\theta 
\end{array}.$$
Si $z$ et $z'$ sont deux points distincts dans $\RR^2$, notons $\gamma_{I,z,z'}:[0,1]\to\RR^2$ le chemin donn\'e par $\gamma_{I,z,z'}(t)=f_t(z)-f_t(z')$. Si $\Delta$ d\'esigne la diagonale de $\RR^2\times\RR^2$, on peut d\'efinir une application
$$ \begin{array}{crcl}
 \mathrm{Enlace}_I~: & (\RR^2\times\RR^2)\setminus\Delta & \longrightarrow & \RR\\ 
& (z,z') & \longmapsto & \int_{\gamma_{I,z,z'}} d\theta
 \end{array}.$$
 La fonction $\mathrm{Tourne}_I$ prend des valeurs enti\`eres sur les points fixes~: si $z\in\mathrm{Fix}(f)$, la quantit\'e $\mathrm{Tourne}_I(z)$ est le nombre alg\'ebrique de tours que fait le point $f_t(z)$ autour de $0$ quand $t$ varie de $0$ \`a $1$. De m\^eme,  la fonction $\mathrm{Enlace}_I$ prend des valeurs enti\`eres sur les couples de points fixes distincts~: si $z,z'\in\mathrm{Fix}(f)$, la quantit\'e $\mathrm{Enlace}_I(z,z')$ est le nombre alg\'ebrique de tours que fait le segment joignant $f_t(z')$ \`a $f_t(z)$ quand $t$ varie de $0$ \`a $1$. Observons \'egalement que la fonction $\mathrm{Enlace}_I$ et le germe en l'infini de la fonction $\mathrm{Tourne}_I$ ne d\'ependent que de la classe d'homotopie de l'isotopie $I$. De plus, si $I'$ est une autre isotopie joignant l'identit\'e \`a $f$, homotope \`a $R^kI$, alors on a 
\begin{eqnarray}
\label{e.change-isotopie-1} \mathrm{Enlace}_{I'} & = & \mathrm{Enlace}_{I}+k \quad \mbox{sur }(\RR^2\times\RR^2)\setminus\Delta\\ 
\label{e.change-isotopie-2} \mathrm{Tourne}_{I'} & = & \mathrm{Tourne}_{I}+k\quad \mbox{sur un voisinage $W$ de l'infini.}
\end{eqnarray}

\bigskip

Nous introduisons maintenant deux propri\'et\'es qui joueront un r\^ole crucial dans notre article~:

\medskip

\noindent \textbf{(P1)} La fonction $\mathrm{Enlace}_I$ est born\'ee sur $\mathrm{Fix}(f)\times\mathrm{Fix}(f)\setminus\Delta$.

\medskip

\noindent \textbf{(P2)} Il existe un voisinage $W$ de l'infini tel que la fonction $\mathrm{Tourne}_I$ est constante sur $\mathrm{Fix}(f)\cap W$. 

\medskip

Les formules~\eqref{e.change-isotopie-1} et~\eqref{e.change-isotopie-2} ci-dessus montrent que les propri\'et\'e~(P1) et~(P2) ne d\'ependent pas de l'isotopie~$I$ choisie, mais uniquement de l'hom\'eomorphisme $f$. Ces deux propri\'et\'es sont \'evidemment v\'erifi\'ees si l'ensemble des points fixes de $f$ est fini~; il est beaucoup plus int\'eressant de noter qu'elles le sont aussi si $f$ est ``de classe $C^1$ \`a l'infini"~:

\begin{prop}
\label{p.extension-C1}
S'il existe une structure diff\'erentielle sur $\mathbb{S}^2=\mathbb{R}^2\sqcup\{\infty\}$ tel que $f$ s'\'etend en un $C^1$-diff\'eomorphisme de $\mathbb{S}^2$, alors les propri\'et\'es (P1) et (P2) sont v\'erifi\'ees.
\end{prop}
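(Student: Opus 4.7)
Ma strat\'egie est de ramener les deux propri\'et\'es \`a une analyse locale au voisinage de $\infty$, via la $C^1$-lin\'earisation. Comme les propri\'et\'es (P1) et (P2) ne d\'ependent pas du choix de l'isotopie (voir \eqref{e.change-isotopie-1}, \eqref{e.change-isotopie-2}), je choisirais l'isotopie la plus commode. Par connexit\'e par arcs du stabilisateur de $\infty$ dans $\Diff^1(\SS^2)$, on peut prendre une isotopie $\tilde I = (\tilde f_t)_{t\in[0,1]}$ joignant $\Id$ \`a $\tilde f$ dans $\Diff^1(\SS^2)$ qui fixe $\infty$ pour tout $t$, et noter $I$ sa restriction \`a $\RR^2$. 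Dans la carte $\phi(z)=1/z$ au voisinage de $\infty$, le conjugu\'e $g_t = \phi\circ\tilde f_t\circ\phi^{-1}$ est $C^1$ en $0$ avec $g_t(0)=0$, et sa diff\'erentielle $A_t = Dg_t(0)$ d\'efinit un chemin continu dans $GL_2^+(\RR)$ de $\Id$ \`a $A_1 = Dg_1(0)$.

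Pour \textbf{d\'emontrer (P2)}, je proc\'ederais comme suit. Pour $w_0$ point fixe de $g_1$ pr\`es de $0$, le d\'eveloppement limit\'e $g_t(w_0) = A_t w_0 + r(w_0,t)$ avec $|r(w_0,t)|/|w_0|\to 0$ uniform\'ement en $t$ montre que la courbe normalis\'ee $t\mapsto g_t(w_0)/|w_0|$ est uniform\'ement proche de $t\mapsto A_t u_0$ o\`u $u_0=w_0/|w_0|$. Si $A_1-\Id$ est inversible, il n'y a pas de point fixe de $g_1$ non nul pr\`es de $0$, donc aucun point fixe de $f$ pr\`es de $\infty$, et (P2) est triviale. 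Sinon, l'\'equation $A_1 w_0 - w_0 = o(|w_0|)$ force $u_0$ \`a s'accumuler sur $\ker(A_1-\Id)\cap\SS^1$. Or le nombre de tours autour de $0$ du chemin $t\mapsto A_t u$, fonction continue \`a valeurs r\'eelles de $u\in\SS^1$, est entier lorsque $A_1 u = u$, donc constant sur chaque composante connexe de $\ker(A_1-\Id)\cap\SS^1$. Dans le cas o\`u ce noyau est de dimension $1$, on note en outre que $A_t(-v)=-A_t v$ a m\^eme enroulement autour de $0$ que $A_t v$, de sorte qu'il existe bien une unique valeur enti\`ere $W_0$. Pour $|w_0|$ assez petit, l'enroulement effectif de $g_t(w_0)$ co\"{\i}ncide avec $W_0$, d'o\`u (P2) pour $\mathrm{Tourne}_I$ apr\`es retour \`a la coordonn\'ee $z$.

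La \textbf{preuve de (P1)} se ferait par l'absurde: si $|\mathrm{Enlace}_I(z_n,z_n')|\to\infty$ pour une suite dans $\Fix(f)^2\setminus\Delta$, la compacit\'e de $\Fix(\tilde f) = \Fix(f)\cup\{\infty\}$ dans $\SS^2$ permet d'extraire $(z_n, z_n')\to (z_*, z_*')\in \Fix(\tilde f)^2$. Quatre cas se pr\'esentent. (a) Si $z_*\ne z_*'$ sont finis, la continuit\'e de $\mathrm{Enlace}_I$ en $(z_*,z_*')$ contredit l'hypoth\`ese. (b) Si seul $z_*'=\infty$, alors $|f_t(z_n')|$ devient uniform\'ement grand alors que $f_t(z_n)$ reste born\'e, donc $f_t(z_n)-f_t(z_n')\sim -f_t(z_n')$ uniform\'ement, et l'enroulement est asymptotique \`a $\mathrm{Tourne}_I(z_n')$, constant par (P2). (c) Si $z_*=z_*'\in\RR^2$, la lin\'earisation $f_t(z_n)-f_t(z_n') = B_t(z_n-z_n')+o(|z_n-z_n'|)$ avec $B_t = D\tilde f_t(z_*)$ permet de rejouer l'argument de (P2), la direction limite $(z_n-z_n')/|z_n-z_n'|$ appartenant \`a $\ker(B_1-\Id)$. (d) Si $z_*=z_*'=\infty$, on utilise dans la carte \`a l'infini la formule complexe $f_t(z_n)-f_t(z_n') = (w_n'(t)-w_n(t))/(w_n(t)w_n'(t))$, qui d\'ecompose additivement le nombre d'enroulement en trois contributions, chacune born\'ee par l'analyse de (P2) appliqu\'ee \`a $A_t$.

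La \textbf{principale difficult\'e} se situe dans le cas (d): il faut ma\^{\i}triser le nombre d'enroulement du vecteur $w_n'(t)-w_n(t)$ alors que $w_n$ et $w_n'$ tendent tous deux vers $0$, \'eventuellement avec $|w_n'-w_n|\ll |w_n|$. L'ingr\'edient d\'ecisif est l'observation que $w_n$ et $w_n'$ \'etant des points fixes de $g_1$ accumulant en $0$, leur diff\'erence normalis\'ee est contrainte \`a $o(1)$ pr\`es \`a appartenir \`a $\ker(A_1-\Id)$, ce qui autorise la transposition de l'argument directionnel de (P2) --- la m\^eme observation gouverne le cas (c). Le passage des estimations r\'eelles (par lin\'earisation) aux valeurs enti\`eres (nombres d'enroulement) se fait enfin gr\^ace \`a l'uniformit\'e en $t$ du contr\^ole d'erreur et \`a la minoration uniforme $|A_t u|\ge c>0$ pour $u\in\SS^1$.
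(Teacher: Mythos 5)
Your proposal is correct in its essentials, but it takes a genuinely different route from the paper's. The paper never linearizes along an isotopy: for (P2) it uses only the differentiability of the single map $f$ at $\infty$ to compactify $\RR^2$ by a circle $\Sigma$ at infinity, extends $f\vert_{A_{z'}}$ and its lift to $\widetilde A_{z'}\sqcup\widetilde\Sigma$, and obtains the constancy of $\mathrm{Tourne}_I$ on $\mathrm{Fix}(f)$ near $\infty$ from the rigidity of lifts of circle homeomorphisms with fixed points (all displaced by the same deck power); for (P1) it notes that $\mathrm{Enlace}_I$ is locally constant on pairs of distinct fixed points, reduces by compactness to boundedness when one point is near $\infty$ and when both are near a common fixed point $z''$, gets the first statement as a special case of l'assertion~(4) de la proposition~\ref{theo.main-2} (dont la preuve repose sur le lemme~\ref{lem.propriete-diffeo-C1} et un argument de type Brouwer dans le rev\^etement de l'anneau), and repeats the argument at $z''$. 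You instead prove both properties directly by winding-number estimates after linearizing the whole isotopy at the accumulation point (finite or $\infty$), the key point being that differences of nearby fixed points align, up to $o(1)$, with $\ker(A_1-\Id)$ and that $t\mapsto A_tu$ has a single integer winding on that kernel; your four-case compactness argument on $\SS^2$, including the decomposition $f_t(z)-f_t(z')=(w'(t)-w(t))/(w(t)w'(t))$ in the chart at infinity, does exhaust all degenerations, and the uniform-in-$t$ first-order estimates and the passage from real estimates to integer windings that you invoke are standard. What your approach buys is self-containedness (no appeal to proposition~\ref{theo.main-2}, hence no Franks-type machinery hidden in the background); what it costs is the additional input you assert without proof, namely the existence of an isotopy from $\Id$ to the extension inside $\Diff^1(\SS^2)$, fixing $\infty$ and continuous for the $C^1$ topology --- this is true (path-connectedness of the stabilizer of $\infty$, resting on $\pi_0\,\Diff^1_+(\SS^2)=0$, a Smale-type fact), but it is precisely what the paper avoids by working with a purely topological adapted isotopy and the differentiability of $f$ alone; if you keep your route, this existence statement and the uniformity of the error terms are the two points to write out in full.
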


Dans l'appendice, nous d\'ecrirons quelques exemples d'hom\'eomorphismes qui satisfont, ou ne satisfont pas, (P1) et/ou (P2). Nous donnerons \'egalement un exemple de $C^1$-diff\'eomorphisme du plan qui satisfait (P1) et (P2), mais ne s'\'etend pas en un $C^1$-diff\'eomorphisme de la sph\`ere. 

\bigskip

Nous sommes maintenant en mesure d'\'enoncer notre r\'esultat principal~:

\begin{thm}
\label{theo.main}
Soit $f\in \mathrm{Homeo}^+(\RR^2)$ un hom\'eomorphisme du plan qui pr\'eserve l'orientation. On suppose que $f$ v\'erifie les propri\'et\'es (P1) et (P2), et laisse invariante une mesure de masse finie $\mu$ dont le support n'est pas contenu dans $\mathrm{Fix}(f)$. Alors~:

\smallskip

\noindent \textbf{1 -} il existe une partie compacte non-vide de $\mathrm{Fix}(f)$ qui est invariante par tout \'el\'ement de $\mathrm{Homeo}(\RR^2)$ qui commute avec $f$ et qui laisse $\mu$ invariante~;

 \smallskip 

 \noindent \textbf{2 -}  si $f$ s'\'etend en un $C^1$-diff\'eomorphisme de $\mathbb{S}^2=\mathbb{R}^2\sqcup\{\infty\}$, il existe une partie compacte non-vide de $\mathrm{Fix}(f)$ qui est invariante par tout  \'el\'ement de $\mathrm{Homeo}(\RR^2)$ qui commute avec $f$.
\end{thm}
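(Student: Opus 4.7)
The plan is to construct a canonical non-empty compact subset $K \subset \Fix(f)$ from the invariants of the introduction (and, in part~1, from $\mu$), such that every $g \in \Homeo(\RR^2)$ commuting with $f$ (and preserving $\mu$, in part~1) preserves $K$ setwise. Invariance will be essentially tautological from the definition of $K$, and the substance of the theorem will lie in proving $K$ non-empty and compact.

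\textbf{Transformation of $\mathrm{Enlace}_I$ under a commuting $g$.} If $g$ commutes with $f$, the conjugate isotopy $I' := (g f_t g^{-1})_{t \in [0,1]}$ joins $\Id$ to $f$, hence is homotopic to $R^{k(g)} I$ for a unique $k(g) \in \ZZ$. Using \eqref{e.change-isotopie-1} and the fact that the two-parameter family $(s,t) \mapsto g_s f_t(z) - g_s f_t(z')$ avoids $0$ and is, for every $s$, a loop at $g_s(z) - g_s(z')$ when $z, z' \in \Fix(f)$, a homotopy argument yields
\[
\mathrm{Enlace}_I(g(z), g(z')) \;=\; \mathrm{Enlace}_I(z, z') - k(g) \qquad \text{for every } (z,z') \in \Fix(f)^2 \setminus \Delta.
\]
Since $\mathrm{Enlace}_I$ is integer-valued and bounded on $\Fix(f)^2 \setminus \Delta$ (by (P1)), its supremum $M \in \ZZ$ is attained; since $g \times g$ permutes $\Fix(f)^2 \setminus \Delta$, one deduces $M = M - k(g)$, forcing $k(g) = 0$ whenever $|\Fix(f)| \geq 2$ (the case $|\Fix(f)| \leq 1$ being trivial). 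Define
\[
K \;:=\; \bigl\{ z \in \Fix(f) \,:\, \exists\, z' \in \Fix(f) \setminus \{z\},\ \mathrm{Enlace}_I(z, z') = M \bigr\},
\]
intersected with $\mathrm{supp}(\mu)$ in part~1. The transformation rule then gives $g(K) = K$, and compactness of $K$ should follow from (P2) together with the asymptotic behaviour of $\mathrm{Enlace}_I$ when one of its two arguments tends to infinity (in that regime it is essentially prescribed by $\mathrm{Tourne}_I$ at the other argument and by the constant $n_\infty$).

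\textbf{Non-emptiness.} The non-emptiness of $K$ reduces to the existence of at least two fixed points of $f$, plus a check that the sup is attained in the right region. For part~1, the classical Brouwer translation theorem applied to $(f, \mu)$ furnishes one fixed point; then a refined Brouwer-translation-type argument for homeomorphisms preserving an invariant probability measure --- exploiting $\mathrm{supp}(\mu) \not\subset \Fix(f)$ together with (P1) and (P2) --- produces the further fixed point(s) needed to realize the sup $M$ inside $\mathrm{supp}(\mu)$. For part~2, Proposition~\ref{p.extension-C1} delivers (P1) and (P2) for free, and the $C^1$-extension $\bar f$ to $\SS^2$ has $\infty$ as an isolated fixed point with a well-defined local rotation index; the $\mu$-argument above still provides at least one fixed point of $f$ in $\RR^2$, and the resulting $K$ --- whose definition uses only topological invariants attached to $f$ --- is invariant under every $g \in \Homeo(\RR^2)$ commuting with $f$, without any assumption that $g$ preserves $\mu$.

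\textbf{Main obstacle.} The decisive difficulty is the non-emptiness of $K$ in part~1: one must realize the extremal value $M$ of $\mathrm{Enlace}_I$ at a pair of fixed points belonging to $\mathrm{supp}(\mu)$. This requires a measure-theoretic refinement of Brouwer's translation theorem, producing fixed points with controlled linking behaviour with respect to one another, and I expect this to be the main technical innovation of the paper.
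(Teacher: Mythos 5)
There is a genuine gap, and it lies exactly where you locate the ``main obstacle'' --- but the obstacle cannot be overcome along the route you sketch, because the invariant you build $K$ from is the wrong one. First, compactness of your $K$ fails: with an adapted isotopy it can perfectly well happen that $\mathrm{Enlace}_I$ vanishes identically on $\Fix(f)\times\Fix(f)\setminus\Delta$ while $\Fix(f)$ is unbounded (this is precisely the situation of Examples~4 and~6 of the appendix, where moreover $f$ preserves finite measures whose support is not contained in $\Fix(f)$); then $M=0$ and $K=\Fix(f)$, which is not compact. Property~(P2) controls $\mathrm{Tourne}_I$ on fixed points near infinity, not the level sets of $\mathrm{Enlace}_I$, so no asymptotic argument rescues this. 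Second, intersecting with $\mathrm{supp}(\mu)$ in part~1 destroys non-emptiness: the hypothesis is only $\mathrm{supp}(\mu)\not\subset\Fix(f)$, and $\mathrm{supp}(\mu)$ may contain no fixed point at all (rotate a disc about its centre and let $\mu$ be Lebesgue measure on an invariant annulus avoiding the centre), so $K\cap\mathrm{supp}(\mu)$ can be empty even when $K$ behaves well. Third, in part~2 the $C^1$ extension must enter in an essential, quantitative way: by Remark~\ref{r.contre-exemple-pas-C1} (Example~4), there is a $C^\infty$ diffeomorphism of $\RR^2$ satisfying (P1) and (P2), preserving a finite measure with support not in $\Fix(f)$, and commuting with a translation; hence no set defined purely from the topological data you use can be simultaneously non-empty, compact and invariant under all commuting homeomorphisms. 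The ``local rotation index at $\infty$'' you invoke is not an argument.

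The missing idea --- the heart of the paper --- is to work not with linking numbers of pairs of fixed points, but with the rotation numbers $\rho_{z'}(z)$ of $\mu$-recurrent orbits $z$ (typically \emph{not} fixed) around fixed points $z'$. Proposition~\ref{theo.main-2} establishes: equivariance of $\rho$ under commuting homeomorphisms (assertion~(1)); $\mu$-a.e.\ existence via Franks' lemma~\ref{theo.lemme-de-Franks} and Birkhoff, using (P1) (assertion~(2)); the decay $\mu(\{\vert\rho_{z'}\vert\geq a\})\to 0$ as $z'\to\infty$ in $\Fix(f)$, using (P2) and dominated convergence (assertion~(3)); a uniform bound $\vert\rho_{z'}\vert<a$ for $z'$ near $\infty$ under the $C^1$ extension, proved by the genuinely differential Lemma~\ref{lem.propriete-diffeo-C1} about images of great-circle arcs at $\infty$ (assertion~(4)); and a Viterbo-type non-vanishing statement, proved with the Le Calvez--Jaulent transverse foliation (Theorem~\ref{theo.Brouwer-feuillete-equivariant}): there exists $z'\in\Fix(f)$ with $\mu(\{\rho_{z'}\neq 0\})>0$ (assertion~(5)). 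The compact invariant sets are then the closures of $X_a=\{z'\in\Fix(f):\mu(\{\vert\rho_{z'}\vert\geq a\})\geq a\}$ for part~1 and $Y_a=\{z'\in\Fix(f):\exists\, z\in\mathrm{Rec}^+(f),\ \vert\rho_{z'}(z)\vert\geq a\}$ for part~2, for $a$ small, after disposing of the trivial case where $\Fix(f)$ is compact. None of assertions~(3), (4), (5) --- in particular the measure-theoretic non-vanishing statement, which you rightly sense is the key difficulty --- is supplied by your proposal, and the statement you would need (``the extremal value of $\mathrm{Enlace}_I$ is realized at a pair of fixed points in $\mathrm{supp}(\mu)$'') is simply not true in general.
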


\begin{remark}
\label{r.contre-exemple-pas-C1}
Dans l'appendice, nous d\'ecrirons un $C^\infty$-diff\'eomorphisme $f$ du plan $\RR^2$, qui satisfait les propri\'et\'es (P1) et (P2) (mais ne s'\'etend pas en un $C^1$-diff\'eomorphisme de $\mathbb{S}^2$), qui laisse invariante une mesure de masse finie dont le support n'est pas contenu dans $\mathrm{Fix}(f)$, et qui commute avec la translation $(x,y)\mapsto (x+1,y)$. En particulier, si $A$ est une partie non-vide de $\mathrm{Fix}(f)$ qui est invariante par tout \'el\'ement de $\mathrm{Homeo}(\RR^2)$ qui commute avec $f$, alors $A$ n'est pas compacte. Cet exemple montre que l'hypoth\`ese ``$f$ s'\'etend en un $C^1$-diff\'eomorphisme de $\mathbb{S}^2$" est n\'ecessaire dans l'assertion~2 du th\'eor\`eme~\ref{theo.main}. 
\end{remark}

La combinaison de ce th\'eor\`eme avec ceux de Franks, Handel et Parwani pr\'ec\'edemment cit\'es fournit des nouveaux r\'esultats d'existence de points fixes communs. Voici tout d'abord un r\'esultat concernant les $n$-uplets de $C^1$-diff\'eomorphismes du plan~:

\begin{coro}
\label{coro.1}
Soient $f_1,\dots,f_n$ des $C^1$-diff\'eomorphismes du plan $\mathbb{R}^2$, qui pr\'eservent l'orientation, et commutent deux \`a deux. Si ces diff\'eomorphismes satisfont les propri\'et\'es (P1) et (P2), et s'ils laissent invariante une mesure de probabilit\'e, alors ils ont un point fixe commun.
\end{coro}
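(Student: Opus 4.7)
The plan is to combine the first assertion of Theorem~\ref{theo.main} with the plane version of Franks--Handel--Parwani (Theorem~\ref{theo.FranksHandelParwani-plan}): the main theorem, applied to a well-chosen $f_i$, produces a compact non-empty set contained in $\Fix(f_i)$ which is automatically invariant under all of $f_1,\dots,f_n$, and Theorem~\ref{theo.FranksHandelParwani-plan} then yields a common fixed point.

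Concretely, I would first dispose of the trivial case in which $\mathrm{supp}(\mu)\subset\Fix(f_i)$ for \emph{every} $i=1,\dots,n$: any point of the (non-empty) support of $\mu$ is then already a common fixed point and there is nothing more to prove. Otherwise, after relabelling if necessary, I may assume $\mathrm{supp}(\mu)\not\subset\Fix(f_n)$. The hypotheses of Theorem~\ref{theo.main} are then all met by $f_n$: it is an orientation-preserving homeomorphism of $\RR^2$, it satisfies (P1) and (P2) by assumption, it preserves the probability measure $\mu$, and the support of $\mu$ is not contained in $\Fix(f_n)$. Assertion 1 of Theorem~\ref{theo.main} therefore furnishes a compact non-empty set $K\subset\Fix(f_n)$ which is invariant under every homeomorphism of $\RR^2$ that commutes with $f_n$ and preserves $\mu$.

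The diffeomorphisms $f_1,\dots,f_{n-1}$ all commute with $f_n$ and preserve $\mu$, so each of them leaves $K$ invariant; and $f_n$ fixes $K$ pointwise, hence trivially preserves it as well. Thus $K$ is a common invariant compact subset of $\RR^2$ for the commuting, orientation-preserving $C^1$-diffeomorphisms $f_1,\dots,f_n$, and a direct application of Theorem~\ref{theo.FranksHandelParwani-plan} yields the desired common fixed point.

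The real obstacle is, of course, Theorem~\ref{theo.main} itself; once that is in hand the deduction of Corollaire~\ref{coro.1} requires no additional idea. The only point worth noting is the double invariance of the compact set $K$ produced by the main theorem: it is $f_n$-invariant because $K\subset\Fix(f_n)$, and invariant under the remaining $f_j$ because those belong to the centralizer of $f_n$ inside the group of $\mu$-preserving homeomorphisms.
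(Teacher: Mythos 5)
Your proposal is correct and is essentially the paper's own argument: the paper also splits into the trivial case where $\mathrm{supp}(\mu)$ consists of common fixed points and the case where some $f_i$ moves part of the support, then applies assertion~1 of Theorem~\ref{theo.main} to that $f_i$ and feeds the resulting compact invariant subset of $\Fix(f_i)$ into Theorem~\ref{theo.FranksHandelParwani-plan}. Your explicit verification that the compact set is invariant under all of $f_1,\dots,f_n$ (by centralizer plus pointwise fixing) is exactly the implicit step the paper leaves to the reader.
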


En effet~: soit le support de $\mu$ est contenu dans l'ensemble des points fixes communs de $f_1,\dots,f_n$, ce qui implique en particulier que les diff\'eomorphismes $f_1,\dots,f_n$ ont au moins un point fixe commun~; soit, il existe $i\in \{1,\dots,n\}$  tel que le support de $\mu$ n'est pas contenu dans l'ensemble des points fixes du diff\'eomorphisme $f_i$, et on obtient l'existence d'un point fixe commun en mettant bout \`a bout le premier point du th\'eor\`eme~\ref{theo.main} et th\'eor\`eme~\ref{theo.FranksHandelParwani-plan}. 

Voici maintenant un r\'esultat concernant les $n$-uplets de $C^1$-diff\'eomorphismes de $\mathbb{S}^2$, qui d\'ecoule imm\'ediatement du second point du th\'eor\`eme~\ref{theo.main} et du th\'eor\`eme~\ref{theo.FranksHandelParwani-plan}~:

\begin{coro}
\label{coro.2}
Soient $f_1,\dots,f_n$ des $C^1$-diff\'eomorphismes de $\mathbb{S}^2$, qui pr\'eservent l'orientation, et commutent deux \`a deux. Si ces diff\'eomorphismes ont un seul point fixe commun, alors, pour tout \'el\'ement $f$ du sous-groupe de $\mathrm{Diff}^1(\mathbb{S}^2)$ qu'ils engendrent, les seules mesures de probabilit\'e $f$-invariantes sont celles dont le support est contenu dans l'ensemble des points fixes de $f$. 
\end{coro}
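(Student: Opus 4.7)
The plan is to argue by contradiction, reducing everything to the plane $\mathbb{R}^2 = \mathbb{S}^2\setminus\{p\}$, where $p$ is the unique common fixed point of $f_1,\dots,f_n$. First, I would observe that since every element $f$ of the subgroup $G$ generated by $f_1,\dots,f_n$ is a product of $f_i$'s and their inverses, $f$ also fixes $p$. Suppose there exists $f\in G$ admitting an $f$-invariant probability measure $\mu$ whose support is \emph{not} contained in $\mathrm{Fix}(f)$; I want to derive a contradiction.

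Next I would push the whole situation to $\mathbb{R}^2$. Using a diffeomorphism $\mathbb{S}^2\setminus\{p\}\cong\mathbb{R}^2$, all of $f_1,\dots,f_n$ and $f$ become orientation-preserving $C^1$-diffeomorphisms of $\mathbb{R}^2$ (they all fix $p$), and each of them extends to a $C^1$-diffeomorphism of $\mathbb{S}^2$ by construction. By Proposition~\ref{p.extension-C1}, $f$ in particular satisfies (P1) and (P2). The restriction $\mu' = \mu|_{\mathbb{R}^2}$ is a finite $f$-invariant measure on $\mathbb{R}^2$; its support is not contained in $\mathrm{Fix}(f)$, because any point of $\mathrm{supp}(\mu)\setminus\mathrm{Fix}(f)$ is different from $p\in\mathrm{Fix}(f)$ and hence lies in $\mathbb{R}^2$. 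Thus $f$ satisfies all the hypotheses of Theorem~\ref{theo.main}, assertion~2.

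Applying that assertion yields a nonempty compact subset $A\subseteq\mathrm{Fix}(f)\subseteq\mathbb{R}^2$ that is invariant under every element of $\mathrm{Homeo}(\mathbb{R}^2)$ commuting with $f$. Since each $f_i$ is a homeomorphism of $\mathbb{R}^2$ (it fixes $p$) and commutes with $f$, the compact $A$ is invariant under $f_1,\dots,f_n$ simultaneously. Now Theorem~\ref{theo.FranksHandelParwani-plan} applies to $f_1,\dots,f_n$ acting on $\mathbb{R}^2$ and leaving the nonempty compact set $A$ invariant, and delivers a common fixed point $q\in\mathbb{R}^2$ of $f_1,\dots,f_n$. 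But then $q\neq p$ (as $q\in\mathbb{R}^2=\mathbb{S}^2\setminus\{p\}$) is a second common fixed point of $f_1,\dots,f_n$, contradicting the assumption that $p$ is the unique one. Hence every $f$-invariant probability measure must be supported in $\mathrm{Fix}(f)$, which is what we wanted.

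There is no real obstacle here: the only point requiring a little care is the verification that the restricted measure $\mu'$ still has support meeting $\mathbb{R}^2\setminus\mathrm{Fix}(f)$, so that Theorem~\ref{theo.main} part~2 is genuinely applicable. Everything else is a direct chaining of the two main theorems cited just before the statement.
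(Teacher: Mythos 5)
Votre démonstration est correcte et suit exactement la voie prévue par le texte, qui présente le corollaire~\ref{coro.2} comme conséquence immédiate du second point du théorème~\ref{theo.main} et du théorème~\ref{theo.FranksHandelParwani-plan} : on retire l'unique point fixe commun $p$, on applique la proposition~\ref{p.extension-C1} et l'assertion~2 du théorème~\ref{theo.main} à $f$ sur $\mathbb{S}^2\setminus\{p\}\simeq\RR^2$, puis le théorème~\ref{theo.FranksHandelParwani-plan} au compact invariant obtenu pour produire un second point fixe commun, d'où la contradiction. Les vérifications que vous détaillez (fixation de $p$ par tout élément du groupe, support de la mesure restreinte non contenu dans $\mathrm{Fix}(f)$) sont précisément celles qu'il fallait faire.
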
 

Enfin, en mettant bout \`a bout le th\'eor\`eme~\ref{theo.FranksHandelParwani-sphere}, la proposition~\ref{p.extension-C1} et le corollaire~\ref{coro.1}, on obtient imm\'ediatement le r\'esultat suivant~:  

\begin{coro}
\label{coro.3}
Soient $f_1,\dots,f_n$ des $C^1$-diff\'eomorphismes de la sph\`ere $\mathbb{S}^2$ qui pr\'eservent l'orientation et  commutent deux \`a deux, et $G$ le sous-groupe  de $\mathrm{Diff}^1(\mathbb{S}^2)$ engendr\'e par ces diff\'eomorphismes. On suppose que les diff\'eomorphismes $f_1,\dots,f_n$ laissent invariante une mesure de probabilit\'e dont le support n'est pas r\'eduit \`a un point. Alors il existe un sous-groupe $G'$ d'indice~$2$ dans $G$ tel que  les \'el\'ements de $G'$ ont deux points fixes communs. Si l'invariant $W(f_i,f_j)$ est nul pour $1\leq i,j\leq n$, alors les diff\'eomorphismes $f_1,\dots,f_n$ ont deux points fixes communs. 
\end{coro}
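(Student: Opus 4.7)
The plan is to reduce everything to Corollary~\ref{coro.1} by first producing one common fixed point on $\mathbb{S}^2$, puncturing at that point, and then applying the planar corollary.

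First, I apply Theorem~\ref{theo.FranksHandelParwani-sphere} to the commuting family $f_1,\dots,f_n$: it yields a subgroup $G'\subset G$ of index $2$ whose elements share a common fixed point $p_\infty\in\mathbb{S}^2$. In the special case where all the invariants $W(f_i,f_j)$ vanish, one may furthermore take $G'=G$, so that $p_\infty$ is then a common fixed point of $f_1,\dots,f_n$ themselves. In both situations it remains to exhibit a second common fixed point of $G'$ in $\mathbb{S}^2\setminus\{p_\infty\}$.

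Identify $\mathbb{R}^2$ with $\mathbb{S}^2\setminus\{p_\infty\}$ via the standard differentiable chart at infinity, and choose a finite generating family $g_1,\dots,g_m$ of $G'$. Such a family exists because $G$ is a finitely generated abelian group, hence so is its finite-index subgroup $G'$ (when all $W(f_i,f_j)$ vanish one may simply take $g_i=f_i$). Each $g_i$ extends to a $C^1$-diffeomorphism of $\mathbb{S}^2$ fixing $p_\infty=\infty$, hence restricts to a $C^1$-diffeomorphism of $\mathbb{R}^2$ preserving orientation; the $g_i$ commute pairwise and, by Proposition~\ref{p.extension-C1} applied with the structure $\mathbb{S}^2=\mathbb{R}^2\sqcup\{\infty\}$, each $g_i$ satisfies properties (P1) and (P2).

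Next, the measure $\mu$ is $G$-invariant and in particular $G'$-invariant. Since its support is not reduced to a point, it is not the Dirac mass at $p_\infty$, so $\mu(\mathbb{R}^2)>0$; renormalizing the restriction of $\mu$ to $\mathbb{R}^2$ provides a $G'$-invariant probability measure on the plane. I may then invoke Corollary~\ref{coro.1} with the $g_i$ to obtain a common fixed point $p\in\mathbb{R}^2$, which is automatically fixed by the whole abelian group $G'$ generated by the $g_i$ and, seen in $\mathbb{S}^2$, furnishes a second common fixed point of $G'$ distinct from $p_\infty$. The two assertions of the corollary follow.

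No genuine obstacle is expected, since the substance of the argument is already contained in Theorem~\ref{theo.main}, Proposition~\ref{p.extension-C1} and the Franks--Handel--Parwani theorem; the only points requiring attention are the non-triviality of $\mu$ restricted to $\mathbb{R}^2$, which is guaranteed by the hypothesis on the support of $\mu$, and the passage from a common fixed point of generators to a common fixed point of the whole subgroup $G'$, which is immediate by commutativity.
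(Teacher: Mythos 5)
Your proposal is correct and follows exactly the route the paper itself indicates: Theorem~\ref{theo.FranksHandelParwani-sphere} produces the first common fixed point (of $G'$, or of the $f_i$ when the invariants $W(f_i,f_j)$ vanish), and after puncturing the sphere there, Proposition~\ref{p.extension-C1} together with Corollaire~\ref{coro.1} applied to generators of $G'$ and the (nonzero, renormalized) restriction of $\mu$ yields the second one. The paper states this combination as immediate; your write-up simply supplies the routine verifications (finite generation of $G'$, positivity of $\mu$ on the complement of the first fixed point, passage from generators to the whole group), all of which are correct.
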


On rappelle que l'invariant $W(f_i,f_j)$ est nul d\`es lors que  les diff\'eomorphismes $f_i$ et $f_j$ sont suffisamment proches de l'identit\'e en topologie $C^0$. 

\subsection{Questions et probl\`emes}
Les r\'esultats \'enonc\'es ci-dessus ne semblent pas optimaux de plusieurs points de vue. 

\smallskip

\noindent \textbf{1 - } Notre preuve du th\'eor\`eme~\ref{theo.main} repose fondamentalement sur le fait que l'hom\'eomorphisme $f$ a un comportement contr\^ol\'e \`a l'infini (\emph{i.e.} qu'il satisfait (P1) et (P2), ou qu'il s'\'etent en un $C^1$-diff\'eomorphisme de $\mathbb{S}^2$). Rien n'indique cependant que les \'enonc\'es du th\'eor\`eme~\ref{theo.main} et des corollaires~\ref{coro.1},~\ref{coro.2} et~\ref{coro.3} deviennent faux si on ne fait aucune hypoth\`ese de comportement \`a l'infini. Par exemple, rien n'interdit \emph{a priori} que le corollaire~\ref{coro.3} reste vrai si on travaille avec des hom\'eomorphismes au lieu de $C^1$-diff\'eomorphismes.
 
 \smallskip
 
 \noindent \textbf{2 - } Le th\'eor\`eme des translations planes de Brouwer implique que tout hom\'eomorphisme du plan pr\'eservant l'orientation qui admet un point r\'ecurrent (ou m\^eme un point non-errant) fixe un point. Il n'est donc pas exclu que, dans les \'enonc\'es ci-dessus, on puisse remplacer l'existence d'une mesure de probabilit\'e invariante par la simple existence d'un point r\'ecurrent ou d'un point non-errant. Par exemple, nous ne savons pas r\'epondre \`a la question suivante~: \emph{Soient $f_1,\dots,f_n$ des $C^1$-diff\'eomorphismes du plan $\RR^2$ qui pr\'eservent l'orientation et commutent deux \`a deux. On suppose qu'un \'el\'ement $f$ du groupe engendr\'e par $f_1,\dots f_n$ poss\`ede un point r\'ecurrent (ou non-errant) qui n'est pas fixe, et s'\'etend en un $C^1$-diff\'eomorphisme de $\SS^2$. Les diff\'eomorphismes  $f_1,\dots,f_n$ ont-ils n\'ecessairement un point fixe commun~?}

 \smallskip 
 
\noindent \textbf{3 - } S. Druck, F. Fang et S. Firmo ont montr\'e que le th\'eor\`eme~\ref{theo.Bonatti} de Bonatti se g\'en\'eralise aux cas d'un groupe nilpotent (\cite{DruckFangFirmo2002}). Il est donc naturel de se demander si les corollaires~\ref{coro.1},~\ref{coro.2} et~\ref{coro.3} se g\'en\'eralisent de m\^eme \`a ce cas. Dans un travail r\'ecent (\cite{Mann2011}), K. Mann a montr\'e l'existence de points fixes globaux pour des groupes (non-commutatifs) d'hom\'eomorphismes du plan pr\'eservant l'aire, sous une hypoth\`ese assez forte concernant le diam\`etre des orbites. On peut se demander s'il est possible de combiner les techniques de Mann et les n\^otres afin d'obtenir d'autres r\'esultats d'existences de points fixes. 

\smallskip

\noindent \textbf{ 4 -} Le corollaire~\ref{coro.3} implique en particulier le renforcement suivant du th\'eor\`eme de Bonatti~: \emph{si $f_1,\dots,f_n$ sont des $C^1$-diff\'eomorphismes de la sph\`ere $\mathbb{S}^2$ suffisamment $C^1$-proches de l'identit\'e, qui commutent deux \`a deux, et qui laissent invariante une mesure de probabilit\'e dont le support n'est pas r\'eduit \`a un point, alors $f_1,\dots,f_n$ ont deux points fixes communs.} N\'eanmoins, notre preuve du corollaire~\ref{coro.3} utilise le th\'eor\`eme~\ref{theo.FranksHandelParwani-plan} de Franks, Handel et Parwani qui lui-m\^eme repose sur des techniques relativement sophistiqu\'ees de ``th\'eorie de Thurston non-compacte". Au contraire, la preuve de Bonatti du th\'eor\`eme~\ref{theo.Bonatti} n'utilise que des arguments tr\`es \'el\'ementaires (essentiellement la d\'efinition de la topologie $C^1$, et le fait que l'existence d'une courbe d'indice~1 force l'existence d'un point fixe). Il est donc naturel de se demander si on peut montrer l'\'enonc\'e ci-dessus en n'utilisant que des techniques \'el\'ementaires ``\`a la Bonatti". Nous savons le faire dans le cas particulier $n=2$, mais pas lorsque $n\geq 3$.

\subsection{Plan de l'article}

Dans la section~\ref{s.reduction}, nous ramenons le th\'eor\`eme~\ref{theo.main} \`a un \'enonc\'e concernant les nombres de rotation des orbites  autour des points fixes de $f$. La preuve de cet \'enonc\'e est l'objet de la section~\ref{s.preuve}. Cette preuve utilise diff\'erentes caract\'erisations et propri\'et\'es du nombre de rotation d'une orbite autour d'un point fixe qui seront explicit\'ees dans la section~\ref{s.definitions-alternatives}, ainsi que deux r\'esultats importants de dynamique topologique sur les surfaces, qui seront rappel\'es dans la section~\ref{s.rappels}. La proposition~\ref{p.extension-C1} sera d\'emontr\'ee dans la section~\ref{s.P1-P2-diffeos}. Enfin, dans l'appendice, nous d\'ecrirons quelques exemples d'hom\'eomorphismes du plan qui satisfont, ou pas, les propri\'et\'es~(P1) et/ou~(P2).

\section{Un \'enonc\'e concernant les nombres de rotation autour des points fixes}
\label{s.reduction}

Le but de cette section est de ramener notre th\'eor\`eme principal~\ref{theo.main} \`a un \'enonc\'e technique concernant les nombres de rotation des orbites autour des points fixes. 

\bigskip

Soit $f$ un hom\'eomorphisme du plan pr\'eservant l'orientation. Nous notons $\mathrm{Rec}^+(f)$ l'ensemble des points positivement r\'ecurrent sous l'action de $f$, c'est-\`a-dire l'ensemble des points $z\in\RR^2$ pour lesquels il existe une suite d'entiers strictement croissante $(n_k)_{k\geq 0}$ telle que la suite de points $f^{n_k}(x)$ tend vers~$x$. Consid\'erons maintenant $I=(f_t)_{t\in [0,1]}$ une isotopie de l'identit\'e \`a $f$, et $z'\in\mathrm{Fix}(f)$. On dira que l'orbite d'un point $z\in \mathrm{Rec}^+(f)$ a un {\it nombre de rotation} $\rho_{I,z'}(z)$ autour de $z'$ si, pour toute suite d'entiers strictement croissante $(n_k)_{k\geq 0}$ telle que la suite $f^{n_k}(z)$ converge vers $z$, on a 
$$\lim_{k\to+\infty} \quad \frac{1}{n_k} \sum_{r=0}^{n_k-1} \mathrm{Enlace}_{I}(f^r(z),z') =\rho_{I,z'}(z).$$ 
Le formule~\eqref{e.change-isotopie-1} dans l'introduction montre que l'existence du nombre de rotation $\rho_{I,z'}(z)$ ne d\'epend pas de l'isotopie $I$ choisie, et sa valeur ne d\'epend que de la classe d'homotopie de $I$. 

\medskip

Supposons maintenant que $f$ v\'erifie la propri\'et\'e~(P2). La formule~\eqref{e.change-isotopie-2} montre que l'on peut alors trouver une isotopie $I$ joignant l'identit\'e \`a $f$ telle que la fonction $\mathrm{Tourne}_I$ s'annule sur les points fixes de $f$ proches de l'infini. Une telle isotopie $I$ sera dite {\it adapt\'ee}. Si on suppose de plus que $\mathrm{Fix}(f)$ n'est pas compact, alors il n'existe qu'une seule classe d'homotopie d'isotopies adapt\'ees~; le nombre de rotation $\rho_{I,z'}(z)$, lorsqu'il est d\'efini, ne d\'epend donc pas du choix de $I$ parmi les isotopies adapt\'ees~; on notera $\rho_{z'}(z):=\rho_{I,z'}(z)$ o\`u $I$ est une isotopie adapt\'ee.

\medskip

Pour tout hom\'eomorphisme $f$ du plan $\RR^2$, nous noterons ${\mathcal{M}}(f)$ l'ensemble des mesures bor\'eliennes de masses finies $f$-invariantes. La proposition suivante est la cl\'e de la preuve de notre th\'eor\`eme principal~\ref{theo.main}.

\begin{prop} 
\label{theo.main-2}
Soit $f$ un hom\'eomorphisme du plan $\RR^2$, qui pr\'eserve l'orientation, v\'erifie les propri\'et\'es (P1) et (P2), et tel que $\mathrm{Fix}(f)$ n'est pas compact.

\begin{enumerate}

\item \label{p.1} Soit $g$ un hom\'eomorphisme du plan qui commute avec $f$. Si $z'\in  \mathrm{Fix}(f)$, si $z\in \mathrm{Rec}^+(f)$ et si $\rho_{z'}(z)$ est d\'efini, alors $\rho_{g(z')}(g(z))$ est \'egalement d\'efini, et on a $\rho_{g(z')}(g(z))= \rho_{z'}(z)$ ou $\rho_{g(z')}(g(z))= -\rho_{z'}(z)$ selon que $g$ pr\'eserve ou renverse l'orientation.

\item \label{p.2} Pour toute mesure $\mu\in{\mathcal{M}}(f)$ et tout  point $z'\in\mathrm{Fix}(f)$, la fonction $\rho_{z'}$ est d\'efinie $\mu$-presque partout sur $ \mathrm{Rec}^+(f)\setminus\{z'\}.$

\item \label{p.3} Pour toute mesure $\mu\in{\mathcal{M}}(f)$ et tout r\'eel $a>0$, on a
$$\lim_{z'\to\infty, \,z'\in \mathrm{Fix}(f)} \;\;\mu\left(\{z\in \mathrm{Rec}^+(f)\;,\;\vert\rho_{z'}(z)\vert\geq a\}\right)=0.$$

\item \label{p.4} Supposons que $f$ s'\'etend en un $C^1$-diff\'eomorphisme de $\mathbb{S}^2=\mathbb{R}^2\sqcup\{\infty\}$. Alors pour tout $a>0$, il existe un voisinage~$W$ de l'infini dans $\RR^2$ tel que, pour tout $z'\in  \mathrm{Fix}(f)\cap W$ et  tout $z\in \mathrm{Rec}^+(f)$, on a $\vert\,\rho_{z'}(z)\vert<a$ si $\rho_{z'}(z)$ est d\'efini.

\item \label{p.5} Pour toute mesure $\mu\in{\mathcal{M}}(f)$ dont le support n'est pas contenu dans $\mathrm{Fix}(f)$, il existe un point $z'\in  \mathrm{Fix}(f)$ tel que $$\mu\left(\{z\in  \mathrm{Rec}^+(f)\;,\; \rho_{z'}(z)\not=0\}\right)\not=0.$$
\end{enumerate}
\end{prop}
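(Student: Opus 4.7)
I would handle the five parts in order, since (\ref{p.1})--(\ref{p.4}) are relatively soft consequences of the definitions combined with the control at infinity afforded by (P1)--(P2) and a careful choice of adapted isotopy, whereas (\ref{p.5}) is the dynamical heart of the proposition and must rest on the surface-dynamics tools announced for section~\ref{s.rappels}---presumably the Le Calvez equivariant foliation. Throughout, one fixes an adapted isotopy $I$, which exists by (P2) and is unique up to homotopy since $\mathrm{Fix}(f)$ is non-compact.

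For (\ref{p.1}), commutation yields $g(\mathrm{Fix}(f))=\mathrm{Fix}(f)$ and $g(\mathrm{Rec}^+(f))=\mathrm{Rec}^+(f)$, and $(gf_tg^{-1})_{t\in[0,1]}$ is another isotopy from $\mathrm{Id}$ to $f$, adapted whenever $I$ is (``$\mathrm{Tourne}$ vanishes on fixed points near $\infty$'' is transported by any homeomorphism of $\mathbb{R}^2$). Using that an orientation-preserving (resp.\ reversing) homeomorphism of $\mathbb{R}^2$ preserves (resp.\ reverses) winding numbers of paths, one compares the winding of $t\mapsto g(f_t z)-g(f_t z')$ around $0$ with the winding of $t\mapsto f_tz-f_tz'$ around $0$ and obtains $\mathrm{Enlace}_{gIg^{-1}}(g(w),g(z'))=\varepsilon\,\mathrm{Enlace}_I(w,z')+O(1)$, where $\varepsilon=\pm 1$ is the sign of the orientation of $g$. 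Summing over $r$ and dividing by $n_k$ kills the bounded error and gives $\rho_{g(z')}(g(z))=\varepsilon\,\rho_{z'}(z)$. For (\ref{p.2}), set $\phi_{z'}(z):=\mathrm{Enlace}_I(z,z')$ and observe that $\sum_{r=0}^{n-1}\phi_{z'}(f^rz)$ is the total winding of the orbit segment of $z$ around $z'$. It suffices to verify $\phi_{z'}\in L^1(\mu)$, after which Birkhoff yields full convergence of the Cesaro averages, hence convergence along every subsequence, in particular the recurrent ones. Integrability near $z'$ is a logarithmic-type estimate on the angular variation of a continuous family of homeomorphisms, and integrability at infinity follows from (P2) once $I$ is adapted.

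For (\ref{p.3}) and (\ref{p.4}), the finiteness of $\mu$ and Poincaré recurrence concentrate the mass on recurrent points lying, up to $\mu$-negligible sets, inside a fixed compact $K\subset\mathbb{R}^2$; when $z'$ moves to infinity, an adapted $I$ ensures that each linking increment $\mathrm{Enlace}_I(f^rz,z')$ with $f^rz\in K$ is arbitrarily small (the orbit subtends a vanishing angular sector at $z'$, and the isotopy contributes negligibly), so the Cesaro limit $\rho_{z'}(z)$ is uniformly small on $K$, which is (\ref{p.3}). Under the $C^1$-extension hypothesis of (\ref{p.4}), reading the same estimate in a $C^1$-chart at $\infty$ upgrades ``uniformly on $K$'' to ``uniformly on $\mathbb{R}^2\setminus\{z'\}$'', giving the claimed pointwise bound.

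The main obstacle is (\ref{p.5}). From $\mathrm{supp}(\mu)\not\subset\mathrm{Fix}(f)$ and Poincaré recurrence one extracts $z\in\mathrm{Rec}^+(f)\setminus\mathrm{Fix}(f)$ lying in $\mathrm{supp}(\mu)$. I would invoke Le Calvez's equivariant foliation theorem on $\mathbb{R}^2\setminus\mathrm{Fix}(f)$, producing a singular oriented topological foliation transverse to an isotopy from $\mathrm{Id}$ to $f$, with singular set $\mathrm{Fix}(f)$; the orbit of $z$ yields a non-trivial transverse trajectory that links non-trivially with at least one singularity $z'\in\mathrm{Fix}(f)$, and via the alternative description of $\rho_{z'}$ announced for section~\ref{s.definitions-alternatives} this non-zero linking translates into $\rho_{z'}(z)\neq 0$. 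The genuinely delicate step is to exhibit a \emph{single} $z'$ that works on a set of positive $\mu$-measure, rather than merely for one recurrent orbit; the plan is to combine (\ref{p.3}), which confines potential witnesses $z'$ to a bounded part of $\mathrm{Fix}(f)$, with a pigeonhole / measurability argument applied to the parametrized family $z'\mapsto\{z:\rho_{z'}(z)\neq 0\}$.
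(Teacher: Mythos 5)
Your plan for parts (2) and (3) rests on an integrability/uniformity claim that is false, and this is a genuine gap rather than a presentational one. For (2), the function $z\mapsto\mathrm{Enlace}_I(z,z')$ need not belong to $L^1(\mu)$: $f$ is only a homeomorphism, so there is no ``logarithmic-type estimate'' near $z'$ (think of an infinite twist at $z'$, as in Examples~5 and~5bis of the appendix, where $\rho_{z'}$ is already unbounded near $z'$), and an arbitrary finite invariant measure can charge the region where the linking increment blows up. The paper avoids this by never integrating $\mathrm{Enlace}_I$ itself: it works in a free disk $U$, integrates the winding $\alpha_{U,z'}$ accumulated over one return, and gets integrability from Kac's lemma together with the bound on $\alpha_{U,z'}/\tau_U$ coming from Franks' lemma~\ref{theo.lemme-de-Franks} and (P1) (Lemme~\ref{lem.borne-ensemble-rotation}); the fact that your proof of (2) never invokes (P1) is the symptom. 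For (3), your assertion that $\rho_{z'}$ becomes ``uniformly small on $K$'' as $z'\to\infty$ is also false: a recurrent orbit starting in $K$ can make long excursions near the distant fixed point $z'$ and wind around it many times, so only a statement in measure survives; the paper's proof needs the dominated convergence argument with $\widehat\alpha_{U,R}$ (Lemme~\ref{lem.alpha-tend-vers-0}), the domination again supplied by (P1)+Franks, and the integral comparison of Lemme~\ref{c.nombre-rotation-moyen-tend-vers-zero-2}. Even in (1), note that a uniform per-term $O(1)$ error does not disappear after Cesàro averaging; what saves the argument is that the corrections telescope to $D(f^{n_k}z,z')-D(z,z')$ for a continuous determination $D$ of the angular distortion of $g$, together with uniqueness of the adapted homotopy class -- the paper gets this more cleanly from the lifted commutation relations $\widetilde g\circ T_{z'}=T_{g(z')}\circ\widetilde g$ and $\widetilde g\circ\widetilde f_{I,z'}=\widetilde f_{I,g(z')}\circ\widetilde g$.

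Parts (4) and (5) are essentially missing. For (4), ``reading the estimate in a $C^1$-chart at $\infty$'' is not an argument, and since your (3) does not give uniformity in $z$, there is nothing to upgrade; the paper's mechanism is a specific geometric lemma at infinity (Lemme~\ref{lem.propriete-diffeo-C1}: an eigenvalue argument for the differential at $\infty$ produces, for every $q$, two disjoint arcs crossing the annulus $A_{z'}$ with $\bar f^q(\gamma^-)\cap\gamma^+=\emptyset$), which in the universal cover forces $\vert\rho_{z'}(z)\vert\leq 3/q$ for \emph{all} recurrent $z$. For (5), you correctly identify the delicate point but do not resolve it, and the sketch has three concrete problems: the isotopy $I$ need not restrict to $\RR^2\setminus\mathrm{Fix}(f)$, which is exactly why the paper uses Jaulent's theorem~\ref{theo.Brouwer-feuillete-equivariant} (removing only a closed set $X\subset\mathrm{Fix}(f)$ and controlling homotopy classes rel finite subsets of $X$) rather than Le Calvez's theorem directly; a non-nullhomotopic positively transverse loop need not have nonzero winding around any single singularity; and $\rho_{z'}(z)\neq0$ for one recurrent orbit gives no set of positive $\mu$-measure. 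The paper's route -- Poincar\'e--Bendixson analysis of $\widehat\alpha(\lambda_0)$ and $\widehat\omega(\lambda_0)$ for the leaf through a point $z_0\in\mathrm{Supp}(\mu)\cap\mathrm{Rec}^+(f)$, a case discussion on the position of $\infty$ and $z'$, a free disk $U\ni z_0$ with $\mu(U)>0$ on which $\alpha_{U,z''}$ has constant sign for every recurrent point (via the left/right-of-a-lifted-leaf argument), and the identity of Lemme~\ref{c.nombre-rotation-moyen-tend-vers-zero} -- is precisely the machinery your pigeonhole plan would still have to build.
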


Expliquons d\`es maintenant pourquoi le th\'eor\`eme~\ref{theo.main} d\'ecoule de cette proposition.

\begin{proof}[Preuve du th\'eor\`eme~\ref{theo.main} en admettant la proposition~\ref{theo.main-2}]
Soit $f$ un hom\'eomorphisme du plan pr\'eservant l'orientation. Supposons, comme dans l'\'enonc\'e du th\'eor\`eme~\ref{theo.main} que $f$ v\'erifie les propri\'et\'es (P1) et (P2), et laisse invariante une mesure de masse finie $\mu$ dont le support n'est pas contenu dans $\mathrm{Fix}(f)$. Rappelons que $\mathrm{Fix}(f)$ n'est pas vide. En effet, si $\mbox{Fix}(f)$ \'etait vide, alors, par le th\'eor\`eme des translations planes de Brouwer (\cite{Guillou1994}), toute orbite de $f$ serait errante~; en particulier, $f$ ne pourrait pr\'eserver aucune mesure de masse finie.

Le th\'eor\`eme~\ref{theo.main} est trivial si $\mathrm{Fix}(f)$ est compact~: en effet, $\mathrm{Fix}(f)$ est non-vide et invariant par tout hom\'eomorphisme qui commute avec $f$. Nous supposons donc maintenant que $\mathrm{Fix}(f)$ n'est pas compact, ce qui nous permet d'appliquer la proposition~\ref{theo.main-2}.

Pour $a>0$, consid\'erons le sous-ensemble $X_a$ de $\mathrm{Fix}(f)$ d\'efini comme suit~:
$$X_a=\left\{z'\in \mathrm{Fix}(f)\;,\; \mu\left(\{z\in  \mathrm{Rec}^+(f)\;,\;\vert\rho_{z'}(z)\vert\geq a\}\right)\geq a\right\}.$$
L'assertion~(5) de la proposition~\ref{theo.main-2} nous dit que, pour $a$ assez petit, $X_a$ n'est pas vide. L'assertion~(3) nous dit que cet ensemble est born\'e. Enfin, d'apr\`es l'assertion~(1), l'ensemble $X_a$ est invariant par tout hom\'eomorphisme $g$ qui pr\'eserve la mesure $\mu$ et qui commute avec $f$. Pour montrer la premi\`ere assertion du th\'eor\`eme~\ref{theo.main}, il suffit donc de consid\'erer l'adh\'erence de $X_a$, pour $a$ assez petit.

Supposons maintenant que $f$ peut \^etre \'etendu en un $C^1$-diff\'eomorphisme de $\SS^2$, et consid\'erons, pour $a>0$, le sous-ensemble $Y_a$ de $\mathrm{Fix}(f)$ d\'efini comme suit~:
$$Y_a=\left\{z'\in \mathrm{Fix}(f)\; , \; \mathrm {il\enskip existe}\enskip  z\in  \mathrm{Rec}^+(f) \enskip\mathrm{tel\enskip que}\enskip\rho_{z'}(z) \enskip \mathrm{existe\enskip et} \enskip \vert\rho_{z'}(z)\vert\geq a\right\}.$$
L\`a-encore, on sait gr\^ace \`a l'assertion~(5) de la proposition~\ref{theo.main-2}  que $Y_a$ est non vide d\`es que $a$ est assez petit. L'assertion~(4) nous dit que cet ensemble est born\'e, et l'assertion~(1) qu'il est invariant par tout hom\'eomorphisme  qui commute avec $f$. Pour montrer la seconde assertion du th\'eor\`eme~\ref{theo.main}, il suffit donc de consid\'erer l'adh\'erence de $Y_a$, pour $a$ assez petit.
\end{proof}

La preuve de la proposition~\ref{theo.main-2} occupe presque toute la fin de notre article.

\section{D\'efinitions alternatives et propri\'et\'es du nombre de rotation}
\label{s.definitions-alternatives}

Soit $f$ un hom\'eomorphisme du plan $\RR^2$ pr\'eservant l'orientation, et $I=(f_t)_{t\in [0,1]}$ une isotopie joignant l'identit\'e \`a  $f$.   Dans la section~\ref{s.reduction}, nous avons d\'efini le nombre de rotation $\rho_{I,z'}(z)$ de l'orbite d'un point $z\in\mathrm{Rec}^+(f)$ autour d'un point $z'\in\mathrm{Fix}(f)$, comme une moyenne de Birkhoff de la fonction $z\mapsto\mathrm{Enlace}_I(z,z')$ le long de l'orbite de $z$. Ce nombre de rotation admet cependant plusieurs d\'efinitions alternatives. Le but de cette section est de pr\'esenter certaines de ces d\'efinitions, qui interviendront au cours de notre preuve de la proposition~\ref{theo.main-2}. Ce sera \'egalement l'occasion d'introduire un certain nombre de notations, et de mettre en lumi\`ere quelques propri\'et\'es \'el\'ementaires de la quantit\'e $\rho_{I,z'}(z)$.

\bigskip

Nous consid\'erons d'abord le cas particulier o\`u $z$ est un point fixe de $f$. La d\'efinition du nombre de rotation $\rho_{I,z'}(z)$ se simplifie radicalement dans ce cas particulier~:

\begin{fait}
\label{f.nombre-rotation-1}
Si $z$ et $z'$ sont deux points fixes distincts de $f$, alors $\rho_{I,z'}(z)$ existe et est \'egal \`a~$\mathrm{Enlace}_I(z,z')$.
\end{fait}

Remarquons maintenant qu'\'etant donn\'e un point $z'\in \mathrm{Fix}(f)$, on peut toujours trouver une isotopie $I'=(f'_t)_{t\in [0,1]}$ joignant l'identit\'e \`a $f$, homotope \`a $I$, et qui fixe le point $z'$ (\emph{i.e.} telle que $f'_t$ fixe $z'$ pour tout $t$)~: il suffit par exemple de poser $f'_t=\tau_t\circ f_t$ o\`u $\tau_t$ est l'unique translation de $\mathbb{R}^2$ qui envoie $f_t(z')$ sur $z'$. Comme $I$ et $I'$ sont homotopes, on a alors $\mathrm{Enlace}_I(z,z')=\mathrm{Enlace}_{I'}(z,z')$. Par ailleurs, la quantit\'e $\mathrm{Enlace}_{I'}(z,z')$ est, par d\'efinition, le nombre de tours que fait le lacet $t\mapsto f'_t(z)-f'_t(z')$ autour de $0$. Comme $f'_t(z')=z'$, c'est aussi le nombre de tours que fait la trajectoire $\gamma_{I',z}:t\mapsto f'_t(z)$ autour du point $z'$. 

Expliquons maintenant comment interpr\'eter $\rho_{I,z'}(z)$ via un rel\`evement appropri\'e de $f$ au rev\^etement universel de l'anneau $\mathbb{R}^2\setminus\{z'\}$. Pour $z'\in\mathrm{Fix}(f)$, nous consid\'ererons l'anneau $A_{z'}=\mathbb{R}^2\setminus\{z'\}$. Nous noterons  $\pi_{z'}~:\widetilde A_{z'}\to A_{z'}$ le rev\^etement universel de l'anneau $A_{z'}$, et $T_{z'}~:\widetilde  A_{z'}\to \widetilde A_{z'}$ le g\'en\'erateur du groupe des automorphismes de rev\^etement naturellement d\'efini par le bord orient\'e d'un disque euclidien centr\'e en $z'$. \`A l'isotopie $I$ est naturellement associ\'e un rel\`evement $\widetilde f_{I,z'}$ de $f\vert_{A_{z'}}$ \`a $\widetilde A_{z'}$. Pour construire ce rel\`evement, on choisit une isotopie $I'=(f'_t)_{t\in[0,1]}$ homotope \`a $I$ qui fixe le point $z'$, et on rel\`eve l'isotopie $(f'_t\vert_{A_{z'}})_{t\in[0,1]}$ \`a $\widetilde A_{z'}$ en une isotopie partant de l'identit\'e~; l'extr\'emit\'e de cette isotopie est, par d\'efinition, l'hom\'eomorphisme $\widetilde f_{I,z'}$. On v\'erifie facilement  que $\widetilde f_{I,z'}$ ne d\'epend pas du choix de l'isotopie $I'$, et ne d\'epend d'ailleurs que de la classe d'homotopie de $I$. Le fait ci-dessous d\'ecoule directement de la d\'efinition du rel\`evement $\widetilde f_{I,z'}$ et du fait que $\mathrm{Enlace}_I(z,z')=\mathrm{Enlace}_{I'}(z,z')$ si $I$ et $I'$ sont homotopes.

\begin{fait}
\label{f.nombre-rotation-3}
Si $z$ et $z'$ sont deux points fixes distincts de $f$, il existe un unique entier $p$ tel que $\widetilde f_{I,z'}(\widetilde z)=T_{z'}^p(\widetilde z)$ pour tout relev\'e $\widetilde z$ de $z$ dans $\widetilde A_{z'}$. Cet entier $p$ n'est autre que le nombre de rotation $\rho_{I,z'}(z)=\mathrm{Enlace}_I(z,z')$.
\end{fait}

Consid\'erons maintenant trois points fixes distincts $z,z'_1,z'_2$ de $f$. D'apr\`es la formule~\eqref{e.change-isotopie-1}, la quantit\'e $\rho_{I,z'_1}(z)-\rho_{I,z'_2}(z)=\mathrm{Enlace}_I(z,z'_1)-\mathrm{Enlace}_I(z,z'_2)$ ne d\'epend pas de l'isotopie $I$. 

Rappelons que $\mathbb{S}^2$ est vu comme le compactifi\'e $\mathbb{R}^2\sqcup\{\infty\}$. \'Etant donn\'es deux points distincts $z'_1,z'_2\in\mathrm{Fix}(f)$, nous consid\'ererons l'anneau $A_{z'_1,z'_2}:=\mathbb{S}^2\setminus\{z'_1,z'_2\}$. Nous noterons $\pi_{z'_1,z'_2}~:\widetilde A_{z'_1,z'_2}\to A_{z'_1,z'_2}$ le rev\^etement universel de l'anneau $A_{z'_1,z'_2}$, et $T_{z'_1,z'_2}~:\widetilde  A_{z'_1,z'_2}\to \widetilde A_{z'_1,z'_2}$ l'automorphisme de rev\^etement  naturellement d\'efini par le bord orient\'e d'un petit disque euclidien centr\'e en $z'_1$. Nous noterons $\widetilde f_{z'_1,z'_2}$ le rel\`evement de $\overline f\vert_{ A_{z'_1,z'_2}}$ qui fixe les relev\'es de $\infty$, o\`u $\overline f$ est l'extension de $f$ \`a la sph\`ere $\SS^2=\RR^2\sqcup\{\infty\}$. Si $I'=(f_t')_{t\in [0,1]}$ est une isotopie joignant l'identit\'e \`a $f$ et qui fixe les points $z_1'$ et $z_2'$, alors l'extension de cette isotopie \`a $\SS^2$ nous donne une isotopie dans $\mathrm{Homeo}(\SS^2)$ joignant l'identit\'e \`a $\bar f$, qui fixe $z'_1$, $z'_2$ et $\infty$. Si on restreint cette isotopie \`a $A_{z'_1,z'_2}$, puisqu'on rel\`eve au rev\^etement  universel $\widetilde A_{z'_1,z'_2}$, on obtient une isotopie joignant l'identit\'e \`a $\widetilde f_{z'_1,z'_2}$. Ceci montre le fait suivant~:

\begin{fait}
\label{f.nombre-rotation-5}
Si $z$, $z'_1$ et $z'_2$ sont trois points fixes distincts de $f$, alors il existe un entier $p\in\ZZ$ tel que $\widetilde f_{z'_1,z'_2}(\widetilde z)=T_{z'_1,z'_2}^p(\widetilde z)$ pour tout relev\'e $\widetilde z$ de $z$ \`a $\widetilde A_{z'_1,z'_2}$~; cet entier $p$ n'est autre que la diff\'erence des nombres de rotations $\rho_{I,z'_1}(z)-\rho_{I,z'_2}(z)$.
\end{fait}

Avant de passer au cas o\`u $z$ n'est pas un point fixe, relions le nombre de rotation $\rho_{I,z'}(z)$ \`a la quantit\'e $\mathrm{Tourne}_I(z)$.

\begin{fait}
\label{f.nombre-rotation-6}
\'Etant donn\'e $z'\in \mathrm{Fix}(f)$, il existe un voisinage $W$ de l'infini tel que, pour tout $z\in W\cap\mathrm{Fix}(f)$, le nombre de rotation $\rho_{I,z'}(z)$ est \'egal \`a la quantit\'e $\mathrm{Tourne}_I(z)$.
\end{fait}

En particulier, si $I$ est une isotopie adapt\'ee, alors, pour tout $z'\in\mathrm{Fix}(f)$, il existe un voisinage $W$ de l'infini tel que, si $z\in\mathrm{Fix}(f)\cap W$, alors $\rho_{z'}(z)=0$. 

\begin{proof}[Preuve du fait~\ref{f.nombre-rotation-6}]
Soit $B$ une boule euclidienne de $\mathbb{R}^2$ centr\'ee \`a l'origine tel que le lacet $\gamma_{I,z'}:t\mapsto f_t(z')$ soit contenu dans $B$. Soit $W$ un voisinage de l'infini dans $\mathbb{R}^2$ tel que, si $z\in \mathrm{Fix}(f)\cap W$, alors le lacet $\gamma_{I,z}:t\mapsto f_t(z)$ est disjoint de $B$. Alors, pour tout $z\in\mathrm{Fix}(f)\cap W$, les lacets $\gamma_{I,z,z'}:t\mapsto f_t(z)-f_t(z')$ et $\gamma_{I,z}:t\mapsto f_t(z)$ sont homotopes dans $\mathbb{R}^2\setminus\{0\}$, et on a donc 

\noindent~\hfill$\displaystyle \rho_{I,z'}(z)=\mathrm{Enlace}_I(z,z')=\int_{\gamma_{I,z,z'}}d\theta=\int_{\gamma_{I,z}}d\theta=\mathrm{Tourne}_I(z).$\hfill
\end{proof}

\begin{remark}
\label{r.caracterisation-isotopie-adaptee}
Soit $z'$ un point fixe de $f$. Les faits~\ref{f.nombre-rotation-3} et~\ref{f.nombre-rotation-6} montrent que l'isotopie $I$ est adapt\'ee si et seulement si il existe un voisinage $W$ de l'infini tel que le rel\`evement $\widetilde f_{I,z'}:\widetilde A_{z'}\to\widetilde A_{z'}$ fixe les relev\'es des points de $W\cap\mathrm{Fix}(f)$.
\end{remark}

Passons maintenant au cas o\`u $z$ n'est pas un point fixe. Le fait suivant g\'en\'eralise le fait~\ref{f.nombre-rotation-3} et d\'ecoule directement des d\'efinitions du nombre de rotation $\rho_{I,z'}(z)$ et du rel\`evement $\widetilde f_{I,z'}$.

\begin{fait}
\label{f.nombre-rotation-7}
Soit $z'\in\mathrm{Fix}(f)$, soit $z\in\mathrm{Rec}^+(f)$, et $\widetilde z$ un relev\'e de $z$ dans $\widetilde A_{z'}$. Pour toute suite strictement croissante d'entiers $(n_k)_{k\geq 0}$ telle que $f^{n_k}(z)$ tend vers $z$, il existe une suite d'entiers $(p_k)_{k\geq 0}$ telle que $T_{z'}^{-p_k}\circ \widetilde f_{I,z'}^{n_k}(\widetilde z)$ converge vers $\widetilde z$. Dire que l'orbite du point $z$ a un nombre de rotation $\rho_{I,z'}(z)$ autour de $z'$, c'est dire que le quotient $p_k/n_k$ tend vers $ \rho_{I,z'}(z)$ quand $k\to\infty$, quelle que soit la suite d'entiers $(n_k)$. 
\end{fait}

Passons maintenant \`a la g\'en\'eralisation du fait~\ref{f.nombre-rotation-5}.

\begin{fait}
\label{f.nombre-rotation-8}
Soient $z'_1$ et $z'_2$ deux points fixes distincts de $f$. Soit $z\in\mathrm{Rec}^+(f)$ tel que les nombres de rotation $\rho_{I,z'_1}(z)$ et $\rho_{I,z'_2}(z)$ existent tous les deux, et soit $\widetilde z$ un relev\'e de $z$ dans $\widetilde A_{z'_1,z'_2}$.  Pour toute suite strictement croissante d'entiers $(n_k)_{k\geq 0}$ telle que $f^{n_k}(z)$ tend vers $z$, il existe une suite d'entiers $(p_k)_{k\geq 0}$ telle que $T_{z'_1,z'_2}^{-p_k}\circ \widetilde f_{z'_1,z'_2}^{n_k}(\widetilde z)$ converge vers $\widetilde z$. La suite $(p_k/n_k)_{k\geq 0}$ converge vers $ \rho_{I,z'_1}(z)-\rho_{I,z_2'}(z)$. 
\end{fait}

\section{Deux outils}
\label{s.rappels}

Dans cette section, nous pr\'esentons deux r\'esultats, concernant la dynamique des hom\'eomorphismes de surfaces, qui seront des outils importants dans notre preuve de la proposition~\ref{theo.main-2}.

\bigskip

Commen\c cons par rappeler un r\'esultat, d\^u \`a J. Franks, qui constitue une des \'etapes principales de la preuve du th\'eor\`eme des translations planes de Brouwer. Notons 
$$\begin{array}{rrcl}
\pi~: &\RR^2&\to&\mathbb{A}\\ &(x,y)&\mapsto & (x+\ZZ,y)
\end{array}$$ 
le rev\^etement universel de l'anneau ouvert $\mathbb{A}=\RR/\ZZ\times \RR$, et 
$$\begin{array}{rrcl}
T~: &\RR^2&\to&\RR^2\\ &(x,y)&\mapsto &(x+1,y)
\end{array}$$ 
l'automorphisme fondamental de rev\^etement. Nous appellerons {\it disque ouvert} $U$ d'une surface~$M$ toute partie ouverte de $M$ hom\'eomorphe \`a $\RR^2$.

\begin{thm}[Lemme de Franks, \cite{Franks1988}]
\label{theo.lemme-de-Franks}
Soit $g$ un hom\'eomorphisme de  $\mathbb{A}$ isotope \`a l'identit\'e et $\widetilde g$ un rel\`evement de $g$ \`a $\RR^2$. Soit $U$ un disque ouvert de $\mathbb{A}$ et $\widetilde U$ une composante connexe de $\pi^{-1}(U)$. On suppose que
\begin{enumerate}
\item $\widetilde g(\widetilde U)\cap \widetilde U=\emptyset$,

\item il existe $q>0$ et $p\geq 0$ tel que $\widetilde g^q(\widetilde U)\cap T^p(\widetilde U)\not=\emptyset$,

\item il existe $q'>0$ et $p'\leq 0$ tel que $\widetilde g^{q'}(\widetilde U)\cap T^{p'}(\widetilde U)\not=\emptyset$.
\end{enumerate}
Alors $\widetilde g$ a au moins un point fixe.
\end{thm}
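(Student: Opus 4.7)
The plan is to argue by contradiction: I assume that $\widetilde g$ has no fixed point in $\RR^2$ and derive a contradiction with the three hypotheses.

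Under this assumption, $\widetilde g$ is a fixed-point-free orientation-preserving homeomorphism of $\RR^2$, i.e.\ a Brouwer homeomorphism. The first ingredient is the standard free disk lemma of Brouwer theory (a direct consequence of the plane translation theorem, see \cite{Guillou1994}): since $\widetilde U$ is a topological disk satisfying $\widetilde g(\widetilde U)\cap\widetilde U=\emptyset$ by hypothesis~(1), the iterates $\widetilde g^n(\widetilde U)$, $n\in\ZZ$, are pairwise disjoint. In particular $\widetilde g^n(\widetilde U)\cap\widetilde U=\emptyset$ for every $n\neq 0$, which, confronted with hypothesis~(2), forces $p\geq 1$; likewise hypothesis~(3) forces $p'\leq -1$.

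The next step is to exploit the $T$-equivariance of $\widetilde g$ to combine hypotheses~(2) and~(3). Since $\widetilde g$ commutes with $T$, the translated versions
\[
\widetilde g^{q}(T^{k}\widetilde U)\cap T^{k+p}\widetilde U\neq\emptyset,\qquad \widetilde g^{q'}(T^{k}\widetilde U)\cap T^{k+p'}\widetilde U\neq\emptyset
\]
hold for every $k\in\ZZ$. The idea is then to build a closed loop of translates $T^{k_0}\widetilde U, T^{k_1}\widetilde U,\dots,T^{k_M}\widetilde U$ with $k_0=k_M=0$, each consecutive pair linked by the first or second relation, by taking $-p'>0$ steps of type $+p$ and $p>0$ steps of type $+p'$. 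The net horizontal translation is then zero and the total number of $\widetilde g$-iterations is $n=(-p')q+p\,q'>0$. If I could conclude from such a chain that $\widetilde g^n(\widetilde U)\cap\widetilde U\neq\emptyset$, this would contradict the pairwise disjointness established in the first step and finish the proof.

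The main obstacle lies precisely here: the above chain is only a combinatorial object, and the intersection witnesses at consecutive links are a priori different points, so one cannot immediately exhibit a single orbit arc of $\widetilde g$ from $\widetilde U$ back to $\widetilde U$. To overcome this difficulty, I would appeal to the Brouwer-theoretic machinery developed by Franks in \cite{Franks1988}, or equivalently to a brick-decomposition argument adapted from the proofs collected in \cite{Guillou1994}: one shows that any $T$-equivariant closed chain of free topological disks along positive iterates of a Brouwer homeomorphism can be refined to a genuine orbit return, by an induction on the length of the chain together with repeated application of the free disk lemma to smaller and smaller nested sub-disks extracted from each $T^{k_i}\widetilde U$. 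This promotion of a combinatorial chain to an actual orbit arc is the heart of the argument; once it is carried out, the resulting intersection $\widetilde g^n(\widetilde U)\cap\widetilde U\neq\emptyset$ contradicts the first step and proves that $\widetilde g$ must after all have a fixed point.
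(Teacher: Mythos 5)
The paper does not prove this statement at all: it imports it from Franks' article, so there is no internal proof to compare with; your attempt has to stand on its own. Your reduction is the standard one and is fine as far as it goes: assuming $\widetilde g$ fixed-point free, Brouwer's free-disk lemma gives pairwise disjointness of the iterates $\widetilde g^n(\widetilde U)$, forcing $p\geq 1$, $p'\leq -1$, and the $T$-equivariance lets you build a \emph{periodic chain} of pairwise disjoint free disks $T^{k_0}\widetilde U,\dots,T^{k_M}\widetilde U$ with $k_0=k_M=0$ (ordering the $-p'$ steps of size $p$ before the $p$ steps of size $p'$ keeps the intermediate translates distinct). Up to this point you are following exactly the way Franks deduces his Poincar\'e--Birkhoff-type statement from his key lemma.

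The gap is precisely that key lemma: ``a fixed-point-free orientation-preserving homeomorphism of $\RR^2$ admits no periodic chain of free disks.'' You cannot discharge it by ``appealing to the machinery developed by Franks in \cite{Franks1988}'', because that lemma \emph{is} the content of the statement you are asked to prove -- the appeal is circular. And the alternative you sketch (induction on the length of the chain, repeatedly applying the free-disk lemma to nested sub-disks of the $T^{k_i}\widetilde U$, so as to upgrade the combinatorial chain to a genuine return $\widetilde g^n(\widetilde U)\cap\widetilde U\neq\emptyset$) does not describe a working mechanism: a periodic disk chain does not yield a return of the original disk, and shrinking the disks does nothing to make the intersection witnesses at consecutive links line up along a single orbit. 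The actual argument requires a different idea. In Franks' induction one \emph{modifies the map}: one composes $\widetilde g$ with a homeomorphism $h$ supported in one of the free disks of the chain, pushing the arrival point of one link onto the departure point of the next; the chain for $h\circ\widetilde g$ is one disk shorter, and a fixed point $z$ of $h\circ\widetilde g$ is either a fixed point of $\widetilde g$ (if $\widetilde g(z)$ lies outside the support of $h$) or violates the freeness of the supporting disk. Iterating down to a chain of length one, Brouwer's lemma gives the contradiction. (Equivalent routes exist via translation arcs or via brick decompositions, as in the texts collected around \cite{Guillou1994}.) Without this perturbation step, or an explicit substitute for it, the heart of your proof is missing.
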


\'Enon\c cons maintenant un r\'esultat plus r\'ecent d\^u \`a O.~Jaulent.

\begin{thm}[Jaulent, \cite{Jaulent2010}]
\label{theo.Brouwer-feuillete-equivariant}
Soit $M$ une surface orient\'ee, $g$ un hom\'eomorphisme de $M$ isotope \`a l'identit\'e, et $I=(g_t)_{t\in[0,1]}$ une isotopie joignant l'identit\'e \`a $g$ dans $\mathrm{Homeo}(M)$. Il existe alors une partie ferm\'ee $X\subset \mathrm{Fix}(g)$, et une isotopie $I'=(g'_t)_{t\in [0,1]}$ joignant l'identit\'e \`a $g\vert_{M\setminus X}$ dans $\mathrm{Homeo}(M\setminus X)$ tels que
\begin{enumerate}
\item Pour tout $z\in X$, le lacet $\gamma_{I,z}:t\mapsto g_t(z)$ est homotope \`a z\'ero dans $M$. 

\item Pour tout $z\in\mathrm{Fix}(g)\setminus X$, le lacet $\gamma_{I',z}:t\mapsto g'_t(z)$ n'est pas homotope \`a z\'ero dans $M\setminus X$.

\item Pour tout $z\in M\setminus X$, les trajectoires $\gamma_{I,z}$ et $\gamma_{I',z}$ sont homotopes (\`a extr\'emit\'es fix\'ees) dans~$M$.

\item Il existe un feuilletage topologique orient\'e $\mathcal{F}$ sur $M\setminus X$ tel que, pour tout $z\in M\setminus X$, la trajectoire $\gamma_{I',z}$ est homotope dans $M\setminus X$ \`a un chemin positivement transverse \`a $\mathcal{F}$.
\end{enumerate}

De plus , l'isotopie $I'=(g'_t)_{t\in [0,1]}$ satisfait la propri\'et\'e suivante~: 
\begin{enumerate}
\item[(5)] Pour toute partie finie $Y\subset X$, il existe une isotopie $I'_Y=(g'_{Y,t})_{t\in [0,1]}$ joignant l'identit\'e \`a $g$ parmi les hom\'eomorphismes de $M$ qui fixent les points de $Y$, et telle que, si $z\in M\setminus X$, les chemins $t\mapsto g'_t(z)$ et $t\mapsto g'_{Y,t}(z)$ sont homotopes dans $M\setminus Y$.
\end{enumerate}
\end{thm}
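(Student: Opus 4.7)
[Esquisse d'une preuve possible du théorème~\ref{theo.Brouwer-feuillete-equivariant}]
L'idée est de construire d'abord la paire $(X,I')$ par un argument de maximalité fondé sur un lemme local de ``suppression de point fixe'', puis d'appliquer le théorème de Brouwer feuilleté équivariant de Le Calvez à $(M\setminus X, g|_{M\setminus X}, I')$ pour obtenir le feuilletage $\mathcal{F}$. La propriété~(5) nécessitera un argument supplémentaire.

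\textbf{Étape 1 : lemme de suppression.} Je commencerais par montrer le résultat suivant : si $J=(h_t)_{t\in[0,1]}$ est une isotopie joignant l'identité à $g$ dans $\mathrm{Homeo}(N)$ (avec $N$ ouverte dans $M$), et si $z\in\mathrm{Fix}(g)\cap N$ est tel que la trajectoire $\gamma_{J,z}$ est homotope à zéro dans $N$, alors il existe une isotopie $J^*=(h^*_t)_{t\in[0,1]}$ joignant l'identité à $g|_{N\setminus\{z\}}$ dans $\mathrm{Homeo}(N\setminus\{z\})$, telle que $\gamma_{J^*,z'}$ soit homotope à $\gamma_{J,z'}$ dans $N$ (à extrémités fixées) pour tout $z'\in N\setminus\{z\}$. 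L'idée est classique : la nullité homotopique de $\gamma_{J,z}$ permet de choisir un lacet $(\phi_t)_{t\in[0,1]}$ dans $\mathrm{Homeo}_0(N)$ basé en $\mathrm{Id}$ dont la trajectoire en $z$ est $\gamma_{J,z}^{-1}$ ; en posant $\tilde h_t=\phi_t\circ h_t$ on obtient une isotopie homotope à $J$ qui fixe $z$ en tout temps, et que l'on peut restreindre à $N\setminus\{z\}$.

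\textbf{Étape 2 : construction de $X$ et $I'$ par maximalité.} Je considérerais la famille $\mathcal{F}$ des paires $(X',I')$ où $X'\subset\mathrm{Fix}(g)$ est fermée dans $M$, où $I'$ est une isotopie joignant $\mathrm{Id}$ à $g|_{M\setminus X'}$ dans $\mathrm{Homeo}(M\setminus X')$, telle que $\gamma_{I',z}$ soit homotope à $\gamma_{I,z}$ dans $M$ pour tout $z\in M\setminus X'$ et que $\gamma_{I,z}$ soit homotope à zéro dans $M$ pour tout $z\in X'$. Cette famille contient $(\emptyset, I)$, et l'on peut la munir d'un ordre partiel où $(X_1,I_1')\preccurlyeq (X_2,I_2')$ lorsque $X_1\subset X_2$ et $I_2'$ s'obtient à partir de $I_1'$ par application itérée du lemme de suppression aux points de $X_2\setminus X_1$. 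Pour appliquer Zorn, il faut vérifier qu'une chaîne totalement ordonnée admet un majorant : à la limite, l'union croissante d'isotopies « compatibles » se recolle (en utilisant la compacité locale pour extraire une limite sur tout compact de $M\setminus\bigcup X'_\alpha$), et l'on prend l'adhérence de $\bigcup X'_\alpha$ — cette adhérence reste incluse dans $\mathrm{Fix}(g)$ par continuité, et la condition (1) pour ses points est préservée. Soit $(X, I')$ un élément maximal.

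\textbf{Étape 3 : la maximalité entraîne la propriété~(2).} Si (2) échouait, il existerait $z_0\in\mathrm{Fix}(g)\setminus X$ tel que $\gamma_{I',z_0}$ soit homotope à zéro dans $M\setminus X$. Appliquant le lemme de l'étape~1 à $(M\setminus X, I', z_0)$, on pourrait produire $(X\cup\{z_0\}, I'')\in\mathcal{F}$ strictement plus grand, contredisant la maximalité. En particulier $z_0\in X$ (la propriété (1) pour $z_0$ découlerait de (3) appliqué à $I$ et $I'$).

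\textbf{Étape 4 : le feuilletage transverse.} Sur $M\setminus X$, l'isotopie $I'$ joint $\mathrm{Id}$ à $g|_{M\setminus X}$ et, d'après~(2), aucun point fixe de $g|_{M\setminus X}$ n'a de trajectoire contractile. Le théorème de Brouwer feuilleté équivariant de Le Calvez s'applique alors et fournit un feuilletage topologique orienté $\mathcal{F}$ sur $M\setminus X$ tel que chaque trajectoire $\gamma_{I',z}$ soit homotope, dans $M\setminus X$, à un chemin positivement transverse à $\mathcal{F}$, ce qui établit (4).

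\textbf{Étape 5 : la propriété~(5).} Pour une partie finie $Y\subset X$, on obtient $I_Y'$ en réinjectant, via une version ``paramétrée par $Y$'' de la construction de l'étape~1, les points de $Y$ comme points fixes de l'isotopie. Plus précisément, à partir de $I'$ (qui est une isotopie sur $M\setminus X\subset M\setminus Y$), on l'étend à une isotopie sur $M$ en utilisant le fait que les trajectoires en les points de $X$ sont contractiles, puis on la modifie localement au voisinage de chaque point de $Y$ par des lacets appropriés de $\mathrm{Homeo}_0$ pour fixer les points de $Y$. La finitude de $Y$ permet d'effectuer ces modifications indépendamment et à supports disjoints, ce qui garantit que les trajectoires des points de $M\setminus X$ ne sont affectées qu'à homotopie près dans $M\setminus Y$.

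\textbf{Principal obstacle.} La difficulté principale est l'étape~2, et plus précisément le contrôle des limites dans l'argument de Zorn : il faut s'assurer que le recollement des isotopies le long d'une chaîne produit bien une isotopie (continue en $t$, non seulement dans la topologie compacte-ouverte mais aussi à la limite $t=1$ où elle doit coïncider avec $g$), et que les classes d'homotopie des trajectoires sont préservées à la limite. Ce contrôle repose sur une compacité locale assez fine et, en pratique, Jaulent invoque plutôt des outils de théorie de Thurston non-compacte et des techniques de décomposition pour mener à bien cette étape sans recours direct à Zorn.
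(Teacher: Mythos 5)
Un point de cadrage d'abord : l'énoncé que vous démontrez n'est pas prouvé dans l'article. C'est un résultat cité (théorème~\ref{theo.Brouwer-feuillete-equivariant}, attribué à Jaulent~\cite{Jaulent2010}, article alors en préparation), utilisé comme outil ; il n'y a donc pas de preuve « de l'article » à laquelle comparer la vôtre. Cela dit, votre stratégie générale (lemme local de suppression d'un point fixe à trajectoire contractile, argument de maximalité pour produire $X$, puis application du théorème feuilleté équivariant de Le Calvez~\cite{LeCalvez2005} sur $M\setminus X$ où plus aucun point fixe n'est contractile) est bien dans l'esprit de la démonstration de Jaulent, et vos étapes 1, 3 et 4 sont essentiellement correctes au niveau d'une esquisse (pour l'étape 1, l'argument propre passe par la fibration d'évaluation $\mathrm{Homeo}_0(N)\to N$, $\varphi\mapsto\varphi(z)$, plutôt que par l'existence d'un lacet de $\mathrm{Homeo}_0(N)$ de trajectoire prescrite, mais c'est réparable).

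Les lacunes réelles sont au nombre de deux. D'une part, l'étape 2 : le passage à la limite le long d'une chaîne (recoller des isotopies définies sur des ouverts de plus en plus petits, en contrôlant à la fois la continuité de la famille limite et les classes d'homotopie des trajectoires) est précisément le c\oe ur technique du travail de Jaulent ; vous le signalez comme « principal obstacle », mais le renvoi à des « outils de théorie de Thurston non-compacte » est inexact (c'est la méthode de Franks--Handel--Parwani, pas celle de Jaulent) et ne comble pas le trou : en l'état, l'existence d'un majorant pour une chaîne n'est pas établie. D'autre part, l'étape 5 est injustifiée et même circulaire : vous proposez d'« étendre $I'$ à une isotopie sur $M$ en utilisant le fait que les trajectoires en les points de $X$ sont contractiles », or c'est exactement ce que l'on ne sait pas faire — le texte qui suit l'énoncé dans l'article souligne qu'on ignore si $I'$ se prolonge en une isotopie de $M$ fixant les points de $X$, et la propriété~(5) est précisément le substitut affaibli de cette extension. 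La propriété~(5) ne peut pas se déduire après coup d'un $I'$ abstrait obtenu par Zorn : deux isotopies dont les trajectoires sont homotopes dans $M$ à celles de $I$ n'ont aucune raison d'avoir des trajectoires homotopes dans $M\setminus Y$ ; il faut l'extraire de la construction même de $I'$ (comme limite d'isotopies de $M$ fixant des parties finies de plus en plus grandes de $X$), ce que votre schéma ne fournit pas.
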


L'assertion~(4)  signifie que la trajectoire $\gamma_{I',z}:t\mapsto g_t'(z)$ est homotope \`a un chemin $\gamma_z~:[0,1]\to M\setminus X$ v\'erifiant la propri\'et\'e suivante. Pour tout $t_0\in[0,1]$, il existe un voisinage $\iota\subset [0,1]$ de $t_0$, un voisinage $U\subset M\setminus X$ de $\gamma_z(t_0)$ et un hom\'eomorphisme $\Phi~: U\to ]0,1[^2$ transportant l'orientation de $M$ sur l'orientation usuelle de $\RR^2$, envoyant le feuilletage $\mathcal{F}\vert_W$ sur le feuilletage en horizontales dirig\'ees suivant les $x$ croissants et tel que $p_2\circ\Phi\circ \gamma_z$ soit croissant sur $\iota$, o\`u $p_2:~\RR^2\to\RR$ est la deuxi\`eme projection. 

\'Etant donn\'es un hom\'eomorphisme $g$ d'une surface $M$, et une isotopie $I$ joignant  l'identit\'e \`a $g$, un point fixe $z\in\mathrm{Fix}(g)$ est dit \emph{contractile} pour l'isotopie $I$ si le lacet $\gamma_{I,z}:t\mapsto g_t(z)$ est contractile dans $M$. L'assertion~(1) du th\'eor\`eme ci-dessus affirme donc que les points de $X$ sont des points fixes contractiles pour l'isotopie $I$.  L'assertion~(2) dit que $I'$ n'a aucun point fixe contractile. L'assertion~(4) n'est autre que la version feuillet\'ee \'equivariante du th\'eor\`eme des translations planes de Brouwer, due \`a P. Le Calvez (\cite{LeCalvez2005}), appliqu\'ee \`a l'isotopie $I'$ sur $M\setminus X$. Tr\`es grossi\`erement, le th\'eor\`eme de Jaulent affirme donc que, quitte \`a \^oter \`a $M$ un ensemble de points fixes contractiles $X$ pour $I$, et quitte \`a remplacer $I$ par une isotopie $I'$ tel que les chemins $\gamma_{I,z}$ et $\gamma_{I',z}$ sont homotopes dans $M$, il n'existe plus aucun point fixe contractile. On peut donc  appliquer le th\'eor\`eme des translations planes feuillet\'ee \'equivariant de Le Calvez.

On ne sait pas s'il est possible de prolonger l'isotopie $I'=(g_t')_{t\in [0,1]}$ donn\'ee par le th\'eor\`eme~\ref{theo.Brouwer-feuillete-equivariant} en une isotopie joignant l'identit\'e \`a $g$ dans $\mathrm{Homeo}(M)$ qui fixe les points de $X$ (sauf dans certains cas particuliers, par exemple si $X$ est totalement discontinu). \`A d\'efaut, l'assertion~(5) du th\'eor\`eme~\ref{theo.Brouwer-feuillete-equivariant} garantit une propri\'et\'e plus faible, mais suffisante pour la plupart des applications.

\section{Preuve de la proposition~\ref{theo.main-2}}
\label{s.preuve}

Dans toute cette partie, nous consid\'erons un hom\'eomorphisme $f$ du plan $\mathbb{R}^2$ qui pr\'eserve l'orientation, qui v\'erifie les propri\'et\'es (P1) et (P2), et dont l'ensemble des points fixes n'est pas compact. Nous fixons \'egalement une isotopie $I=(f_t)_{t\in[0,1]}$ adapt\'ee \`a $f$. On aura donc $\rho_{z'}(z)=\rho_{I,z'}(z)$ d\`es que ces quantit\'es seront d\'efinies. Nous utiliserons les notations d\'efinies dans la partie~\ref{s.definitions-alternatives}. En particulier, nous rappelons que, pour tout point $z'\in\RR^2$, nous notons $A_{z'}$ l'anneau $\RR^2\setminus\{z'\}$, nous notons $\pi_{z'}:\widetilde A_{z'}\to A_{z'}$ le rev\^etement universel de cet anneau, et $\widetilde f_{I,z'}$ le rel\`evement de $f\vert_{A_{z'}}$ \`a $\widetilde A_{z'}$ associ\'e \`a l'isotopie $I$.

\medskip

\begin{proof}[Preuve de l'assertion~(1).]
Consid\'erons un hom\'eo\-morphisme~$g$ du plan qui commute avec $f$, un point $z'\in\mathrm{Fix}(f)$ et un point $z\in\mathrm{Rec}^+(f)$, distinct de $z'$, tels que le nombre de rotation $\rho_{z'}(z)$ existe. Supposons, pour fixer les id\'ees, que $g$ pr\'eserve l'orientation. Choisissons un rel\`evement  $\widetilde g:\widetilde A_{z'}\to \widetilde A_{g(z')}$ de l'hom\'eo\-morphisme $g\vert _{A_{z'}}:A_{z'}\to A_{g(z')}$. 

Nous affirmons que les deux \'egalit\'es suivantes ont lieu~:
\begin{equation}
\label{e.commutation}
\widetilde g \circ T_{z'} = T_{g(z')}\circ \widetilde g\quad\quad\quad\widetilde g \circ \widetilde f_{I,z'} = \widetilde f_{I,g(z')}\circ \widetilde g.
\end{equation}
L'\'egalit\'e de gauche d\'ecoule directement des d\'efinitions des automorphismes de rev\^etement $T_{z'}$ et $T_{g(z')}$, et du fait que $g$ pr\'eserve l'orientation. Pour montrer l'\'egalit\'e de droite, rappelons que $\widetilde f_{I,z'}$ est le rel\`evement de $f\vert_{A_{z'}}$ \`a $\widetilde A_{z'}$ qui fixe les relev\'es des points de $\mathrm{Fix}(f)$ suffisamment proches de l'infini (voir la remarque~\ref{r.caracterisation-isotopie-adaptee}). Si $z$ est un point de $\mathrm{Fix}(f)$ proche de l'infini, alors $g^{-1}(z)$ est aussi un point de $\mathrm{Fix}(f)$ proche de l'infini. Par suite, si $\widetilde z$ est un relev\'e de $z$ dans $\widetilde A_{g(z')}$, alors $\widetilde f_{I,z'}$ fixe $\widetilde g^{-1}(\widetilde z)$, et on a donc $\widetilde g \circ \widetilde f_{I,z'} \circ \widetilde g^{-1}(\widetilde z)=\widetilde z$. Ceci montre que $\widetilde g \circ \widetilde f_{I,z'} \circ \widetilde g^{-1}$ est le rel\`evement de $f\vert_{A_{g(z')}}$ \`a $\widetilde A_{g(z')}$ qui fixe les relev\'es des points de $\mathrm{Fix}(f)$ suffisamment proches de l'infini. D'apr\`es la remarque~\ref{r.caracterisation-isotopie-adaptee}, ce rel\`evement co\"{\i}ncide donc avec $\widetilde f_{I,g(z')}$. Ceci montre l'\'egalit\'e de droite.

Consid\'erons alors une suite d'entiers strictement croissante $(n_k)_{k\geq 0}$. Puisque $g$ commute avec $f$, si la suite de points $(f^{n_k}(g(z)))_{k\geq 0}$ converge vers $g(z)$, alors la suite de points $(f^{n_k}(z))_{k\geq 0}$ converge vers~$z$. Puisque $\widetilde g$ est un rel\`evement de $g$, si le point $\widetilde g(\widetilde z)\in \widetilde A_{g(z')}$ est un relev\'e du point $g(z)\in A_{g(z')}$, alors le point $\widetilde z\in \widetilde A_{z'}$ est un relev\'e du point~$z\in A_{z'}$. Enfin, d'apr\`es les \'egalit\'es~\eqref{e.commutation}, si $(p_k)_{k\geq 0}$ est la suite d'entiers telle que $T_{g(z')}^{-p_k}\circ \widetilde f_{I,g(z')}^{n_k}(\widetilde g(\widetilde z))$ converge vers $\widetilde g(\widetilde z)$, alors la suite  $T_{z'}^{-p_k}\circ \widetilde f_{I,z'}^{n_k}(\widetilde z)$ converge vers $\widetilde z$. D'apr\`es le fait~\ref{f.nombre-rotation-7}, ceci montre que le nombre de rotation $\rho_{g(z')}(g(z))$ est bien d\'efini, et est \'egal \`a $\rho_{z'}(z)$.

Le cas o\`u $g$ renverse l'orientation se traite de m\^eme. Les seules diff\'erences sont que l'on a alors $\widetilde g\circ T_{z'} =T^{-1}_{g(z')}\circ \widetilde g$ et que c'est donc la suite $T_{z'}^{p_k}\circ \widetilde f_{I,z'}^{n_k}(\widetilde z)$ qui converge vers $\widetilde z$.
\end{proof}

\bigskip

\begin{proof}[Preuve de l'assertion~(2).]
Ce r\'esultat d\'ecoule de la propri\'et\'e (P1), du lemme de Franks~\ref{theo.lemme-de-Franks}, et du th\'eor\`eme ergodique de Birkhoff. Le point important de la preuve est le suivant~:  \'etant donn\'e $z'\in\mathrm{Fix}(f)$, le lemme de Franks permet de passer d'une propri\'et\'e concernant les nombres de rotation des points fixes de $f$ autour de $z'$ (la fonction $z\mapsto \mathrm{Enlace}_I(z,z')$ est born\'ee sur $\mathrm{Fix}(f)\setminus\{z'\}$) \`a une propri\'et\'e concernant les nombres de rotation de toutes les orbites autour de $z'$ (la fonction  $z\mapsto \mathrm{Enlace}_I(z,z')$ est int\'egrable sur $\mathrm{Rec}^+(f)\setminus\{z'\}$).

Fixons une mesure de probabilit\'e $\mu\in{\mathcal{M}}(f)$ et un point $z'\in\mathrm{Fix}(f)$. Nous appellerons \emph{disque libre} tout disque topologique ouvert disjoint de son image par $f$.  La fonction $\rho_{z'}$ est bien \'evidemment d\'efinie sur  $ \mathrm{Fix}(f)\setminus\{z'\}$ (voir le fait~\ref{f.nombre-rotation-1}). Il reste \`a prouver qu'elle est d\'efinie $\mu$-presque s\^urement sur $ \mathrm{Rec}^+(f)\setminus\mathrm{Fix}(f)$. Comme $\RR^2\setminus\mathrm{Fix}(f)$ est recouvert par les disques libres, il suffit de prouver que, si $U$ est un disque libre, alors $\rho_{z'}$ est d\'efinie $\mu$-presque partout sur $\mathrm{Rec}^+(f)\cap U$. 

Fixons donc un disque libre $U$. Puisque $U$ est ouvert, on peut consid\'erer le temps de premier retour $\tau_U(z)$ dans $U$ de tout point $z\in \mathrm{Rec}^+(f)\cap U$ et d\'efinir une application de premier retour 
\begin{align*}
F_U~: \mathrm{Rec}^+(f)\cap U&\to \mathrm{Rec}^+(f)\cap U\\
z&\mapsto f^{\tau_U(z)}(z).
\end{align*}
Pour tout $z\in \mathrm{Rec}^+(f)\cap U$, on peut alors d\'efinir la quantit\'e
$$\alpha_{U,z'}(z)=\sum_{0\leq \ell<\tau_U(z)} \mathrm{Enlace}_I(f^\ell(z),z')+\int_{\gamma_{U,z,z'}} d\theta,$$
o\`u on choisit un chemin $\gamma_{U,z}$ dans $U$ qui joint $F_U(z)$ \`a $z$ et o\`u on pose $\gamma_{U,z,z'}(t)=\gamma_{U,z}(t)-z'$. Puisque $U$ est simplement connexe, la quantit\'e pr\'ec\'edente ne d\'epend pas du choix de l'arc $\gamma_{U,z}$. 

\begin{remark}
\label{r.interpretation-alpha}
Pour tout couple de points distincts $w,w'$, notons comme dans l'introduction $\gamma_{I,w,w'}:[0,1]\to\RR^2\setminus\{0\}$ le chemin d\'efini par $\gamma_{I,w,w'}(t):=f_t(w)-f_t(w')$. En mettant bout \`a bout les d\'efinitions des quantit\'es  $\alpha_{U,z'}(z)$ et $\mathrm{Enlace}_I(w,z')$, on voit alors que $\alpha_{U,z'}(z)$ est \'egal \`a l'int\'egrale de la 1-forme d'angle polaire $d\theta$ le long du lacet
$$\Gamma_{I,U,z,z'}:=\gamma_{I,z,z'}\,\cdot\,\gamma_{I,f(z),z'}\,\cdot\,\cdots\,\cdot\,\gamma_{I,f^{\tau_U(z)-1}(z),z'}\,\cdot\,\gamma_{U,z,z'}.$$
Ceci montre en particulier que $\alpha_{U,z'}(z)$ est un entier. On peut \'egalement interpr\'eter la quantit\'e $\alpha_{U,z'}(z)$ de la fa\c con suivante. Fixons une composante connexe $\widetilde U$ de $\pi_{z'}^{-1}(U)$. Il s'agit d'un disque topologique ouvert. Tout point $z\in U$ a un unique relev\'e $\widetilde z\in\widetilde U$. Si $z$ appartient \`a $\mathrm{Rec}^+(f)\cap U$, alors $\alpha_{U,z'}(z)$ est l'entier $p$ tel que $(\widetilde f_{I,z'})^{\tau_U(z)}(\widetilde z)\in T_{z'}^p(\widetilde U)$. 
\end{remark}

Puisque $f$ v\'erifie la propri\'et\'e (P1), il existe un entier $M$ tel que  $\vert\mathrm{Enlace}_I(z,z')\vert\leq M$ pour tout $(z,z')\in(\mathrm{Fix}(f)\times\mathrm{Fix}(f))\setminus\Delta$. On a alors le lemme suivant~:
 
\begin{lem}
\label{lem.borne-ensemble-rotation}
L'image de la fonction $\alpha_{U,z'}/\tau_U$, d\'efinie sur $\mathrm{Rec}^+(f)\cap U$, est contenue dans un intervalle de longueur $2M+4$.
\end{lem}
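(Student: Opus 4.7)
Le plan est d'appliquer le lemme de Franks~\ref{theo.lemme-de-Franks} aux hom\'eomorphismes $\widetilde g_k:=T_{z'}^{-k}\widetilde f_{I,z'}$ de $\widetilde A_{z'}$, pour $k\in\ZZ$. L'interpr\'etation de $\alpha_{U,z'}(z)$ donn\'ee dans la remarque~\ref{r.interpretation-alpha} traduit directement les in\'egalit\'es sur le rapport $\alpha_{U,z'}/\tau_U$ en conditions d'intersection d'it\'er\'es requises par Franks. Je commencerais par fixer une composante connexe $\widetilde U$ de $\pi_{z'}^{-1}(U)$ et faire deux observations pr\'eliminaires. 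D'une part, $\widetilde U$ est libre pour chaque $\widetilde g_k$: puisque $U\cap f(U)=\emptyset$, l'image $\widetilde f_{I,z'}(\widetilde U)$ est disjointe de tous les translat\'es $T_{z'}^j(\widetilde U)$, d'o\`u $\widetilde g_k(\widetilde U)\cap\widetilde U=\emptyset$. D'autre part, d'apr\`es le fait~\ref{f.nombre-rotation-3}, tout point fixe de $\widetilde g_k$ se projette sur un point fixe $z_*$ de $f$ avec $\mathrm{Enlace}_I(z_*,z')=k$, de sorte que la propri\'et\'e~(P1) interdit tout point fixe de $\widetilde g_k$ d\`es que $|k|>M$.

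L'\'etape cl\'e est un raisonnement par l'absurde. Supposons qu'il existe $z_1,z_2\in\mathrm{Rec}^+(f)\cap U$ et un entier $k$ avec $|k|>M$ tels que $\alpha_{U,z'}(z_1)/\tau_U(z_1)<k<\alpha_{U,z'}(z_2)/\tau_U(z_2)$. En posant $q_i=\tau_U(z_i)$ et $p_i=\alpha_{U,z'}(z_i)$, la commutation de $T_{z'}$ avec $\widetilde f_{I,z'}$ combin\'ee \`a la remarque~\ref{r.interpretation-alpha} fournit
$$\widetilde g_k^{q_1}(\widetilde U)\cap T_{z'}^{p_1-kq_1}(\widetilde U)\neq\emptyset\quad\text{et}\quad \widetilde g_k^{q_2}(\widetilde U)\cap T_{z'}^{p_2-kq_2}(\widetilde U)\neq\emptyset,$$
avec $p_1-kq_1<0$ et $p_2-kq_2>0$. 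Les trois hypoth\`eses du lemme de Franks sont alors satisfaites, et celui-ci produit un point fixe de $\widetilde g_k$, en contradiction avec l'observation pr\'ec\'edente.

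Il en r\'esulte que tout entier strictement compris entre deux valeurs quelconques de $\alpha_{U,z'}/\tau_U$ appartient \`a $[-M,M]$, ce qui confine l'image de ce rapport dans un intervalle de longueur au plus $2M+4$ (la marge suppl\'ementaire servant \`a absorber les cas o\`u les bornes de l'image ne sont pas enti\`eres). L'obstacle principal n'est pas analytique mais organisationnel: il s'agit d'articuler proprement le va-et-vient entre le niveau combinatoire (nombres de rotation) et le niveau dynamique (intersections d'it\'er\'es dans le rev\^etement), en veillant notamment au choix correct des signes et \`a la commutation de $T_{z'}$ avec $\widetilde f_{I,z'}$. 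C'est pr\'ecis\'ement le r\^ole de la remarque~\ref{r.interpretation-alpha}, qui fait l'essentiel de ce travail.
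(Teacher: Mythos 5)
Votre preuve est correcte et suit essentiellement la m\^eme strat\'egie que celle de l'article~: on raisonne par l'absurde en appliquant le lemme de Franks au rel\`evement $T_{z'}^{-k}\widetilde f_{I,z'}$ (le disque $\widetilde U$ \'etant libre car $U$ l'est, et la remarque~\ref{r.interpretation-alpha} fournissant les intersections d'it\'er\'es avec les bons signes), puis on utilise la propri\'et\'e (P1) pour interdire les valeurs $\vert k\vert>M$, la conclusion sur la longueur de l'intervalle r\'esultant du m\^eme d\'ecompte des entiers de $[-M,M]$. La seule diff\'erence est cosm\'etique~: l'article compte directement les $2M+2$ entiers compris entre les deux quotients, alors que vous traitez chaque entier ``interdit'' s\'epar\'ement avant de conclure, ce qui revient au m\^eme.
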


\begin{proof}[Preuve du lemme] 
Raisonnons par l'absurde. Supposons qu'il existe $z^-$ et $z^+$ dans $ \mathrm{Rec}^+(f)\cap U$, tels que  
$$\frac{\alpha_{U,z'}(z^+)}{\tau_U(z^+)} -\frac{\alpha_{U,z'}(z^-)}{\tau_U(z^-)}>2M+4 .$$
Il existe alors au moins $2M+ 2$ entiers $k$ v\'erifiant 
\begin{equation}
\label{e.double-inegalite}
\frac{\alpha_{U,z'} (z^-)}{\tau_U (z^-)}\leq k\leq \frac{\alpha_{U,z'}(z^+)}{\tau_U (z^+)}.
\end{equation}
Soit $\widetilde U$ une composante connexe de $\pi_{z'}^{-1}(U)$. Notons $\widetilde z^-$ et $\widetilde z^+$ les rel\`evement de $z^-$ et $z^+$ situ\'es dans $\widetilde U$. Par ailleurs d\'efinition des fonctions $\tau_U(z^+)$ et $\alpha_{U,z'}$, on a alors
$$\left(\widetilde f_{I,z'}\right)^{\tau_U (z^-)}(\widetilde z^-)\in T_{z'}^{\alpha_{U,z'} (z^-)}(\widetilde U) \quad\mbox{et}\quad \left(\widetilde f_{I,z'}\right)^{\tau_U (z^+)}(\widetilde z^+)\in T_{z'}^{\alpha_{U,z'} (z^+)}(\widetilde U).$$
En particulier
$$\left(\widetilde f_{I,z'}\right)^{\tau_U (z^-)}(\widetilde U)\cap T_{z'}^{\alpha_{U,z'} (z^-)}(\widetilde U)\neq\emptyset \quad\mbox{et}\quad \left(\widetilde f_{I,z'}\right)^{\tau_U (z^+)}(\widetilde U)\in T_{z'}^{\alpha_{U,z'} (z^+)}(\widetilde U)\neq\emptyset.$$
Si $k$ v\'erifie la double in\'egalit\'e~\eqref{e.double-inegalite}, on peut appliquer le lemme de Franks au rel\`evement $\widetilde f_{I,z'}\circ T_{z'}^{-k}$ de $f\vert_{A_{z'}}$ et montrer l'existence d'un point fixe de $\widetilde f_{I,z'}\circ T_{z'}^{-k}$, c'est-\`a-dire l'existence d'un point fixe $z\in\mathrm{Fix}(f)\setminus\{z'\}$ tel que $\mathrm{Enlace}_I(z,z')=k$. Or par hypoth\`ese, il y a au moins $2M+2$ valeurs de $k$ qui v\'erifient la double in\'egalit\'e~\eqref{e.double-inegalite}. Ceci est en contradiction avec l'in\'egalit\'e $\vert\mathrm{Enlace}_I(z,z')\vert\leq M$.
\end{proof}

D'apr\`es le lemme de Kac, la fonction temps de premier retour $\tau_U$ est int\'egrable sur~$U$. On vient de prouver que la fonction $\alpha_{U,z'}/\tau_U$ est born\'ee sur $U$. On en d\'eduit que $\alpha_{U,z'}$ est \'egalement int\'egrable sur $U$. Le th\'eor\`eme ergodique de Birkhoff nous dit que les moyennes de Birkhoff
$$\frac{1}{m}\sum_{\ell=0}^{m-1} \tau_U(F_U^\ell(z))\quad\mathrm{ et }\quad \frac{1}{m}\sum_{\ell=0}^{m-1} \alpha_{U,z'}(F_U^\ell(z))$$
convergent pour $\mu$-presque tout point $z$ de $U$. Nous noterons $\tau_U^*(z)$ et $\alpha_{U,z'}^*(z)$ les limites de ces sommes de Birkhoff lorsqu'elles existent.

 Pour terminer la preuve, consid\'erons un point $z\in\mathrm{Rec}^+(f)\cap U$.  Si $(n_k)_{k\geq 0}$ est une suite d'entiers telle que $f^{n_k}(z)$ tend vers $z$, alors, pour tout $k$ assez grand, le point $f^{n_k}(z)$ est dans le disque $U$, et il existe donc un entier $m_k$ tel que 
$$n_k=\sum_{\ell=0}^{m_k-1} \tau_U(F_U^\ell(z))\quad \mathrm{ et }\quad\sum_{r=0}^{n_k-1}\mathrm{Enlace}(f^{r}(z),z')=\sum_{\ell=0}^{m_k-1} \alpha_{U,z'}(F_U^\ell(z)).$$
Ceci montre que le nombre de rotation $\rho_{z'}(z)$ est bien d\'efini d\`es que les limites $\tau_U^*(z)$ et $\alpha_{U,z'}^*(z)$ existent, et qu'on a alors $\rho_{z'}(z)=\tau_U^*(z)/\alpha_{U,z'}^*(z)$.  
Nous avons ainsi montr\'e que le nombre de rotation $\rho_{z'}(z)$ est bien d\'efini pour $\mu$-presque tout $z$ dans $U$. Comme expliqu\'e plus haut, ceci suffit \`a prouver que $\rho_{z'}(z)$ est bien d\'efini pour $\mu$-presque tout $z$ dans $\RR^2$~: l'assertion~(2) de la proposition~\ref{theo.main-2} est d\'emontr\'ee.
\end{proof}

\begin{remark}
\label{r.nombre-rotation-pas-borne}
Les arguments ci-dessus montrent que, pour tout point $z'\in \mathrm{Fix}(f)$, la fonction $z\mapsto\rho_{z'}(z)$ est born\'ee sur $\mbox{Fix}(f)\setminus \{z'\}$ et localement born\'ee sur $\mathrm{Rec}^+(f)\setminus\mathrm{Fix}(f)$. Dans l'appendice, nous construirons un exemple d'hom\'eomorphisme pour lequel la fonction $z\mapsto\rho_{z'}(z)$ n'est pas globalement born\'ee sur $\mathrm{Rec}^+(f)$.
\end{remark}

\bigskip

\begin{proof}[Preuve de l'assertion~(3).]
On garde les m\^emes notations que dans la preuve de l'assertion~(2). En particulier, si $U$ est un disque libre, et $z'$ un point fixe de $f$, on utilisera les fonctions $\tau_U$, $F_U$ et $\alpha_{U,z'}$ d\'efinies ci-dessus. On utilisera \'egalement la fonction
$$\widehat\alpha_{U,R}:=\mathop{\sup}_{z'\in \mathrm{Fix}(f), \|z'\|\geq R} \vert \alpha_{U,z'}\vert$$
d\'efinie pour tout $R>0$. Le c\oe ur de la preuve de l'assertion~(3) consiste \`a montrer que, pour tout disque libre $U$ et toute mesure $\nu\in\mathcal{M}(f)$, on a
$$\lim_{R\to\infty}\int_{U}  \widehat\alpha_{U,R}(z)\, d\nu(z)=0.$$
Nous en d\'eduirons facilement l'assertion~(3), en utilisant le th\'eor\`eme ergodique de Birkhoff et un petit raisonnement par l'absurde. La limite ci-dessus sera obtenue gr\^ace au th\'eor\`eme de convergence domin\'ee~: la propri\'et\'e~(P2) nous permettra de montrer que $\widehat\alpha_{U,R}(z)$ tend vers $0$, \`a $z$ fix\'e, quand $R$ tend vers l'infini~; ce fait et le lemme~\ref{lem.borne-ensemble-rotation} nous fourniront une domination de la fonction $z\mapsto\widehat\alpha_{U,R}(z)$ par une fonction int\'egrable.

\begin{lem}
\label{lem.alpha-tend-vers-0}
Pour toute mesure $\nu\in\mathcal{M}(f)$, et tout disque libre $U$, on a
$$\lim_{R\to\infty}\;\;\int_{U} \widehat\alpha_{U,R}(z)\, d\nu(z)=0.$$
\end{lem}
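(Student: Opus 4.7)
The plan is to apply the dominated convergence theorem to the sequence of nonnegative measurable functions $(\widehat\alpha_{U,R})_R$ on $(U\cap \mathrm{Rec}^+(f),\nu)$. We need two ingredients: (1) pointwise convergence $\widehat\alpha_{U,R}(z)\to 0$ as $R\to\infty$ for $\nu$-a.e.\ $z$, and (2) an $R$-independent, $\nu$-integrable majorant.

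For the majorant I shall combine Lemma~\ref{lem.borne-ensemble-rotation} with Kac's lemma. Fix any reference point $z_0\in U\cap\mathrm{Rec}^+(f)$. By the integral interpretation from Remark~\ref{r.interpretation-alpha}, the function $z'\mapsto \alpha_{U,z'}(z_0)$ is the winding number around the origin of the closed curve $\Gamma_{I,U,z_0,z'}$; it is locally constant on the complement of the compact set $K_{z_0}:=\gamma_{U,z_0}\cup\{f^\ell(z_0):0\leq\ell\leq\tau_U(z_0)-1\}$, because $\Gamma$ touches $0$ precisely when, by injectivity of each $f_t$, $z'$ lies in $K_{z_0}$. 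Since $U$ is free and $z_0$ is recurrent but non-fixed, $K_{z_0}$ is disjoint from $\mathrm{Fix}(f)$, and its complement in $\mathbb{R}^2$ is connected (a simple arc plus finitely many isolated points). Hence $|\alpha_{U,z'}(z_0)|$ equals a single integer $N_0$ for all $z'\in\mathrm{Fix}(f)$. Lemma~\ref{lem.borne-ensemble-rotation} then gives $|\alpha_{U,z'}(z)|\leq C\,\tau_U(z)$ with $C:=2M+4+N_0/\tau_U(z_0)$ uniformly in $z,z'$, and $\tau_U\in L^1(\nu)$ by Kac.

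For the pointwise convergence I will use the lift interpretation of Remark~\ref{r.interpretation-alpha}: $\alpha_{U,z'}(z)$ is the unique integer $p$ such that $\widetilde f_{I,z'}^{\tau_U(z)}(\widetilde z)\in T_{z'}^p(\widetilde U)$. Fix $z\in U\cap\mathrm{Rec}^+(f)$ and set $K_z^\ast:=\bigcup_{t\in[0,1]}f_t(K_z)$, a compact set depending only on $z$ and $I$. For $z'\in\mathrm{Fix}(f)$ with $|z'|>\max_{w\in K_z^\ast}|w|$, $z'$ lies outside a Euclidean disc $V$ containing $K_z^\ast$; this $V$ is simply connected in $A_{z'}$ and lifts homeomorphically to a unique $\widetilde V\subset\widetilde A_{z'}$ containing the chosen lift $\widetilde U$. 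Since each trajectory $t\mapsto f_t(f^\ell(z))$ stays in $V$, it lifts canonically to $\widetilde V$; concatenating the lifts shows that $\widetilde f_{I,z'}^{\tau_U(z)}(\widetilde z)$ is the canonical lift of $f^{\tau_U(z)}(z)\in U\subset V$ in $\widetilde V$, which lies in $\widetilde U$. This forces $p=0$, and since $\widehat\alpha_{U,R}(z)$ is integer-valued we obtain $\widehat\alpha_{U,R}(z)=0$ for every $R$ beyond a threshold depending on $z$.

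The main obstacle is the concatenation step, because $\widetilde f_{I,z'}$ is not defined by lifting trajectories of $I$ itself but of an isotopy $I'$ homotopic to $I$ that fixes $z'$; the perturbed trajectory $t\mapsto f_t(f^\ell(z))+(z'-f_t(z'))$ may escape $V$ if the loop $\gamma_{I,z'}$ is large. This is exactly where property~(P2) enters through the adaptedness of $I$, which forces $\mathrm{Tourne}_I(z')=0$ on $\mathrm{Fix}(f)\cap W$ for some neighborhood $W$ of infinity. Combined with a careful choice of the representative $I'$, this gives enough control on $\gamma_{I,z'}$ to keep the perturbed trajectories inside $V$ for $z'$ sufficiently far, completing the lift argument. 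Dominated convergence then yields the lemma.
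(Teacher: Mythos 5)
Your overall scheme is the paper's (dominated convergence, a majorant of the form $C\,\tau_U$ obtained from Lemma~\ref{lem.borne-ensemble-rotation} plus Kac, and pointwise vanishing of $\widehat\alpha_{U,R}$), but both key ingredients are defective as written. For the majorant, you treat $\Gamma_{I,U,z_0,z'}$ as a closed curve for \emph{every} $z'\notin K_{z_0}$ and deduce that $z'\mapsto\alpha_{U,z_0}$-type values are locally constant on the connected set $\RR^2\setminus K_{z_0}$, hence equal to a single integer $N_0$ on all of $\mathrm{Fix}(f)$. But the concatenation only closes up when $f(z')=z'$: the path $\gamma_{I,f^\ell(z_0),z'}$ ends at $f^{\ell+1}(z_0)-f(z')$ while the next one starts at $f^{\ell+1}(z_0)-z'$. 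For non-fixed $z'$ the quantity $\alpha_{U,z'}(z_0)$ is a real number depending continuously, not locally constantly, on $z'$, so connectedness of $\RR^2\setminus K_{z_0}$ yields nothing; and the conclusion itself is false: for a diffeomorphism as in Example~4 of the appendix, with $z_0$ recurrent on an invariant circle inside one ball $B_n$ and $U$ a small free disc around it, $\alpha_{U,z'}(z_0)$ is a nonzero integer for $z'=z_n$ and equals $0$ for every fixed point outside $B_n$. What you actually need is only $\sup_{z'\in\mathrm{Fix}(f)}|\alpha_{U,z'}(z_0)|<\infty$, and the paper obtains it from local constancy \emph{on} $\mathrm{Fix}(f)$ (integrality plus continuity in $z'$) together with the fact that $z'\mapsto\alpha_{U,z'}(z_0)$ vanishes on $\mathrm{Fix}(f)$ near infinity --- and that last fact is exactly where (P2) enters, so it cannot be short-circuited by a connectedness argument.

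The pointwise step has the gap you yourself flag, and your proposed repair does not work: adaptedness of $I$ (property (P2)) controls only the winding number $\mathrm{Tourne}_I(z')$, not the diameter of the loop $\gamma_{I,z'}$, which can be arbitrarily large however far $z'$ lies; hence no choice of the representative $I'$ fixing $z'$ is shown to (nor can in general) keep the translated trajectories $t\mapsto f_t(f^\ell(z))+z'-f_t(z')$ inside your disc $V$, and your derivation of $p=0$ by containment collapses. The correct completion is homotopical rather than metric, as in the paper: choose a Euclidean ball $B$ containing the origin, the trajectories $t\mapsto f_t(f^\ell(z))$ for $0\le\ell<\tau_U(z)$ and the arc $\gamma_{U,z}$, and take $z'\in\mathrm{Fix}(f)$ far enough that $\gamma_{I,z'}$ avoids $B$; contracting the first argument inside $B$ shows that $\Gamma_{I,U,z,z'}$ is freely homotopic in $\RR^2\setminus\{0\}$ to a loop whose winding number is $\tau_U(z)\,\mathrm{Tourne}_I(z')$, so $\alpha_{U,z'}(z)=0$ by adaptedness, with no containment of trajectories needed. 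Once this computation is in place it gives your pointwise limit for every $z$, and, applied to the reference point $z_0$ together with continuity of $z'\mapsto\alpha_{U,z'}(z_0)$ on $\mathrm{Fix}(f)$ and compactness of $\mathrm{Fix}(f)\cap\overline B_R$, it also yields the uniform bound $N_0$ you wanted; the rest of your dominated-convergence argument then coincides with the paper's proof.
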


\begin{proof}[Preuve du lemme]
On se donne un disque libre $U$, et une mesure $\nu\in\mathcal{M}(f)$.

\medskip

Commen\c cons par montrer que, pour tout $z\in\mathrm{Rec}^+(f)\cap U$ fix\'e, la fonction $z'\mapsto \alpha_{U,z'}(z)$  est localement constante sur $\mathrm{Fix}(f)$. Fixons un point $z\in \mathrm{Rec}^+(f)\cap U$, un chemin $\gamma_{U,z}$ joignant $F_U(z)$ \`a $z$ dans $U$, et un point $z'\in\mathrm{Fix}(f)$. Si $z''\in\mathrm{Fix}(f)$ est assez proche de $z'$, alors on a
$$\left\vert\sum_{0\leq l<\tau_U(z)} \mathrm{Enlace}_I(f^l(z),z')-\sum_{0\leq l<\tau_U(z)} \mathrm{Enlace}_I(f^l(z),z'')\right\vert<\frac{1}{2}$$
et
$$\left\vert\int_{\gamma_{U,z,z'}} d\theta - \int_{\gamma_{U,z,z''}} d\theta \right\vert <\frac{1}{2}.$$
Ces deux in\'egalit\'es impliquent $\left\vert\alpha_{U,z'}(z)-\alpha_{U,z''}(z)\right\vert<1$, et donc $\alpha_{U,z'}(z)=\alpha_{U,z''}(z)$ puisque $\alpha_{U,z'}(z)$ et $\alpha_{U,z''}(z)$ sont des entiers. Nous avons donc bien prouv\'e que la fonction $z'\mapsto \alpha_{U,z'}(z)$ est localement constante.

\medskip

Montrons maintenant que, pour tout $z\in\mathrm{Rec}^+(f)\cap U$ fix\'e, la fonction $z'\mapsto \alpha_{U,z'}(z)$  s'annule au voisinage de l'infini (\emph{i.e.} s'annule sur $\mathrm{Fix}(f)\cap W$, o\`u $W$ est un voisinage de l'infini).
Rappelons que, pour tout point $w\in\RR^2$, nous notons $\gamma_{I,w}:[0,1]\to \RR^2$ le chemin d\'efini par $\gamma_{I,w}(t)=f_t(w)$, et pour tout couple de points distincts $w,w'\in\RR^2$, nous notons $\gamma_{I,w,w'}:[0,1]\to \RR^2$ le chemin d\'efini par $\gamma_{I,w,w'}(t)=f_t(w)-f_t(w')$. Fixons un point $z\in \mathrm{Rec}^+(f)\cap U$, ainsi qu'un chemin $\gamma_{U,z}:[0,1]\to U$ joignant $F_U(z)$ \`a $z$ dans $U$. Pour tout $z'\in \RR^2$, notons $\gamma_{U,z,z'}:[0,1]\to \RR^2$ le chemin d\'efini par $\gamma_{U,z,z'}(t):=\gamma_{U,z}(t)-z'$. Pour tout $z'\in\mathrm{Fix}(f)$, l'entier $\alpha_{U,z'}(z)$ est l'int\'egrale de la 1-forme d'angle polaire $d\theta$ le long du lacet
$$\Gamma_{I,U,z,z'}=\gamma_{I,z,z'}\,\cdot\,\gamma_{I,f(z),z'}\,\cdot\,\cdots\,\cdot\,\gamma_{I,f^{\tau_U(z)-1}(z),z'}\,\cdot\,\gamma_{U,z,z'}$$
(voir la remarque~\ref{r.interpretation-alpha}). Si $z'\in\mathrm{Fix}(f)$ est suffisamment proche de l'infini, le lacet $\Gamma_{I,U,z,z'}$ est librement homotope dans $\RR^2\setminus\{0\}$ au lacet $(\gamma_{I,z'})^{\tau_U(z)}$. Pour le d\'emontrer, il suffit par exemple, de consid\'erer une boule euclidienne $B\subset \RR^2$ centr\'ee \`a l'origine, telle que le lacet $\Gamma_{I,U,z,z'}$ est contenu dans $B$, et de prendre $z'\in\mathrm{Fix}(f)$ suffisamment proche de l'infini pour que le lacet $\gamma_{I,z'}$ soit contenu dans le compl\'ementaire de $B$. Ainsi pour $z'\in\mathrm{Fix}(f)$ suffisamment proche de l'infini, l'entier $\alpha_{U,z'}(z)$ est alors \'egal \`a  l'int\'egrale de $d\theta$ le long de $(\gamma_{I,z'})^{\tau_U(z)}$. Mais cette derni\`ere int\'egrale est par d\'efinition \'egale \`a $\tau_{U}(z)\,\cdot\,\mathrm{Tourne}_I(z')$. Et l'entier $\mathrm{Tourne}_I(z')$ est nul pour tout $z'\in\mathrm{Fix}(f)$ suffisamment proche de l'infini, puisque $f$ v\'erifie (P2) et  $I$ est adapt\'ee. Ceci prouve  que, pour tout $z\in\mathrm{Rec}^+(f)\cap U$ fix\'e, la fonction $z'\mapsto \alpha_{U,z'}(z)$  s'annule au voisinage de l'infini. Autrement dit, pour tout  $z\in\mathrm{Rec}^+(f)\cap U$ fix\'e, la quantit\'e $\widehat\alpha_{U,R}(z)=\mathop{\sup}_{z'\in \mathrm{Fix}(f), \|z'\|\geq R} \vert \alpha_{U,z'}(z)\vert$ est nulle pour $R$ suffisamment grand.
 
 \medskip

Fixons $z_0\in\mathrm{Rec}^+(f)\cap U$. Nous avons montr\'e ci-dessus que la fonction $z'\mapsto \alpha_{U,z'}(z_0)$ est localement constante sur $\mathrm{Fix}(f)$ et nulle en dehors d'un compact~; cette fonction est donc born\'ee. Soit $M_0$ tel que pour tout $z'\in\mathrm{Fix}(f)$, on a
$$\frac{\vert\alpha_{U,z'}(z_0)\vert}{\tau_U(z_0)}\leq M_0.$$ 
D'apr\`es  le lemme~\ref{lem.borne-ensemble-rotation}, on en d\'eduit que pour tout $z\in\mathrm{Rec}^+(f)\cap U$ et tout $z'\in\mathrm{Fix}(f)$, on a  
$$\frac{\vert\alpha_{U,z'}(z)\vert}{\tau_U(z)}\leq M_0+2M+4.$$  
Par cons\'equent, pour tout $z'\in\mathrm{Fix}(f)$, la fonction $z\mapsto \vert\alpha_{U,z'}(z)\vert$ est domin\'ee par la fonction int\'egrable $z\mapsto (M_0+2M+4)\tau_U(z)$. Il en suit imm\'ediatemment que, pour tout $R>0$, la fonction $z\mapsto \widehat\alpha_{U,R}(z)$ est domin\'ee par la fonction int\'egrable $z\mapsto (M_0+2M+4)\tau_U(z)$.

\medskip

Nous avons montr\'e d'une part que, pour tout $z\in\mathrm{Rec}^+(f)\cap U$, la quantit\'e $\widehat\alpha_{U,R}(z)$ est nulle pour $R$ suffisamment grand, et d'autre part que, pour tout $R>0$, la fonction positive $z\mapsto \widehat\alpha_{U,R}(z)$ est domin\'ee par une fonction int\'egrable ind\'ependante de $R$. Nous pouvons donc appliquer le th\'eor\`eme de convergence domin\'ee de Lebesgue et obtenir la limite annonc\'ee. 
\end{proof}

\begin{lem}
\label{c.nombre-rotation-moyen-tend-vers-zero}
Pour toute mesure $\nu\in\mathcal{M}(f)$, tout disque libre $U$, et tout point $z'\in\mathrm{Fix}(f)$, on a
$$\int_{\bigcup_{k\geq 0} f^k(U)}\rho_{z'}(z)\,d\nu(z)=\int_{U} \alpha_{U,z'}(z)\, d\nu(z).$$
\end{lem}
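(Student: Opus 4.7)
The plan is to combine a Kakutani–Rohlin tower decomposition of $\bigcup_{k\geq 0}f^k(U)$ over the base $U$ with the first-return map $F_U$, together with the pointwise identity $\rho_{z'}=\alpha_{U,z'}^{*}/\tau_U^{*}$ on $U$ already obtained in the proof of assertion~(2). A preliminary observation is that $\rho_{z'}$ is $f$-invariant on the recurrent set where it is defined: if $f^{n_k}(z)\to z$, then the Birkhoff sums associated with $z$ and with $f(z)$ differ only by the telescoping term $\mathrm{Enlace}_I(f^{n_k}(z),z')-\mathrm{Enlace}_I(z,z')$, which stays bounded by continuity of $\mathrm{Enlace}_I$ off the diagonal and becomes negligible once divided by $n_k$; hence $\rho_{z'}\circ f=\rho_{z'}$ wherever defined.

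First, I would exploit the tower representation. By Poincar\'e recurrence, $\nu$-almost every $w\in\bigcup_{k\geq 0}f^k(U)$ admits a unique writing $w=f^j(z)$ with $z\in U\cap\mathrm{Rec}^+(f)$ and $0\leq j<\tau_U(z)$; equivalently, the sets $f^j(\{z\in U:\tau_U(z)>j\})$ for $j\geq 0$ form a measurable partition of $\bigcup_{k\geq 0}f^k(U)$ modulo $\nu$-null sets. Using $f$-invariance of $\nu$ to shift each layer back to $U$, and the $f$-invariance of $\rho_{z'}$ to trivialize the resulting sum, one obtains
\[
\int_{\bigcup_{k\geq 0}f^k(U)}\rho_{z'}\,d\nu\;=\;\int_U\tau_U\,\rho_{z'}\,d\nu.
\]

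Second, I would apply Birkhoff's ergodic theorem to the return map $F_U$ acting on $U$ equipped with the (automatically $F_U$-invariant) restriction of $\nu$. This is legitimate because $\tau_U$ is $L^1$ by Kac's lemma and $\alpha_{U,z'}$ is $L^1$ by Lemma~\ref{lem.borne-ensemble-rotation} combined with the integrability of $\tau_U$; that same lemma moreover provides the uniform bound $|\rho_{z'}|\leq M_0+2M+4$ on $U$, which removes any $L^1$ issue below. The proof of assertion~(2) yields the pointwise identity $\rho_{z'}(z)\,\tau_U^{*}(z)=\alpha_{U,z'}^{*}(z)$ for $\nu$-a.e.\ $z\in U$. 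Since $\rho_{z'}$ is $F_U$-invariant, $\rho_{z'}\tau_U^{*}$ is itself the $F_U$-Birkhoff average of $\rho_{z'}\tau_U$, and likewise for $\alpha_{U,z'}^{*}$ vs.\ $\alpha_{U,z'}$, whence
\[
\int_U\rho_{z'}\,\tau_U\,d\nu\;=\;\int_U\rho_{z'}\,\tau_U^{*}\,d\nu\;=\;\int_U\alpha_{U,z'}^{*}\,d\nu\;=\;\int_U\alpha_{U,z'}\,d\nu.
\]
Combining with the tower identity of the preceding paragraph gives the lemma.

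The computation is essentially routine; the only delicate point is book-keeping, so that the almost-everywhere identity $\rho_{z'}\tau_U^{*}=\alpha_{U,z'}^{*}$ from the proof of assertion~(2) can be integrated legitimately against $\nu$. All this reduces, via Poincar\'e recurrence and Kac's lemma, to the boundedness of $\rho_{z'}$ on $U$, which is a direct consequence of Lemma~\ref{lem.borne-ensemble-rotation}.
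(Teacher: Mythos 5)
Your proof is correct and follows essentially the same route as the paper: the chain $\int_{\bigcup_k f^k(U)}\rho_{z'}\,d\nu=\int_U\tau_U\,\rho_{z'}\,d\nu=\int_U\tau_U^{*}\,\rho_{z'}\,d\nu=\int_U\alpha_{U,z'}^{*}\,d\nu=\int_U\alpha_{U,z'}\,d\nu$, using Birkhoff for the return map $F_U$ and the identity $\rho_{z'}=\alpha_{U,z'}^{*}/\tau_U^{*}$ from assertion~(2). Your Kakutani tower decomposition and the verification that $\rho_{z'}$ is $f$-invariant merely make explicit what the paper summarizes as ``invariance de $\nu$ et de $\rho_{z'}$ sous l'action de $f$''.
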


\begin{proof}[Preuve du lemme]
On a 
$$\int_{\bigcup_{k\geq 0} f^k(U)}\rho_{z'}d\nu=\int_{U}\tau_U\rho_{z'}d\nu=\int_{U}\tau_U^*\rho_{z'}d\nu=\int_{U} \alpha_{U,z'}^*d\nu=\int_{U} \alpha_{U,z'}d\nu$$
o\`u $\tau_U^*$ et $\alpha_{U,z'}^*$ sont les limites des moyennes de Birkhoff des fonctions $\tau_U$ et $\alpha_{U,z'}$ (on rappelle que ces limites existent pour $\nu$ presque partout sur $U$~; voir la preuve de l'assertion~(2)). La premi\`ere \'egalit\'e ci-dessus d\'ecoule de l'invariance de la mesure $\nu$ et de la fonction $\rho_{z'}$ sous l'action de $f$. La deuxi\`eme et la derni\`ere d\'ecoulent du th\'eor\`eme ergodique de Birkhoff. Enfin, la troisi\`eme est une cons\'equence de l'\'egalit\'e $\rho_{z'}(z)=\alpha_{U,z'}^*(z)/\tau_U^*(z)$, que nous avons montr\'ee \`a la fin de la preuve de l'assertion~(2). 
\end{proof}

\begin{lem}
\label{c.nombre-rotation-moyen-tend-vers-zero-2}
Soit $(z'_n)$ est une suite de points dans $\mathrm{Fix}(f)$. Supposons que $\|z'_n\|\geq R$ pour tout $n\geq n_0$. Alors, pour toute mesure $\nu\in\mathcal{M}(f)$, tout disque libre $U$,  on a
$$\int_{\bigcup_{k\geq 0} f^k(U)}\mathop{\sup}_{n\geq n_0} \vert\rho_{z'_n}(z)\vert\,d\nu(z)\leq \int_{U} \widehat\alpha_{U,R}(z)\, d\nu(z).$$
\end{lem}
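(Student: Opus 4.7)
Mon plan est de combiner la formule de Kac, le th\'eor\`eme ergodique de Birkhoff appliqu\'e \`a l'application de premier retour $F_U$, et l'identit\'e ponctuelle $\rho_{z'}(z)\,\tau_U^*(z)=\alpha_{U,z'}^*(z)$ \'etablie \`a la fin de la preuve de l'assertion~(2). Je noterai $\Phi:=\sup_{n\geq n_0}|\rho_{z'_n}|$, qui est mesurable et $f$-invariante sur $\mathrm{Rec}^+(f)$ puisque chaque $|\rho_{z'_n}|$ l'est. La formule de Kac et le th\'eor\`eme de Birkhoff appliqu\'e \`a $\tau_U$ (contre la fonction $F_U$-invariante $\Phi$) fournissent alors, exactement comme dans la preuve du Lemme pr\'ec\'edent,
$$\int_{\bigcup_{k\geq 0}f^k(U)}\Phi\,d\nu=\int_U\tau_U\,\Phi\,d\nu=\int_U\tau_U^*\,\Phi\,d\nu.$$

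Le c\oe ur de la preuve sera la majoration ponctuelle suivante, valable $\nu$-presque partout sur $U$. Pour chaque $n\geq n_0$, l'hypoth\`ese $\|z'_n\|\geq R$ entra\^{\i}ne $|\alpha_{U,z'_n}(\cdot)|\leq\widehat\alpha_{U,R}(\cdot)$ en tout point de $U$; par passage aux moyennes de Birkhoff (et l'in\'egalit\'e triangulaire $|\alpha_{U,z'_n}^*|\leq(|\alpha_{U,z'_n}|)^*$) on en d\'eduit
$$\bigl|\alpha_{U,z'_n}^*(z)\bigr|\;\leq\;\widehat\alpha_{U,R}^*(z).$$
L'identit\'e $\tau_U^*(z)\,\rho_{z'_n}(z)=\alpha_{U,z'_n}^*(z)$ et $\tau_U^*(z)>0$ donnent alors $\tau_U^*(z)\,|\rho_{z'_n}(z)|\leq\widehat\alpha_{U,R}^*(z)$, puis en prenant le supr\'emum sur la famille d\'enombrable $\{n\geq n_0\}$,
$$\tau_U^*(z)\,\Phi(z)\;\leq\;\widehat\alpha_{U,R}^*(z).$$

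Il suffira alors d'int\'egrer cette in\'egalit\'e contre $\nu$ sur $U$ et d'appliquer une derni\`ere fois le th\'eor\`eme de Birkhoff pour \'ecrire $\int_U\widehat\alpha_{U,R}^*\,d\nu=\int_U\widehat\alpha_{U,R}\,d\nu$, ce qui est licite puisque $\widehat\alpha_{U,R}$ est int\'egrable (domin\'ee sur $U$ par la fonction $(M_0+2M+4)\,\tau_U$ d'apr\`es le Lemme~\ref{lem.borne-ensemble-rotation}, elle-m\^eme int\'egrable par Kac). L'obstacle principal sera la justification soigneuse de la validit\'e simultan\'ee, pour tous les $n\geq n_0$, de la majoration ponctuelle; mais celle-ci s'obtient sans peine en consid\'erant l'intersection d\'enombrable des ensembles de pleine mesure o\`u, pour chaque $n$ fix\'e, les moyennes de Birkhoff $\tau_U^*$ et $\alpha_{U,z'_n}^*$ existent et satisfont $\rho_{z'_n}=\alpha_{U,z'_n}^*/\tau_U^*$.
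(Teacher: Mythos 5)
Votre preuve est correcte et suit essentiellement le m\^eme chemin que celle de l'article~: d\'ecomposition de Kac et invariance de $\nu$ et de $\sup_{n\geq n_0}|\rho_{z'_n}|$, th\'eor\`eme de Birkhoff, identit\'e $\rho_{z'}=\alpha_{U,z'}^*/\tau_U^*$, domination par $\widehat\alpha_{U,R}$ et une derni\`ere application de Birkhoff \`a $\widehat\alpha_{U,R}$ (int\'egrable gr\^ace au lemme~\ref{lem.borne-ensemble-rotation} et \`a Kac). La seule diff\'erence, purement cosm\'etique, est que vous majorez chaque $|\alpha_{U,z'_n}^*|$ par $\widehat\alpha_{U,R}^*$ avant de prendre le supremum en $n$, l\`a o\`u l'article passe par la fonction interm\'ediaire $\bigl(\sup_{n\geq n_0}|\alpha_{U,z'_n}|\bigr)^*$~; ce sont les m\^emes in\'egalit\'es dans un ordre l\'eg\`erement permut\'e.
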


\begin{proof}[Preuve du lemme]
La preuve est similaire \`a celle du lemme pr\'ec\'edent. On a 
$$\int_{\bigcup_{k\geq 0} f^k(U)}\mathop{\sup}_{n\geq n_0} \vert\rho_{z'_n}\vert\,d\nu\quad\mathop{=}^{(a)}\quad\int_{U}\tau_U\mathop{\sup}_{n\geq n_0} \vert\rho_{z'_n}\vert\,d\nu\quad\mathop{=}^{(b)}\quad\int_{U}\tau_U^*\mathop{\sup}_{n\geq n_0} \vert\rho_{z'_n}\vert\,d\nu\mathop{=}^{(c)}\quad\int_{U} \sup_{n\geq n_0}\vert\alpha_{U,z'_n}^*\vert\,d\nu$$
$$\quad\quad\quad\quad\quad\mathop{\leq}^{(d)}\quad\int_{U} \left(\sup_{n\geq n_0}\vert\alpha_{U,z'_n}\vert\right)^*\,d\nu\quad\mathop{\leq}^{(e)}\quad\int_{U} \widehat\alpha_{U,R}^*d\nu\quad\mathop{=}^{(f)}\quad\int_{U} \widehat\alpha_{U,R}d\nu$$
o\`u $\phi^*$ d\'esigne la limite des moyennes de Birkhoff d'une fonction int\'egrable $\phi$. L'\'egalit\'e~(a) ci-dessus d\'ecoule de l'invariance de la mesure $\nu$ et de la fonction $\rho_{z'}$ sous l'action de $f$. Les \'egalit\'es~(b) et~(f) d\'ecoulent du th\'eor\`eme ergodique de Birkhoff. L'\'egalit\'e~(c) est une cons\'equence de l'\'egalit\'e $\rho_{z'}(z)=\alpha_{U,z'}^*(z)/\tau_U^*(z)$. L'in\'egalit\'e~(d) exprime le fait qu'une moyenne de Birkhoff du supremum d'une famille de fonction est sup\'erieure au supremum des moyennes de Birkhoff de ces fonctions. Enfin, l'in\'egalit\'e~(e) d\'ecoule de la d\'efinition de la fonction $\widehat\alpha_{U,R}$.
\end{proof}

Pour terminer la preuve de l'assertion (3) de la proposition~\ref{theo.main-2}, nous raisonnons maintenant par l'absurde. Supposons que l'assertion~(3) est fausse. Il existe alors deux constantes $a>0$ et $b>0$, ainsi qu'une suite de points $(z'_n)_{n\geq 0}$ dans $\mathrm{Fix}(f)$ qui tend vers l'infini telles que, si on note 
$$A_n=\{z\in \mathrm{Rec}^+(f)\;,\;\vert\rho_{z'_n}(z)\vert\geq a\},$$
alors $\mu(A_n)\geq b$ pour tout $n$.
L'ensemble $A=\bigcap_{n_0\geq 0}(\bigcup_{n\geq n_0} A_n)$ est alors invariant par $f$, et v\'erifie $\mu(A)\geq b$.  De plus, pour tout $z\in A$ et tout $n_0\geq 0$, on a $\sup_{n\geq n_0} \vert\rho_{z'_n}(z)\vert\geq a$. Posons $\nu=\mu\vert_A$. Puisque $I$ est adapt\'ee, pour tout point fixe $z$, on a $\mathrm{Enlace}_I(z,z'_n)=0$, si $n$ est assez grand. Ainsi $A$ ne contient aucun point fixe. Il existe donc un disque libre $U$ tel que $\nu(U)>0$ et 
 $$\int_{\cup_{k\geq 0} f^k(U)}\sup_{n\geq n_0}\vert\rho_{z'_n}\vert\,d\nu\geq a\nu\left(\bigcup_{k\geq 0} f^k(U)\right)\geq a\nu(U)$$
pour tout $n_0$. Cette in\'egalit\'e, le lemme~\ref{c.nombre-rotation-moyen-tend-vers-zero-2}, et le fait que la suite $(z'_n)$ tend vers l'infini, montrent que, pour tout $R\geq 0$, on a 
$$\int_U \widehat\alpha_{U,R}\,d\nu\geq a\nu(U).$$
Ceci contredit le lemme~\ref{lem.alpha-tend-vers-0}. Cette contradiction ach\`eve la preuve de l'assertion~(3).
\end{proof}

\bigskip
\begin{proof}[Preuve de l'assertion~(4).] 
Le point cl\'e de la preuve est le lemme suivant.

\begin{lem}
\label{lem.propriete-diffeo-C1}
Notons $\mathbb{S}^2$ la sph\`ere unit\'e de $\mathbb{R}^3$, et consid\'erons un $C^1$-diff\'eomorphisme $g$ de $\mathbb{S}^2$, qui fixe un point not\'e $\infty$. Pour tout point $z\in\mathbb{S}^2$, distinct de $\infty$ et de son point antipodal, notons $\gamma_z$ l'unique grand cercle qui passe par $\infty$ et $z$, et notons $\gamma^-_z$ (respectivement $\gamma^+_z$) le petit (respectivement grand) arc de $\gamma_z$ joignant $\infty$ \`a $z$. Alors, il existe un voisinage point\'e $W$ de $\infty$ dans $\mathbb{S}^2$, tel que pour tout point fixe $z\in W$ on a $g(\gamma^-_z)\cap \gamma^+_z=\{z,\infty\}$.
\end{lem}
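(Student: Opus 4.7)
Le plan est de passer en coordonn\'ees locales au voisinage de $\infty$ en utilisant la projection st\'er\'eographique $\sigma:\SS^2\setminus\{-\infty\}\to\RR^2$ de p\^ole le point antipodal $-\infty$ de $\infty$, normalis\'ee par $\sigma(\infty)=0$. Comme $\sigma$ est conforme et envoie tout cercle de $\SS^2$ passant par son p\^ole sur une droite de $\RR^2$, chaque grand cercle $\gamma_z$ (qui contient \`a la fois $\infty$ et $-\infty$) devient la droite vectorielle $L_\zeta:=\RR\zeta$, o\`u $\zeta:=\sigma(z)$~; le petit arc $\gamma^-_z$ correspond alors au segment euclidien $[0,\zeta]\subset L_\zeta$, et le grand arc $\gamma^+_z$ au compl\'ementaire $L_\zeta\setminus(0,\zeta)$. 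L'application $h:=\sigma\circ g\circ\sigma^{-1}$ est un $C^1$-diff\'eomorphisme local au voisinage de $0$ dans $\RR^2$, fixant $0$, de diff\'erentielle $A:=Dh_0$.

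Le c\oe ur de la preuve consistera \`a \'etablir que, pour tout point fixe $\zeta$ de $h$ assez proche de $0$, la fonction $q_\zeta:[0,1]\to\RR$ d\'efinie par $q_\zeta(s):=\langle h(s\zeta),\zeta\rangle/|\zeta|^2$ est strictement croissante. Cette fonction v\'erifie $q_\zeta(0)=0$, $q_\zeta(1)=1$, et l'on a l'\'equivalence $h(s\zeta)\in L_\zeta\iff h(s\zeta)=q_\zeta(s)\zeta$. Sa d\'eriv\'ee vaut $q_\zeta'(s)=\langle Dh(s\zeta)\cdot\zeta,\zeta\rangle/|\zeta|^2$, que je d\'ecomposerai en
$$q_\zeta'(s)=1+\frac{\langle A\zeta-\zeta,\zeta\rangle}{|\zeta|^2}+\frac{\langle (Dh(s\zeta)-A)\cdot\zeta,\zeta\rangle}{|\zeta|^2}.$$
Le second terme est major\'e par $|A\zeta-\zeta|/|\zeta|$, arbitrairement petit pour $\zeta$ point fixe proche de $0$ car $h(\zeta)=A\zeta+o(|\zeta|)$ combin\'e \`a $h(\zeta)=\zeta$ entra\^ine $A\zeta-\zeta=o(|\zeta|)$. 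Le troisi\`eme est major\'e par $\|Dh(s\zeta)-A\|$, uniform\'ement petit en $s\in[0,1]$ par continuit\'e de $Dh$ en $0$. Il existera donc un seuil $\delta_0>0$, ind\'ependant du point fixe $\zeta$, tel que $q_\zeta'(s)>0$ pour tout $s\in[0,1]$ d\`es que $0<|\zeta|<\delta_0$.

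La conclusion suivra alors imm\'ediatement~: la stricte croissance de $q_\zeta$ donne $q_\zeta([0,1])=[0,1]$, donc tout point de $h([0,\zeta])\cap L_\zeta$ est de la forme $q_\zeta(s)\zeta$ avec $q_\zeta(s)\in[0,1]$, c'est-\`a-dire appartient au segment $[0,\zeta]$~; il suit $h([0,\zeta])\cap (L_\zeta\setminus(0,\zeta))=\{0,\zeta\}$, et en appliquant $\sigma^{-1}$ on obtient l'\'egalit\'e voulue pour tout point fixe $z$ dans le voisinage point\'e $W:=\sigma^{-1}(\{\zeta\in\RR^2\,:\,0<|\zeta|<\delta_0\})$ de $\infty$. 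Le point d\'elicat \`a surveiller sera pr\'ecis\'ement l'uniformit\'e en $\zeta$ des deux estim\'ees ci-dessus, afin qu'un m\^eme voisinage $W$ convienne pour \emph{tous} les points fixes qu'il contient~; celle-ci repose sur le caract\`ere uniforme en $v$ (sur un voisinage de $0$) du reste dans le d\'eveloppement $C^1$ de $h$ autour de $0$.
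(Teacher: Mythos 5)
Votre preuve est correcte, mais elle suit une route r\'eellement diff\'erente de celle du texte. Le papier raisonne par l'absurde~: il consid\`ere une suite de points fixes $z_n\to\infty$ et des points $z'_n\in\gamma^-_{z_n}\setminus\{z_n,\infty\}$ avec $g(z'_n)\in\gamma^+_{z_n}$, puis montre qu'une valeur d'adh\'erence $v$ des vecteurs unitaires tangents en $\infty$ aux arcs $\gamma^-_{z_n}$ serait \`a la fois dans l'espace propre de la valeur propre $1$ de $Dg(\infty)$ (car les deux extr\'emit\'es de $\gamma^-_{z_n}$ sont fixes et la longueur de l'arc tend vers $0$) et dans un espace propre associ\'e \`a une valeur propre n\'egative ou nulle (selon que $g(z'_n)$ s'accumule, sur le grand arc, pr\`es de $\infty$ ou pr\`es de $z_n$), d'o\`u contradiction. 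Vous donnez au contraire un argument direct et quantitatif en carte st\'er\'eographique~: la croissance stricte de la composante de $h(s\zeta)$ le long de $\zeta$, via la d\'ecomposition de $q_\zeta'(s)$ en $1$ plus deux termes, le premier contr\^ol\'e uniform\'ement pour les points fixes $\zeta$ proches de $0$ gr\^ace \`a la diff\'erentiabilit\'e de $h$ en $0$ (qui donne $A\zeta-\zeta=o(|\zeta|)$ sur les points fixes), le second par la continuit\'e de $Dh$ en $0$~; ces deux ingr\'edients sont exactement ceux qu'utilise aussi le papier, mais votre preuve \'evite l'extraction de sous-suites et la discussion de cas, fournit un $\delta_0$ effectif valable pour tous les points fixes \`a la fois, et donne m\^eme un peu plus (l'image $g(\gamma^-_z)$ ne rencontre le grand cercle $\gamma_z$ tout entier que dans $\gamma^-_z$). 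Le seul d\'etail \`a expliciter est le choix de $\delta_0$ assez petit pour que $h$ soit d\'efinie sur la boule ferm\'ee de rayon $\delta_0$, c'est-\`a-dire pour que $g(\gamma^-_z)$ \'evite le p\^ole $-\infty$ de la projection (point qui appartient \`a $\gamma^+_z$), afin que le calcul en carte capture bien toute l'intersection avec $\gamma^+_z$~; cela r\'esulte imm\'ediatement de $g(\infty)=\infty$ et de la continuit\'e de $g$, donc ce n'est pas une lacune s\'erieuse.
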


\begin{proof}[Preuve du lemme] 
Raisonnons par l'absurde. Dire que la conclusion n'est pas v\'erifi\'ee signifie qu'il existe une suite de points fixes $(z_n)_{n\geq 0}$ tous distincts de $\infty$ qui converge vers $\infty$ et une suite de points $(z'_n)_{n\geq 0}$ tels que $z'_n\in \gamma^-_{z_n}\setminus\{z_n,\infty\}$ et  $g(z'_n)\in\gamma^+_{z_n}$.  

Notons $Dg(z)$ la diff\'erentielle de $g$ en un point $z\in\SS^2$. Notons $v_n\in T_\infty{\mathbb S}^2$ le vecteur unitaire tangent \`a l'arc $\gamma^-_{z_n}$ au point $\infty$ qui pointe vers $z_n$.  Quitte \`a extraire une sous-suite, on peut toujours supposer que la suite $(v_n)_{n\geq 0}$ est convergente~; on note $v$ sa limite. L'application $g$ est diff\'erentiable au point $\infty$, et la quantit\'e $\|Dg(\infty).v_n-v_n\|$ doit tendre vers $0$ quand $n$ tend vers l'infini, puisque les deux extr\'emit\'es $\infty$ et $z_n$ de l'arc $\gamma^-_{z_n}$ sont des points fixes de $g$, et puisque la longueur de l'arc $\gamma^-_{z_n}$ tend vers $0$. Par suite, le vecteur $v$ est dans l'espace propre associ\'e \`a la valeur propre $1$ de la diff\'erentielle de $g$ au point $\infty$.

Notons maintenant $u_n\in T_{z_n}{\mathbb S}^2$ le vecteur unitaire tangent \`a l'arc $\gamma^-_{z_n}$  au point $z_n$ qui pointe vers $z_n'$. On remarque que la suite $(z_n,u_n)$ converge vers $(\infty,-v)$ dans $T{\mathbb S}^2$. On distingue alors deux cas~: 

\smallskip

\noindent \textit{Premier cas.} Quitte \`a extraire une sous-suite, la longueur du sous-arc de $\gamma_{z_n}^+$ joignant $\infty$ \`a $g(z_n')$ tend vers $0$.

\smallskip

\noindent \textit{Second cas.}  Quitte \`a extraire une sous-suite, la longueur du sous-arc de $\gamma_{z_n}^+$ joignant $z_n$ \`a $g(z_n')$ tend vers $0$.

\smallskip

\noindent Soit $D$ un disque dans $\SS^2$ centr\'e en $\infty$. Dans le premier cas, comme les points $z'_n$ et $g(z'_n)$ sont situ\'es de part et d'autre du point $\infty$ sur $\gamma_{z_n}\cap D$, on voit que l'angle entre $v_n$ et $Dg(\infty).v_n$ doit tendre vers $\pi$ quand $n\to \infty$. Par cons\'equent, le vecteur $-v$ doit \^etre dans l'espace propre associ\'e \`a une valeur propre n\'egative ou nulle de la diff\'erentielle de $g$ au point $\infty$. Dans le second cas, comme les points $z'_n$ et $g(z'_n)$ sont situ\'es de part et d'autre du point $z_n$ sur $\gamma_{z_n}\cap D$, on voit que l'angle entre $u_n$ et $Dg(z_n).u_n$ doit tendre vers $\pi$ quand $n$ tend vers l'infini. Comme la diff\'erentielle de $g$ est continue, ceci implique \`a nouveau que le vecteur $v$ doit \^etre dans l'espace propre associ\'e \`a une valeur propre n\'egative ou nulle de la diff\'erentielle de $g$ au point $\infty$. 

Nous avons ainsi montr\'e que le vecteur $v$ est \`a la fois dans l'espace propre associ\'e \`a la valeur propre $1$ et dans un espace propre associ\'e \`a une valeur propre n\'egative ou nulle de la diff\'erentielle de $g$ au point $\infty$, ce qui constitue la contradiction recherch\'ee.
\end{proof}

Supposons donc que $f$ s'\'etend en un $C^1$-diff\'eomorphisme $\bar f$ de $\mathbb{S}^2\simeq\mathbb{R}^2\sqcup\{\infty\}$. Fixons $q\geq 1$, et appliquons le lemme~\ref{lem.propriete-diffeo-C1} au diff\'eomorphisme $\bar f^q$. Si $z'$ est un point fixe de $f^q$ suffisamment proche de $\infty$, le lemme nous fournit deux arcs $\gamma^-$ et $\gamma^+$ dans l'anneau $A_{z'}=\mathbb{S}^2\setminus\{\infty,z'\}$, joignant un bout \`a l'autre de l'anneau, disjoints, et tels que $f^q(\gamma^-)\cap \gamma^+=\emptyset$. Rappelons que $\pi_{z'}:\widetilde A_{z'}\to A_{z'}$ d\'esigne le rev\^etement universel de l'anneau $A_{z'}$, que $\widetilde f_{I,z'}$ d\'esigne le relev\'e de $f\vert_{A_{z'}}$ \`a $\widetilde A_{z'}$ tel que $I$ se rel\`eve en une isotopie de l'identit\'e \`a $\widetilde f_{I,z'}$, et que $T_{z'}$ d\'esigne le g\'en\'erateur du groupe des automorphismes du rev\^etement  $\pi_{z'}:\widetilde A_{z'}\to A_{z'}$ naturellement d\'efini par l'orientation du bord d'un disque centr\'e en $z'$. Choisissons un relev\'e $\widetilde \gamma^-$ de l'arc $\gamma^-$ dans $\widetilde A_{z'}$ (autrement dit, $\widetilde \gamma^-$  est une composante connexe de $\pi_{z'}^{-1}(\gamma^-)$). Alors l'arc $\widetilde f_{I,z'}^q(\widetilde \gamma^-)$ ne rencontre aucune composante connexe de $\pi^{-1}_{z'}(\gamma^+)$, et d'autre part, il rencontre au plus un translat\'e $T_{z'}^k(\widetilde \gamma^-)$ de $\widetilde \gamma^-$. Puisque $\widetilde f_{I,z'}$ a des points fixes (les relev\'es des points fixes proches de l'infini), l'arc $\widetilde f_{I,z'}^q(\widetilde \gamma^-)$ ne peut pas \^etre \`a droite de l'arc $T_{z'}(\widetilde \gamma^-)$. Il est donc situ\'e \`a gauche de l'arc $T_{z'}^2(\widetilde \gamma^-)$. De m\^eme il est situ\'e \`a droite de l'arc $T_{z'}^{-2}(\widetilde \gamma^-)$. Comme $\widetilde f_{I,z'}^q$ et $T_{z'}$ commutent, ceci implique \'evidemment que l'arc $\widetilde f_{I,z'}^q(T_{z'}(\widetilde \gamma^-))$ est \`a gauche de l'arc $T_{z'}^3(\widetilde \gamma^-)$ et \`a droite de l'arc $T_{z'}^{-1}(\widetilde \gamma^-)$. Consid\'erons maintenant un point $z\in\mathrm{Rec}^+(f)\setminus\{z'\}$ tel que le nombre de rotation $\rho_{z'}(z)$ est bien d\'efini. Il existe un (unique) relev\'e $\widetilde z$ de ce point dans $\widetilde A_{z'}$ qui est situ\'e entre les arcs $\widetilde \gamma^-$ et $T_{z'}(\widetilde \gamma^-)$. De ce qui pr\'ec\`ede, on d\'eduit donc que le point $\widetilde f_{I,z'}^q(\widetilde z)$ est situ\'e entre $T_{z'}^{-2}(\widetilde \gamma^-)$ et $T_{z'}^3(\widetilde \gamma^-)$. Par r\'ecurrence, on en d\'eduit que le point $\widetilde f_{I,z'}^{nq}(\widetilde z)$ est situ\'e  entre $T_{z'}^{-2n}(\widetilde \gamma^-)$ et $T_{z'}^{3n}(\widetilde \gamma^-)$ pour tout $n\geq 0$. En utilisant la caract\'erisation du nombre de rotation $\rho_{z'}(z)$ via le relev\'e $\widetilde f_{I,z'}$ (fait~\ref{f.nombre-rotation-7}), on en d\'eduit que $\vert \rho_{z'}(z)\vert\leq 3/q$. Comme on peut choisir l'entier $q$ aussi grand que l'on veut, ceci d\'emontre l'assertion~(4).
\end{proof}

\bigskip

\begin{proof}[Preuve de l'assertion~(5).] 
Soit $\mu$ une mesure bor\'elienne finie sur $\mathbb{R}^2$, invariante par $f$, dont le support n'est pas contenu dans l'ensemble des points fixes de $f$.

\bigskip

La preuve de l'assertion~(5) repose sur le th\'eor\`eme~\ref{theo.Brouwer-feuillete-equivariant}. Ce th\'eor\`eme fournira un feuilletage singulier orient\'e $\mathcal{F}$ sur $\mathbb{S}^2$ tel que, pour tout $z\in\mathbb{S}^2$ qui n'est pas une singularit\'e de $\mathcal{F}$, le chemin $\gamma_{I,z}:t\mapsto f_t(z)$ est homotope \`a un chemin positivement transverse \`a $\mathcal{F}$. De mani\`ere informelle, l'existence d'un tel feuilletage signifie  que, pour toute singularit\'e $z'$ de $\mathcal F$, le chemin $\gamma_{I,z}$ tourne autour de $z'$, et le sens de rotation ne d\'epend pas du point $z$. Ceci nous permettra de trouver un point $z'\in\mathrm{Fix}(f)$, et un disque libre $V$ tel que $\mu(V)>0$ et tel que $\alpha_{V,z'}(z)$ est strictement positif pour tout point $z\in (V\cap \mathrm{Rec}^+(f))\setminus X$, ou strictement n\'egatif pour tout $z\in (V\cap \mathrm{Rec}^+(f))\setminus X$. Nous conclurons alors via le lemme~\ref{c.nombre-rotation-moyen-tend-vers-zero}. 

Dans les faits, la d\'emonstration sera assez technique, et cela pour deux raisons au moins. La premi\`ere est que nous n'avons fait pratiquement aucune hypoth\`ese sur la mesure $\mu$. La seule chose que nous savons, c'est que le support de $\mu$ n'est pas contenu dans l'ensemble des points fixes de $\mathcal{F}$. La preuve aurait \'et\'e nettement simplifi\'ee si nous avions suppos\'e par exemple que $\mu$ chargeait tous les ouverts de $\mathbb{S}^2$, ou que $\mu$ \'etait ergodique. La seconde est que le th\'eor\`eme~\ref{theo.Brouwer-feuillete-equivariant}, que nous appliquerons sur $\mathbb{R}^2\setminus\{z'\}$ pour un certain  $z'\in\mathrm{Fix}(f)$, va nous fournir une isotopie de l'identit\'e \`a $f$ sur $\mathbb{R}^2\setminus(\{z'\}\cup X)$ o\`u $X$ est un ferm\'e de $\mathrm{Fix}(f)\setminus\{z'\}$ dont nous ne savons rien. Il se pourrait par exemple que $X$ disconnecte $\mathbb{R}^2\setminus\{z'\}$, ce qui nous causera quelques difficult\'es. 

\bigskip

Avant de commencer la preuve de l'assertion~(5) proprement dite, nous avons besoin de faire quelques rappels sur les feuilletages de la sph\`ere $\mathbb{S}^2$. Soit $\mathcal{F}$ un feuilletage singulier orient\'e de la sph\`ere $\mathbb{S}^2$. Comme $\mathcal{F}$ est orient\'e, on peut parler des ensembles $\alpha$-limite et $\omega$-limite d'une feuille $\lambda$~; nous noterons  $\alpha(\lambda)$ et $\omega(\lambda)$ ces ensembles. Consid\'erons une feuille $\lambda$ de $\mathcal{F}$ qui n'est pas ferm\'ee. Alors $\omega(\lambda)$ est une partie de $\mathbb{S}^2$ non-vide, ferm\'ee, connexe, disjointe de $\lambda$, et satur\'ee par $\mathcal{F}$ (ceci signifie que toute feuille de $\mathcal{F}$ qui rencontre $\omega(\lambda)$ est contenue dans $\omega(\lambda)$). Nous noterons $\widehat \omega(\lambda)$ le compl\'ementaire de la composante connexe de $\mathbb{S}^2\setminus \omega(\lambda)$ qui contient $\lambda$. Les arguments de la preuve du th\'eor\`eme de Poincar\'e-Bendixon montrent qu'on a les propri\'et\'es suivantes~:

\medskip

\noindent \textbf{(F1)} L'ensemble  $\widehat \omega(\lambda)$ est une partie de $\mathbb{S}^2$ compacte, connexe, pleine (c'est-\`a-dire de compl\'ementaire connexe), satur\'ee par $\mathcal{F}$, qui contient au moins une singularit\'e de $\mathcal{F}$. La feuille $\lambda$ s'accumule sur chaque point de la fronti\`ere de $\widehat \omega(\lambda)$. 

\medskip

\noindent \textbf{(F2)} Si $\widehat\omega(\lambda)$ n'est pas r\'eduit \`a une singularit\'e de $\mathcal{F}$, alors on a l'alternative suivante~: soit tout arc positivement transverse \`a $\mathcal{F}$ issu d'un point de $\widehat \omega(\lambda)$ est inclus dans $\widehat \omega(\lambda)$, soit tout arc positivement transverse \`a $\mathcal{F}$ aboutissant \`a un point de  $\widehat \omega(\lambda)$ est inclus dans $\widehat \omega(\lambda)$. 

\medskip

\noindent Supposons maintenant que les singularit\'es de $\mathcal{F}$ sont des points fixes de $f$ et que, pour tout point $z\in\mathbb{S}^2\setminus \mathrm{Sing}(\mathcal{F})$, le chemin $\gamma_{I,z}:t\mapsto f_t(z)$ soit homotope, \`a extr\'emit\'es fix\'ees, relativement aux singularit\'es de $\mathcal{F}$, \`a un arc positivement transverse \`a $\mathcal{F}$. De la propri\'et\'e~(F2), on d\'eduit~:

\medskip

\noindent \textbf{(F3)} L'ensemble $\widehat \omega(\lambda)$  est positivement ou n\'egativement invariant par $f$. 

\medskip

\noindent On d\'efinit l'ensemble $\widehat\alpha(\lambda)$ de fa\c con similaire \`a $\widehat\omega(\lambda)$, et les propri\'et\'es (F1), (F2), (F3) restent bien s\^ur valables si on y remplace $\widehat\omega(\lambda)$ par  $\widehat\alpha(\lambda)$.  

\bigskip

Nous d\'ebutons maintenant la preuve de l'assertion~(5). Le support de la mesure $\mu$ n'est pas inclus dans l'ensemble des points fixes de $f$. Comme $\mu$ presque tout point est positivement r\'ecurrent pour $f$, on peut donc trouver un point $z_0\in (\mathrm{Supp}(\mu)\cap \mathrm{Rec}^+(f))\setminus\mathrm{Fix}(f)$. Consid\'erons un point $z'\in\mathrm{Fix}(f)$. Quitte \`a remplacer $I$ par une autre isotopie adapt\'ee, nous pouvons supposer que $I$ fixe le point $z'$. Appliquons alors le th\'eor\`eme~\ref{theo.Brouwer-feuillete-equivariant} \`a l'anneau $M:=A_{z'}=\mathbb{R}^2\setminus\{z'\}$ et \`a  l'isotopie $I\vert_{A_{z'}}$. Ce th\'eor\`eme nous fournit une partie ferm\'ee $X$ de $\mathrm{Fix}(f)\setminus\{z'\}$, une isotopie $I'=(f'_t)_{t\in [0,1]}$ joignant l'identit\'e \`a $f\vert_{A_{z'}\setminus X}$ dans $\mathrm{Homeo}(A_{z'}\setminus X)$ et un feuilletage (non-singulier) $\mathcal{F}$ sur $A_{z'}\setminus X$. On posera alors $\widehat{X}=X\cup\{z',\infty\}$, et on verra $\mathcal{F}$ comme un feuilletage singulier sur $\mathbb{S}^2=A_{z'}\sqcup\{z',\infty\}$, ayant $\widehat{X}$ comme ensemble de singularit\'es. Remarquons que $\infty$ n'est pas un point isol\'e de $\widehat{X}$. En effet, dans le cas contraire, la trajectoire $\gamma_{I',z}$ d'un point fixe proche de l'infini serait homotope \`a z\'ero dans $A_{z'}\setminus X$. 

Le point $z_0$ n'appartient pas \`a $\widehat X$, puisqu'il n'est pas dans $\mathrm{Fix}(f)$. Comme $z_0$ est positivement r\'ecurrent pour $f$, il appartient \`a un lacet\footnote{Ce lacet est obtenu en choisissant un entier $q$ tel que $f^q(z_0)$ est tr\`es proche de $z_0$, et en perturbant le lacet  obtenu en concat\'enant des chemins positivement transverses \`a $\mathcal{F}$ homotopes \`a $\gamma_{I,z_0}, \gamma_{I,f(z_0)},Ê\dots,\gamma_{I,f^{q-1}(z_0)}$ et un petit arc joignant  $f^q(z_0)$ \`a $z_0$. Voir~\cite{LeCalvez2005} pour les d\'etails.} qui est positivement transverse \`a $\mathcal{F}$. On en d\'eduit d'une part que la feuille $\lambda_0$ de $\mathcal{F}$ qui contient $z_0$ n'est pas ferm\'ee mais \'egalement que $\widehat \alpha(\lambda_0)$ et $\widehat \omega(\lambda_0)$ sont disjoints. Notons $A=\mathbb{S}^2\setminus (\widehat \alpha(\lambda_0)\sqcup\widehat \omega(\lambda_0))$. La propri\'et\'e~(F1) montre que $A$ est un anneau ouvert, et $\lambda_0$ va d'un bout \`a l'autre de cet anneau. Puisque $\widehat \alpha(\lambda_0)$ et $\widehat \omega(\lambda_0)$  sont positivement ou n\'egativement invariants par $f$ (propri\'et\'e~(F3)), on sait que la mesure $\nu:=\mu\vert_A$ est invariante par $f$. Nous allons envisager cinq cas suivant la position des points $\infty$ et $z'$~:

\medskip
\begin{tabular}{lll}
1. $\infty\in\widehat\omega(\lambda_0)$ et $z'\in\widehat\alpha(\lambda_0)$~; & &1bis. $\infty\in\widehat\alpha(\lambda_0)$ et $z'\in\widehat\omega(\lambda_0)$~;\\
2.  $\infty\in\widehat\omega(\lambda_0)$ et $z'\not\in\widehat\alpha(\lambda_0)$~; & & 2bis. $\infty\in\widehat\alpha(\lambda_0)$ et $z'\not\in\widehat\omega(\lambda_0)$~;\\
3. $\infty\not\in(\widehat \alpha(\lambda_0)\sqcup\widehat \omega(\lambda_0))$.
\end{tabular}

\bigskip

Commen\c cons par le cas le plus simple~: celui o\`u $\infty\in\widehat\omega(\lambda_0)$ et $z'\in\widehat\alpha(\lambda_0)$.  Nous allons construire un disque libre $V$ tel que $\mu(V)>0$ et tel que $\alpha_{V,z'}(z)>0$ pour tout $z\in V$. 

Rappelons que $\pi_{z'}:\widetilde A_{z'}\to A_{z'}$ d\'esigne le rev\^etement universel de l'anneau $A_{z'}=\mathbb{S}^2\setminus\{\infty,z'\}$. Notons $\widetilde{\mathcal{F}}$ le feuilletage singulier orient\'e de $\widetilde A_{z'}$ obtenu en relevant $\mathcal{F}$. Remarquons que $A$ est un sous-anneau essentiel de l'anneau $A_{z'}$~; par suite, $\widetilde A:=\pi_{z'}^{-1}(A)$ est une bande dans $\widetilde A_{z'}$. Fixons alors un relev\'e $\widetilde z_0$ de $z_0$ dans $\widetilde A_{z'}$, et notons $\widetilde\lambda_0$ la feuille de $\widetilde{\mathcal{F}}$ passant par $\widetilde z_0$. Remarquons que $\widetilde\lambda_0$ est une droite topologique orient\'ee proprement plong\'ee dans la bande $\widetilde A$. En effet, c'est un relev\'e de la feuille $\lambda_0$ de $\mathcal{F}$ dont on sait qu'elle va d'un bout \`a l'autre de l'anneau $A$. Cela a un sens de dire qu'un point $\widetilde z\in\widetilde A$ est situ\'e \emph{\`a gauche} (respectivement \emph{\`a droite}) de $\widetilde\lambda_0$. Rappelons maintenant que $\widetilde f_{I,z'}$ d\'esigne le rel\`evement de $f$ \`a $\widetilde A_{z'}$ associ\'e \`a l'isotopie $I$. Ici, comme $I$ fixe $z'$, elle se rel\`eve en une isotopie qui joint $\mathrm{Id}\vert_{\widetilde A_{z'}}$ \`a $\widetilde f_{I,z'}$. 

\begin{lem}
\label{l.va vers-la-droite}
Consid\'erons un point $z\in A\setminus X$ et un relev\'e $\widetilde z$ de $z$ dans $\widetilde A$. Si $z$ est positivement r\'ecurrent pour $f$, alors le point $f(z)$ est aussi dans~$A$. Si, de plus, $\widetilde z$ est \`a droite au sens large de la feuille $\widetilde\lambda_0$, alors son image $\widetilde f_{I,z'}(\widetilde z)$ est \`a droite au sens strict de $\widetilde\lambda_0$. 
\end{lem}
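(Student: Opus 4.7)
The plan is to prove the two statements separately: the first using only the recurrence of $z$ together with property~(F3), the second using the foliated structure provided by Jaulent's theorem to lift $\gamma_{I',z}$ to a positively transverse arc in the universal cover.

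For the first statement, both $\widehat\alpha(\lambda_0)$ and $\widehat\omega(\lambda_0)$ are closed subsets of $\mathbb{S}^2$ which, by property~(F3), are either positively or negatively $f$-invariant. A standard recurrence argument then shows: if $E\subset\mathbb{S}^2$ is closed with $f(E)\subset E$ (resp.\ $f^{-1}(E)\subset E$), and if $z$ is positively recurrent with $z\notin E$, then no forward iterate of $z$ lies in $E$, since otherwise the return sequence $f^{n_k}(z)\to z$ would force $z$ into the closed set $E$. Applying this to $\widehat\alpha(\lambda_0)$ and $\widehat\omega(\lambda_0)$, we obtain $f(z)\in A$.

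For the second statement, I first note that property~(3) of Theorem~\ref{theo.Brouwer-feuillete-equivariant} gives that $\gamma_{I,z}$ and $\gamma_{I',z}$ are homotopic in $A_{z'}$ rel endpoints; consequently, their lifts in $\widetilde A_{z'}$ starting at $\widetilde z$ share the same endpoint, which is $\widetilde f_{I,z'}(\widetilde z)$. By property~(4), $\gamma_{I',z}$ is homotopic, in $A_{z'}\setminus X$ rel endpoints, to a path positively transverse to $\mathcal F$. Lifting the whole homotopy to the cover $\widetilde A_{z'}\setminus \pi_{z'}^{-1}(X)$ of $A_{z'}\setminus X$, one obtains an arc $\widetilde\gamma$ from $\widetilde z$ to $\widetilde f_{I,z'}(\widetilde z)$ which is positively transverse to the lifted foliation $\widetilde{\mathcal F}$, and of which $\widetilde\lambda_0$ is a leaf. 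The fundamental property of positively transverse arcs is that they can cross a given leaf in one prescribed direction only: with the paper's convention, $\widetilde\gamma$ may cross $\widetilde\lambda_0$ only from the left to the right. This shows that starting weakly on the right, $\widetilde\gamma$ remains weakly on the right, so $\widetilde f_{I,z'}(\widetilde z)$ is weakly on the right of $\widetilde\lambda_0$.

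It remains to upgrade to strict position, which I expect to be the delicate step. If $\widetilde z\in\widetilde\lambda_0$, transversality forces $\widetilde\gamma$ to leave $\widetilde\lambda_0$ instantly into the right side, and the one-way crossing property then forbids a return. If $\widetilde z$ is already strictly to the right, the only way to have $\widetilde f_{I,z'}(\widetilde z)\in\widetilde\lambda_0$ would be for $\widetilde\gamma$ to arrive at $\widetilde\lambda_0$ from the right at its endpoint; but the local normal form of positive transversality at that endpoint would force the arc to have just crossed in the forbidden direction, a contradiction. The main obstacle in the proof is precisely this last step: making the local analysis at endpoints rigorous, and ensuring that the closed set $\pi_{z'}^{-1}(X)$ (which lifts the singularities of $\mathcal F$) does not interfere with the notions of ``left'' and ``right'' of $\widetilde\lambda_0$ inside $\widetilde A_{z'}$, despite $X$ possibly disconnecting $A_{z'}\setminus X$.
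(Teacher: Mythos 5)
Your overall route is the same as the paper's: the first assertion via the (F3)-type invariance of $\widehat\alpha(\lambda_0)$ and $\widehat\omega(\lambda_0)$ together with positive recurrence, and the second by lifting, thanks to assertions~(3) and~(4) of the th\'eor\`eme~\ref{theo.Brouwer-feuillete-equivariant}, a path positively transverse to $\mathcal F$ joining $\widetilde z$ to $\widetilde f_{I,z'}(\widetilde z)$ and using that such a path can only cross $\widetilde\lambda_0$ from left to right. The first part is correct, and so is the identification of the endpoint of the lift (this uses, as you implicitly do, that $I$ was chosen to fix $z'$, so that $I$ lifts to an isotopy from the identity to $\widetilde f_{I,z'}$).

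There is, however, a genuine gap in the second part, and it is exactly the point you flag at the end without closing it. The dichotomy ``left/right of $\widetilde\lambda_0$'' only makes sense inside the band $\widetilde A=\pi_{z'}^{-1}(A)$: the leaf $\widetilde\lambda_0$ is properly embedded in $\widetilde A$ (because $\lambda_0$ runs from one end of the annulus $A$ to the other), but it is in general neither closed nor separating in $\widetilde A_{z'}$, since $\lambda_0$ accumulates on points of the frontiers of $\widehat\alpha(\lambda_0)$ and $\widehat\omega(\lambda_0)$, which do lie in $A_{z'}$. Your transverse lift $\widetilde\gamma$ is only known to live in $\widetilde A_{z'}\setminus\pi_{z'}^{-1}(X)$; if it left $\widetilde A$ it could pass from one side of $\widetilde\lambda_0$ to the other without any crossing, and the sentence ``starting weakly on the right, $\widetilde\gamma$ remains weakly on the right'' has no content. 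What is missing is the paper's observation that the transverse path $\gamma_z$ can be taken \emph{inside $A$}: both endpoints $z$ and $f(z)$ lie in $A$ (this is where the first assertion is used), and since $\lambda_0$ accumulates on every point of the frontier of $\widehat\alpha(\lambda_0)$ and of $\widehat\omega(\lambda_0)$ (property~(F1), combined with the alternative~(F2)), a positively transverse path with endpoints in $A$ cannot enter $\widehat\alpha(\lambda_0)\sqcup\widehat\omega(\lambda_0)$. Once $\widetilde\gamma\subset\widetilde A$ is known, every meeting with $\widetilde\lambda_0$ is a genuine left-to-right crossing inside the band, and strictness follows at once: after the first instant at which the path is strictly on the right it can never touch $\widetilde\lambda_0$ again, since it would have to reach the leaf from the right; your endpoint-by-endpoint local analysis then becomes unnecessary. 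As written, the containment in $A$ (equivalently in $\widetilde A$) is nowhere established, so the key step of your argument is not justified.
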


\begin{proof}[Preuve du lemme]
Les propri\'et\'es d'invariance des ensembles $\widehat\alpha(\lambda_0)$ et $\widehat\omega(\lambda_0)$ (propri\'et\'e~(F3) et son analogue pour $\widehat\alpha(\lambda_0)$) montrent que, si $z$ est positivement r\'ecurrent pour $f$, alors l'orbite de $f$ doit \^etre contenue dans $A$. Ceci montre la premi\`ere affirmation du lemme.

Passons \`a la preuve de la seconde affirmation. D'apr\`es les assertions~(3) et (4) du th\'eor\`eme~ 4.2, le chemin $\gamma_{I,z}$ est homotope dans $A_{z'}$ \`a un chemin $\gamma_z$ positivement transverse au feuilletage $\mathcal{F}$ 
et contenue dans $A$ car $\lambda_{0}$ s'accumule sur tous les points de la fronti\`ere de $\widetilde{\alpha}(\lambda_{0})$ et $\widetilde{\omega}(\lambda_{0})$. 
Par ailleurs, l'isotopie $I$ se rel\`eve dans $\mathrm{Homeo}(\widetilde A_{z'})$ en une isotopie qui joint l'identit\'e \`a $\widetilde f_{I,z'}$ (par d\'efinition de $\widetilde f_{I,z'}$ et parce que $I$ fixe $z'$). Ainsi, le chemin $\gamma_{I,z}$ se rel\`eve en un chemin joignant $\widetilde z$ \`a $\widetilde f_{I,z'}(\widetilde z)$. Le chemin $\gamma_z$ \'etant homotope \`a $\gamma_{I,z}$, il se rel\`eve \'egalement en un chemin $\widetilde\gamma_z$ joignant $\widetilde z$ \`a $\widetilde f_{I,z'}(\widetilde z)$. Bien s\^ur, ce chemin est positivement transverse au feuilletage~$\widetilde{\mathcal{F}}$ et il est contenu dans $\widetilde{A}$. En particulier, il ne peut rencontrer la feuille $\widetilde\lambda_0$ qu'en la traversant de la gauche vers la droite ce qui termine la preuve de la seconde affirmation du lemme.
\end{proof}

Comme le point $\widetilde z_0$ est situ\'e sur la feuille $\widetilde\lambda_0$, le lemme~\ref{l.va vers-la-droite} implique que le point $\widetilde f_{I,z'}(\widetilde z_0)$ est situ\'e strictement \`a droite de $\widetilde\lambda_0$, et que le point $\widetilde f_{I,z'}^{-1}(\widetilde z_0)$ est situ\'e strictement \`a gauche de $\widetilde\lambda_0$. Consid\'erons un disque topologique ouvert $\widetilde U\subset\widetilde A_{z'}$ contenant le point $\widetilde z_0$. Quitte \`a choisir $\widetilde U$ suffisamment petit, nous supposerons que $U:=\pi_{z'}(\widetilde U)$ est un disque libre, que les disques topologiques $U$, $f(U)$ et $f^{-1}(U)$ sont tous les trois contenus dans $A$, que le disque $\widetilde U$ est situ\'e \`a droite au sens strict de la feuille $T_{z'}^{-1}(\widetilde\lambda_0)$ et \`a gauche au sens strict de la feuille $T_{z'}(\widetilde\lambda_0)$, que le disque ouvert $\widetilde f_{I,z'}(\widetilde U)$ est situ\'e \`a droite au sens strict de la feuille $\widetilde\lambda_0$, et que  le disque ouvert $\widetilde f_{I,z'}^{-1}(\widetilde U)$ est situ\'e  \`a gauche au sens strict de $\widetilde\lambda_0$.

Nous allons montrer que la fonction $\alpha_{U,z'}$ est strictement positive sur $U$. Pour ce faire, nous divisons $U$ en deux parties~: soit $\widetilde U_G$ (respectivement $\widetilde U_D$) l'ensemble des points de $\widetilde U$ situ\'es \`a gauche (respectivement \`a droite) au sens large de la feuille $\widetilde\lambda_0$. Nous notons $U_G:=\pi_{z'}(\widetilde U_G)$ et $U_D:=\pi_{z'}(\widetilde U_D)$. Consid\'erons un point $z\in \mathrm{Rec}^+(f)\cap U$, et notons $\widetilde z$ le relev\'e de $z$ situ\'e dans $\widetilde U$. Pour simplifier les notations, posons $q:=\tau_U(z)>1$. Il existe alors un unique entier $p\in\ZZ$, tel que $\widetilde f_{I,z'}^q(\widetilde z)\in T_{z'}^p(\widetilde U)$. D'apr\`es la remarque~\ref{r.interpretation-alpha}, on a $\alpha_{U,z'}(z)=p$. Nous devons donc montrer que l'entier $p$ est strictement positif. Nous distinguons deux cas~:

\medskip

\noindent \textit{Cas~A. Le point $\widetilde f_{I,z'}^q(\widetilde z)$ est dans $T_{z'}^p(\widetilde U_G)$.} Par construction,  $\widetilde U_G$ est situ\'e \`a gauche de  $\widetilde\lambda_0$, donc $T_{z'}^k(\widetilde U_G)$ est situ\'e \`a gauche de $\widetilde\lambda_0$ pour tout $k\leq 0$. Mais $\widetilde f_{I,z'}(\widetilde U)$ est \`a droite au sens strict de $\widetilde\lambda_0$, ce qui implique, en utilisant le lemme~\ref{l.va vers-la-droite}, que le point $\widetilde f_{I,z'}^\ell(\widetilde z)$ est \`a droite au sens strict de $\widetilde\lambda_0$ pour tout $\ell\geq 1$. En particulier, $\widetilde f_{I,z'}^q(\widetilde z)$ est \`a droite au sens strict de $\widetilde\lambda_0$. Par cons\'equent, l'entier $p=\alpha_{U,z'}(z)$ doit \^etre strictement positif.

\medskip

\noindent \textit{Cas~B. Le point $\widetilde f_{I,z'}^q(\widetilde z)$ est dans $T_{z'}^p(\widetilde U_D)$.} Par construction,  $\widetilde U_D$ est situ\'e \`a gauche de $T_{z'}(\widetilde\lambda_0)$, donc  $T_{z'}^k(\widetilde U_D)$ est situ\'e \`a gauche de $\widetilde\lambda_0$ pour tout $k<0$. Mais, comme dans le premier cas, le point $\widetilde f_{I,z'}^\ell(\widetilde z)$ est \`a droite au sens strict de $\widetilde\lambda_0$ pour tout $\ell\geq 1$. Par cons\'equent, l'entier $p$ doit \^etre positif ou nul. Supposons maintenant que $p$ soit nul. Alors $\widetilde f_{I,z'}^q(\widetilde z)\in \widetilde U_D$, et donc $\widetilde f_{I,z'}^{q-1}(\widetilde z)\in \widetilde f_{I,z'}^{-1}(\widetilde U_D)$. Ceci est absurde car le disque $\widetilde f_{I,z'}^{-1}(\widetilde U_D)$ est \`a gauche au sens strict de $\widetilde\lambda_0$ par construction de $U$, et  $\widetilde f_{I,z'}^{q-1}(\widetilde z)$ est \`a droite au sens strict de $\widetilde\lambda_0$. Par cons\'equent, l'entier $p=\alpha_{U,z'}(z)$ doit \^etre strictement positif.

\medskip

Nous avons ainsi montr\'e que l'entier $\alpha_{U,z'}(z)$ est strictement positif pour tout point $z\in \mbox{Rec}^+(f)\cap U$. Par ailleurs,  le point $z_0$ est dans le support de $\mu$, donc $\mu$ charge le disque $U$. En utilisant le lemme~\ref{c.nombre-rotation-moyen-tend-vers-zero}, on en d\'eduit $\mu(\{z\in \mathbb{R}^2\;,\; \rho_{z'}(z)>0\})>0$,
ce qui montre l'assertion~(5) dans le cas consid\'er\'e.

\bigskip

Le cas o\`u $\infty\in\widehat\alpha(\lambda_0)$ et $z'\in\widehat\omega(\lambda_0)$ se traite \'evidemment de mani\`ere similaire. Dans ce cas, on trouve un disque libre $V$ tel que $\mu(V)>0$ et $\alpha_{V,z'}(z)<0$ pour tout point $z\in V\cap \mathrm{Rec}^+(f)$. On en d\'eduit que $\mu(\{z\in  \mathbb{R}^2\;,\; \rho_{z'}(z)<0\})>0$.

\bigskip

Dans le cas o\`u $\infty\in\widehat\omega(\lambda_0)$ et $z'\not\in\widehat\alpha(\lambda_0)$, on choisit un point fixe $z''\in\widehat\alpha(\lambda_0)\cap X$, on reprend la preuve du premier cas, en y rempla\c cant $z'$ par $z''$, et on obtient que \hbox{$\mu(\{z\in  \mathbb{R}^2\;,\; \rho_{z''}(z)>0\})>0$}. Le seul point \`a modifier est la preuve de la seconde affirmation du lemme~\ref{l.va vers-la-droite}. Notons que les arguments utilis\'es dans le premier cas ne s'appliquent pas ici, car l'isotopie $I$ ne fixe pas le point $z''$, et les chemins $\gamma_{I,z}$ et $\gamma_{I',z}$ ne sont pas homotopes dans l'anneau $A_{z''}$. On raisonne comme suit. Soit $z$ un point de $A\setminus X$, et $\widetilde z$ un relev\'e de $z$ dans $\widetilde A_{z''}$. On sait que le chemin $\gamma_{I',z}$ ne passe pas par le point $z''$ (puisque $z''\in X$). On consid\`ere alors le rel\`evement $\widetilde{f'}$ de $f\vert_{A_{z''}}$ \`a $\widetilde A_{z''}$ tel que le chemin $\gamma_{I',z}:t\mapsto f_t'(z)$ se rel\`eve dans  $\widetilde A_{z''}$ en un chemin joignant $\widetilde z$ \`a $\widetilde{f'}(\widetilde z)$. D'apr\`es l'assertion~(5) du th\'eor\`eme~\ref{theo.Brouwer-feuillete-equivariant}, il existe une isotopie $I''=(f_t'')_{t\in [0,1]}$ joignant l'identit\'e \`a $f$ dans $\mathrm{Homeo}(\mathbb{R}^2)$, qui fixe les  points $z'$ et $z''$, et telle que les chemins $\gamma_{I',z}$ et $\gamma_{I'',z}$ sont homotopes dans $\mathbb{R}^2\setminus\{z',z''\}$. Il en r\'esulte que le rel\`evement $\widetilde{f'}$ co\"{\i}ncide avec le rel\`evement $\widetilde f_{I'',z''}$. 
D'autre part, l'assertion~(1) du th\'eor\`eme~\ref{theo.Brouwer-feuillete-equivariant} nous dit que le lacet $\gamma_{I,z''}$ est homotope \`a un point dans $A_{z'}$. Ceci implique que $I$ est homotope \`a une isotopie qui fixe les points $\infty$, $z'$ et $z''$, et donc est homotope \`a $I''$. Par suite, les rel\`evements $\widetilde f_{I,z''}$ et $\widetilde f_{I'',z''}$ co\"{\i}ncident. Enfin, l'assertion~(4) du th\'eor\'eme~\ref{theo.Brouwer-feuillete-equivariant} affirme que le chemin $\gamma_{I',z}$ est homotope dans $A_{z'}\setminus X$ \`a un chemin $\gamma_z$ positivement transverse au feuilletage $\mathcal{F}$.  Puisque le chemin $\gamma_z$ est homotope \`a $\gamma_{I',z}$ dans $A_{z'}\setminus X$, donc dans $A_{z''}$, il se rel\`eve dans $\widetilde A_{z''}$ en un chemin $\widetilde\gamma_z$ joignant $\widetilde z$ \`a $\widetilde{f'}(\widetilde z)=\widetilde f_{I'',z''}(\widetilde z)=\widetilde f_{I,z''}(\widetilde z)$.  Ainsi nous avons montr\'e l'existence d'un chemin positivement transverse au feuilletage $\widetilde{\mathcal{F}}$ joignant  $\widetilde z$ \`a $\widetilde f_{I,z''}(\widetilde z)$ dans $\widetilde A_{z''}$. Ce fait \'etant \'etabli, on peut terminer la preuve du lemme~\ref{l.va vers-la-droite} et la preuve de l'assertion~(5) exactement comme dans le premier cas.

\bigskip

Le cas  o\`u $\infty\in\widehat\alpha(\lambda_0)$ et $z'\not\in\widehat\alpha(\lambda_0)$ se traite \'evidemment de mani\`ere similaire~: on choisit un point $z''\in\widehat\omega(\lambda_0)\cap X$ et on montre que $\mu(\{z\in  \mathbb{R}^2\;,\; \rho_{z''}(z)<0\})>0$.

\bigskip

Il reste \`a \'etudier le cas o\`u $\infty\in A=\mathbb{S}^2\setminus (\widehat \alpha(\lambda_0)\sqcup\widehat \omega(\lambda_0))$. Dans ce cas, on fixe $z''\in \widehat\alpha(\lambda_0)\cap X$ et $z'''\in\widehat\omega(\lambda_0)\cap X$. On rappelle que la notation $A_{z'',z'''}$ d\'esigne l'anneau $\mathbb{S}^2\setminus\{z'',z'''\}$, et que $\widetilde f_{z'',z'''}$ d\'esigne le rel\`evement de $f\vert_{A_{z'',z'''}}$ \`a $\widetilde A_{z'',z'''}$ qui fixe les relev\'es de $\infty$. On reprend la preuve du premier cas, en rempla\c cant l'anneau $A_{z'}$ par l'anneau $A_{z'',z'''}$, et le rel\`evement $\widetilde f_{I,z'}$ par le rel\`evement $\widetilde f_{z'',z'''}$. \`A nouveau, le seul point \`a modifier est la preuve de la seconde affirmation du lemme~\ref{l.va vers-la-droite}. On raisonne comme suit.  Soit $z$ un point de $A\setminus X$, et $\widetilde z$ un relev\'e de $z$ dans $\widetilde A_{z'',z'''}$. On consid\`ere le rel\`evement $\widetilde{f'}$ de $f\vert_{A_{z'',z'''}}$ \`a $\widetilde A_{z'',z'''}$ tel que le chemin $\gamma_{I',z}$ se rel\`eve en un chemin de $\widetilde z$ \`a $\widetilde{f'}(\widetilde z)$. En utilisant l'assertion~(5) du th\'eor\`eme~\ref{theo.Brouwer-feuillete-equivariant} comme dans le cas o\`u $\infty\in\widehat\omega(\lambda_0)$ et $z'\not\in\widehat\alpha(\lambda_0)$, on voit que $\widetilde{f'}$ co\"{\i}ncide avec le rel\`evement canonique $\widetilde f_{z'',z'''}$ (rappelons que ce dernier est le rel\`evement de $f\vert_{A_{z'',z'''}}$ qui fixe les relev\'es du point $\infty$).
L'assertion~(4) du th\'eor\'eme~\ref{theo.Brouwer-feuillete-equivariant} assure alors l'existence d'un chemin $\widetilde\gamma_z$ joignant $\widetilde z$ \`a $\widetilde{f'}(\widetilde z)=\widetilde f_{z'',z'''}(z)$ dans $\widetilde A_{z'',z'''}$, positivement transverse au relev\'e $\widetilde{\mathcal{F}}$ du feuilletage $\mathcal{F}$. On peut d\`es lors reprendre la fin de la preuve du premier cas (en rempla\c cant l'anneau $A_{z'}$ par l'anneau $A_{z'',z'''}$, et le rel\`evement $\widetilde f_{I,z'}$ par le rel\`evement $\widetilde f_{z'',z'''}$).  En utilisant le fait~\ref{f.nombre-rotation-8}, on en d\'eduit que $\mu(\{z\in  \mathbb{R}^2\;,\; \rho_{z''}(z)-\rho_{z'''}(z)>0\})>0$. On a donc $\mu(\{z\in  \mathbb{R}^2\;,\; \rho_{z''}(z)> 0\}>0$ ou $\mu(\{z\in  \mathbb{R}^2\;,\; \rho_{z'''}(z)<0\})>0$. Ceci termine la preuve de l'assertion~(5), et donc aussi de la proposition~\ref{theo.main-2}.
\end{proof}

\begin{remark}
Soit $\DD^{2n}$ le disque unit\'e de $\RR^{2n}$. C. Viterbo a montr\'e que tout diff\'eomorphisme hamiltonien de $\DD^{2n}$, qui vaut l'identit\'e au bord, admet un point fixe d'action symplectique non-nulle (\cite[proposition 4.2]{Viterbo1992}). Pour $n=1$, les diff\'eomorphismes hamiltoniens du disque sont simplement ceux qui pr\'eservent l'aire euclidienne usuelle et l'action symplectique n'est autre que l'int\'egrale du nombre de rotation. Le r\'esultat de Viterbo se traduit donc de la mani\`ere suivante. Pour tout diff\'eomorphisme $f$ du disque $\DD^2$ qui vaut l'identit\'e au bord et qui pr\'eserve l'aire euclidienne usuelle, si $I$ est une isotopie joignant l'identit\'e \`a $f$, il existe un point $z'\in\mathrm{Fix}(f)$ tel que l'int\'egrale de la fonction nombre de rotation $z\mapsto \rho_{I,z'}(z)$ par rapport \`a l'aire euclidienne est non-nulle. Ainsi le r\'esultat de Viterbo fournit donc une preuve de l'assertion~(5) de notre proposition~\ref{theo.main-2}, dans le cas o\`u l'hom\'eomorphisme $f$ se prolonge en un $C^1$-diff\'eomorphisme de la sph\`ere $\mathbb{S}^2$ et o\`u la mesure $\mu$ est donn\'ee par une forme volume. On se ram\`ene au disque $\DD^2$ en \'eclatant le point $\infty$.  
\end{remark}

\section{Les propri\'et\'es (P1), (P2) et les $C^1$-diff\'eomorphismes de la sph\`ere} 
\label{s.P1-P2-diffeos}

Le but de cette partie est de montrer, comme annonc\'e dans la proposition~\ref{p.extension-C1}, que les propri\'et\'es (P1) et (P2) sont v\'erifi\'ees par tout hom\'eomorphisme du plan qui s'\'etend en un $C^1$-diff\'eomorphisme de la sph\`ere.

\begin{proof}[Preuve de la proposition~\ref{p.extension-C1}]
Supposons donc que $f$ admet une extension de classe $C^1$ \`a $\mathbb{S}^2$ et commen\c cons par montrer que $f$ v\'erifie (P2). Nous n'avons besoin que de la diff\'erentiabilit\'e de $f$ en $\infty$ et plus pr\'ecis\'ement de l'existence d'une compactification $\overline{\RR^2}$ de $\RR^2$ par ajout d'un cercle $\Sigma$ \`a l'infini tel que $f$ se prolonge en un hom\'eomorphisme de $\overline{\RR^2}$.  Fixons $z'\in  \mathrm{Fix}(f)$. L'hom\'eomorphisme $f\vert_{A_{z'}}$ se prolonge en un hom\'eomorphisme de $A_{z'}\sqcup\Sigma$. Le rev\^etement universel de $A_{z'}\sqcup\Sigma$ s'\'ecrit $\widetilde  A_{z'}\sqcup\widetilde{\Sigma}$, o\`u $\widetilde\Sigma$ est le rev\^etement universel de $\Sigma$ et $\widetilde f_{I,z'}$ se prolonge en un hom\'eomorphisme $\overline{\widetilde f_{I,z'}}$ de $\widetilde  A_{z'}\sqcup\widetilde{\Sigma}$. Si $\mathrm{Fix}(f)$ est compact, la propri\'et\'e (P2) est \'evidemment v\'erifi\'ee. Sinon il y a des points fixes de l'extension de $f\vert_{A_{z'}}$ sur ${\Sigma}$ et il existe donc un entier $p$ tel que $\overline{\widetilde f_{I,z'}}(\widetilde z)=\overline{T_{z'}}^p(\widetilde z)$ pour tout relev\'e $\widetilde z$ d'un point fixe appartenant \`a $\Sigma$. On a donc \'egalement  $\widetilde f_{I,z'}(\widetilde z)=T_{z'}^p(\widetilde z)$ pour tout relev\'e $\widetilde z$ d'un point fixe proche de $\Sigma$, ce qui n'est rien d'autre que la propri\'et\'e (P2).

\medskip

Montrons maintenant que (P1) est v\'erifi\'ee. La fonction $\mathrm{Enlace}_I$, restreinte \`a l'ensemble des couples de points fixes distincts, \'etant localement constante, il suffit de d\'emontrer les deux assertions suivantes~:

\smallskip

\noindent\textbf{(i)}\enskip Il existe un voisinage $W$ de $\infty$ et une constante $M$ telle que $\vert\mathrm{Enlace}_I(z,z')\vert\leq M$ pour tout $z\in\mathrm{Fix}(f)$ et tout $z'\in\mathrm{Fix}(f)\cap W$ distincts.

\smallskip

\noindent\textbf{(ii)}\enskip Pour tout $z''\in \mathrm{Fix}(f)$ il existe un voisinage point\'e $W$ de $z''$ et une constante $M$ telle que $\vert\mathrm{Enlace}_I(z,z')\vert\leq M$ pour tous $z,z'\in\mathrm{Fix}(f)\cap W$ distincts.

\smallskip

Remarquons tout d'abord qu'en vertu de la formule~\eqref{e.change-isotopie-2} dans l'introduction,  les assertions~(i) et~(ii) ne d\'ependent pas du choix de l'isotopie~$I$. On pourra donc supposer que $I$ est une isotopie adapt\'ee. La propri\'et\'e~(i) est alors un cas particulier de l'assertion~(4) de la proposition~\ref{theo.main-2}~: on rappelle en effet que, si $z$ et $z'$ sont deux points fixes de $f$, alors $\rho_{z'}(z)=\mathrm{Enlace}_I(z,z')$.

Montrons maintenant (ii) en donnant \`a $z''$ le r\^ole pris auparavant par $\infty$. On peut toujours supposer que l'isotopie $I$ fixe $z''$. Puisque $f$ est diff\'erentiable en $z''$, on peut compactifier l'anneau $A_{z''}$ par ajout d'un cercle au bout correspondant \`a $z''$ de telle fa\c con que $f_{z''}$ se prolonge en un hom\'eomorphisme. Les arguments utilis\'es dans la v\'erification de la propri\'et\'e (P1) nous permettent de supposer, quitte \`a changer d'isotopie, que pour tout point fixe $z\not=z''$ proche de $z''$, on a $\mathrm{Enlace}_I{(z,z'')}=0$.  Le preuve de~(i) nous permet quant \`a elle de dire que, si $z'\not=z''$ est proche de $z''$, alors, pour tout point fixe $z\not\in\{z',z''\}$, on a $\left|\rho_{z'}(z)-\rho_{z''}(z)\right|=\left|\mathrm{Enlace}_I(z,z')-\mathrm{Enlace}_I(z,z'')\right|\leq M$. On en d\'eduit que $\vert\mathrm{Enlace}_I(z,z')\vert \leq M$ si $z$ et $z'$ sont tous deux proches de $z''$. 
\end{proof}

\begin{remark}
La fonction $(z,z')\mapsto\mathrm{Enlace}_I(z,z')$ est l'un des objets utilis\'es par J.-M. Gambaudo et  \'E. Ghys dans~\cite{GambaudoGhys2004} pour construire des quasi-morphismes sur le groupe des diff\'eomorphismes d'une surface pr\'eservant l'aire. On notera que les arguments de la preuve du lemme 4.1 de~\cite{GambaudoGhys2004} fournissent une preuve (diff\'erente de celle propos\'ee ci-dessus) du fait que tout hom\'eomorphisme du plan qui se prolonge en  un $C^1$-diff\'eomorphisme de $\mathbb{S}^2$ v\'erifie la propri\'et\'e~(P1).
\end{remark}

\section*{Appendice. Quelques exemples d'hom\'eomorphismes du plan qui v\'erifient, ou pas, les propri\'et\'es (P1) et (P2)}

Le but de cet appendice est de tenter d'\'eclairer le sens des propri\'et\'es (P1) et (P2), en d\'ecrivant quelques exemples d'hom\'eomorphismes du plan qui satisfont, ou ne satisfont pas, l'une et/ou l'autre de ces propri\'et\'es. Nous identifions $\mathbb{R}^2$ \`a $\mathbb{C}$, afin de pouvoir utiliser les notations complexes. Dans les exemples suivants, nous notons $z_n$ le point d'abscisse $n$ sur l'axe r\'eel, et $B_n$ la boule euclidenne de centre $z_n$ de rayon $\frac{1}{4}$.

\subsection*{Exemple~1. Un hom\'eomorphisme  qui ne satisfait ni (P1), ni (P2).}
 Consid\'erons le diff\'eomorphisme $f\in \mathrm{Diff}^\infty_+(\mathbb{R}^2)$ d\'efini (dans une coordonn\'ee complexe) par la formule
$$f\left(re^{2i\pi\theta}\right)=re^{2i\pi(\theta+r)}.$$
 On construit facilement une isotopie $I=(f_t)_{t\in [0,1]}$ joignant l'identit\'e \`a $f$~: le diff\'eomorphisme~$f_t$ est obtenu en rempla\c cant $\theta+r$ par $\theta+t\,r$ dans la formule ci-dessus. Pour tout $n\in\ZZ$, le point $z_n$ est fixe par $f$, et on a $\mathrm{Enlace}_I(0,z_n)=\mathrm{Tourne}_I(z_n)=n$. Par cons\'equent, $f$ ne satisfait ni (P1) ni~(P2).

\subsection*{Exemple~2. Un hom\'eomorphisme qui satisfait (P1), mais pas (P2).}
 Consid\'erons maintenant le diff\'eomorphisme $f\in \mathrm{Diff}^\infty_+(\mathbb{R}^2)$ d\'efini par la formule 
$$f\left(re^{2i\pi\theta}\right)=re^{2i\pi\left(\theta+\sin\left(\pi r+ \frac{\pi}{2}\right)\right)}.$$
On construit une isotopie $I=(f_t)_{t\in [0,1]}$ joignant l'identit\'e \`a $f$, comme dans l'exemple pr\'ec\'edent~: le diff\'eomorphisme $f_t$ est obtenu en rempla\c cant $\sin\left(\pi r+ \frac{\pi}{2}\right)$ par $t\,\sin\left(\pi r+ \frac{\pi}{2}\right)$ dans la formule d\'efinissant $f$. Si $z$ et $z'$ sont deux points distincts du plan, on v\'erifie facilement que  
$$\mathrm{Enlace}_I(z,z')=\sin\left(\pi r+ \frac{\pi}{2}\right) $$
o\`u $r=\max(|z|,|z'|)$. En particulier, $f$ v\'erifie (P1). Rappelons que la notation $z_n$ d\'esigne le point d'abscisse $n$ sur l'axe r\'eel. Pour montrer que $f$ ne v\'erifie pas (P2), il suffit de remarquer que $z_n$ est fixe par $f$ pour tout $n\in\ZZ$, et que 
$$\mathrm{Tourne}_I(z_n)=\sin\left(\pi n+ \frac{\pi}{2}\right) = \left\{ 
\begin{array}{rl}
1 & \mbox{si $n$ est pair} \\
-1 & \mbox{si $n$ est impair}
\end{array}\right.$$

\subsection*{Exemple~3. Un hom\'eomorphisme qui satisfait (P2), mais pas (P1).}
Pour chaque entier $n$, consid\'erons une  fonction $\alpha_n:[0,1/4]\to\mathbb{R}$, lisse, qui s'annule au voisinage de $0$ et $1/4$, et qui vaut $n$ en $1/8$. Rappelons que $z_n$ d\'esigne le point d'abscisse $n$ sur l'axe r\'eel, et $B_n$ la boule euclidienne ferm\'ee de centre $z_n$ et de rayon $1/4$.  Consid\'erons alors le diff\'eomorphisme $f\in\mathrm{Diff}^\infty_+(\mathbb{R}^2)$ qui est d\'efini sur la boule~$B_n$ par la formule
$$f\left(z_n+re^{2i\pi\theta}\right)=z_n+re^{2i\pi(\theta+\alpha_n(r))},$$
et qui co\"{\i}ncide avec l'identit\'e sur $\mathbb{R}^2\setminus\bigsqcup_{n\in\ZZ} B_n$. On construit une isotopie $I=(f_t)_{t\in [0,1]}$ joignant l'identit\'e \`a $f$ comme dans les exemples pr\'ec\'edents~: le diff\'eomorphisme $f_t$ est obtenu en rempla\c cant $\alpha_n(r)$ par $t\,\alpha_n(r)$ dans la formule d\'efinissant $f$. Quel que soit $n\in\ZZ$, si on note $z_n':=z_n+\frac{1}{8}$, alors $z_n'$ est un point fixe de $f$, et on a 
$$\mathrm{Enlace}_I(z_n,z_n')=n.$$
En particulier, $f$ ne v\'erifie pas (P1). Par ailleurs, $f_t$ co\"{\i}ncide avec l'identit\'e sur $\mathbb{R}^2\setminus\bigsqcup_{n\in\ZZ} B_n$, et tout lacet contenu dans $\bigsqcup_{n\in\ZZ} B_n$ qui fait le tour de l'origine est en fait contenu dans la boule $B_0$. Ceci montre que $\mathrm{Tourne}_I(z)=0$ pour tout point $z\in \mathrm{Fix}(f)$ situ\'e hors de $B_0$. En particulier, $f$ v\'erifie~(P2).

\subsection*{Exemple~4. Un hom\'eomorphisme qui satisfait (P1) et (P2), mais ne s'\'etend pas en un $C^1$-diff\'eomorphisme de la sph\`ere.}
Fixons $\theta_0\in\RR\setminus\ZZ$, et consid\'erons une fonction lisse $\alpha:[0,1/4]\to\mathbb{R}$ qui vaut $\theta_0$ au voisinage de $0$, et qui s'annule au voisinage de $1/4$. Consid\'erons alors le diff\'eomorphisme $f\in\mathrm{Diff}^\infty_+(\mathbb{R}^2)$, d\'efini par la formule
$$f\left(z_n+re^{2i\pi\theta}\right)=z_n+re^{2i\pi(\theta+\alpha(r))}$$
sur la boule $B_n$, et qui co\"{\i}ncide avec l'identit\'e sur $\mathbb{R}^2\setminus\bigsqcup_{n\in\ZZ} B_n$. On construit une isotopie $I=(f_t)_{t\in [0,1]}$ joignant l'identit\'e \`a $f$ en rempla\c cant $\alpha(r)$ par $t\,\alpha(r)$  dans la formule d\'efinisssant $f$. Le m\^eme argument que dans l'exemple~3 montre que $f$ v\'erifie (P2). Si $z,z'$ sont deux points distincts du plan, on a 
$$\mathrm{Enlace}_I(z,z')=\left\{ 
\begin{array}{ll}
\alpha\left(\max(|z-z_n|,[z'-z_n|)\right) & \mbox{ si }~z\in B_n~\mbox{ et }z'\in B_n  \\
0 & \mbox{ sinon}
\end{array}\right.$$
Par cons\'equent, $f$ v\'erifie (P1).
Notons maintenant $\bar f$ l'extension de $f$ \`a $\mathbb{S}^2=\mathbb{R}^2\cup\{\infty\}$ obtenue en posant $f(\infty)=\infty$. Alors $\bar f$ est un diff\'eomorphisme de $\mathbb{S}^2$ pour la structure diff\'erentielle usuelle, mais la diff\'erentielle de $\bar f$ n'est pas continue en $\infty$~: en effet, la diff\'erentielle de $\bar f$ en $\infty$ est l'identit\'e, mais la diff\'erentielle en $z_n$ est la rotation d'angle $2\pi\theta_0$ pour tout $n$.

\begin{remark}
Le diff\'eomorphisme $f$ construit ci-dessus commute avec la translation $z\mapsto z+1$. Ainsi, aucune partie compacte non-vide de $\RR^2$ n'est invariante par tout \'el\'ement de $\mathrm{Homeo}(\RR^2)$ qui commute avec $f$. 
Notons \'egalement que la restriction de la mesure de Lebesgue \`a la boule $B_n$ est une mesure de masse finie $f$-invariante, pour tout $n\in\ZZ$.  Ceci montre que l'hypoth\`ese ``$f$ s'\'etend en un $C^1$-diff\'eomorphisme de $\mathbb{S}^2$" est n\'ecessaire dans l'assertion~2 du th\'eor\`eme~\ref{theo.main}, comme nous l'avons annonc\'e dans la remarque~\ref{r.contre-exemple-pas-C1}. 
\end{remark}

\subsection*{Exemple~5. Un hom\'eomorphisme qui satisfait (P1) et (P2), pour lequel il existe un point $z'\in\mathrm{Fix}(f)$ tel que la fonction $z\mapsto \rho_{z'}(z)$ n'est pas born\'ee.} 
Nous avons montr\'e dans la remarque~\ref{r.nombre-rotation-pas-borne} que, pour tout hom\'eomorphisme $f$ du plan pr\'eservant l'orientation et v\'erifiant les propri\'et\'es (P1) et (P2), et tout point $z'\in\mathrm{Fix}(f)$, la fonction $z\mapsto \rho_{z'}(z)$ est born\'ee sur tout disque libre pour $f$. Cette fonction n'est cependant pas globalement born\'ee en g\'en\'eral comme le montre l'exemple tr\`es simple suivant.  Consid\'erons un $C^\infty$-diff\'eomorphisme $\rho$ de la demi-droite $[0,+\infty[$ qui fixe chaque entier, et tel que, pour tout $n\in\NN$, et tout $r\in ]n,n+1[$,  on a $\rho^k(r)\rightarrow n$ quand $k\to -\infty$ et $\rho^k(r)\rightarrow n+1$ quand $k\to +\infty$. Consid\'erons alors le diff\'eomorphisme  $f\in \mathrm{Diffeo}^\infty_+(\mathbb{R}^2)$ d\'efini par la formule
$$f\left(re^{2i\pi\theta}\right)=\rho(r)e^{2i\pi(\theta+r+\frac{1}{2})}.$$
 On construit facilement une isotopie $I=(f_t)_{t\in [0,1]}$ joignant l'identit\'e \`a $f$~: le diff\'eomorphisme~$f_t$ est obtenu en rempla\c cant $\theta+r$ par $\theta+t\,r$, et $\rho(r)$ par $(1-t)r+t\rho(r)$ dans la formule ci-dessus. Notons $C_n$ le cercle centr\'e \`a l'origine de rayon $n$. Pour tout point $z$ tel que $n<|z|<n+1$, l'$\alpha$-limite de l'orbite pour $f$ du point $z$ est le cercle $C_n$, et l'$\omega$-limite de cette orbite est le cercle $C_{n+1}$. Par ailleurs, la restriction de $f$ au cercle $C_n$ est une rotation d'angle $2\pi(n+1/2)$. En particulier, $f$ n'a aucun point fixe sur $C_n$. Ceci montre que l'origine est le seul point fixe de $f$, ce qui prouve que $f$ v\'erifie les propri\'et\'es~(P1) et~(P2). Ceci montre \'egalement que, pour tout point $z$ tel que $|z|=n$, on a $\rho_{0}(z)=n+1/2$. En particulier, la fonction $z\mapsto \rho_{0}(z)$ n'est pas born\'ee. Remarquons qu'on peut \'egalement exhiber une mesure $\mu$, de masse finie, $f$-invariante, telle que la fonction $z\mapsto \rho_{0}(z)$ n'est pas born\'ee sur le support de $\mu$~: il suffit de prendre par exemple $\mu:=\sum_{n>0} 2^{-n} \mu_n$ o\`u $\mu_n$ est la mesure de Lebesgue (normalis\'ee pour avoir une masse totale \'egale \`a $1$) sur le cercle $C_n$.
 
 \subsection*{Exemple~5bis. Un hom\'eomorphisme qui satisfait (P1) et (P2), pour lequel il existe $z'\in\mathrm{Fix}(f)$ tel que la fonction $z\mapsto \rho_{z'}(z)$ n'est pas born\'ee sur les compacts de $\RR^2$.} 
 Dans l'exemple~5 ci-dessus, la fonction $z\mapsto \rho_{0}(z)$ est born\'ee sur tout compact de $\RR^2$. Il est cependant facile de modifier cet exemple, pour que ce ne soit plus le cas. Il suffit de prendre maintenant un hom\'eomorphisme $\rho$ de la demi-droite $[0,+\infty[$ qui fixe le point $1/n$ pour tout $n>0$, et tel que, pour tout $r\in ]1/(n+1),1/n[$,  on a $\rho^k(r)\rightarrow 1/n$ quand $k\to -\infty$ et $\rho^k(r)\rightarrow 1/(n+1)$ quand $k\to +\infty$, puis de consid\'erer l'hom\'eomorphisme $f$ d\'efini par la m\^eme formule que dans l'exemple~5. L'hom\'eomorphisme ainsi obtenu n'est pas diff\'erentiable \`a l'origine, mais v\'erifie les propri\'et\'es (P1) et (P2) pour la m\^eme raison que dans l'exemple~5~: l'origine est le seul point fixe de $f$. D'autre part, pour tout $z$ tel que $|z|=1/n$, on a $\rho_{0}(z)=n+1/2$, ce qui implique que la fonction $z\mapsto \rho_{0}(z)$ n'est pas born\'ee au voisinage de l'origine.

\subsection*{Exemple~6. Un hom\'eomorphisme qui satisfait (P1) et (P2), tel que la fonction $z\mapsto \rho_{z'}(z)$ est born\'ee pour tout $z'\in\mathrm{Fix}(f)$, mais tel que la fonction $(z,z')\mapsto \rho_{z'}(z)$ n'est pas born\'ee.} 
Nous avons montr\'e au cours de la preuve de l'assertion~(3) de la proposition~\ref{theo.main-2} que, pour tout hom\'eomorphisme qui v\'erifie les propri\'et\'es~(P1) et~(P2), la fonction $(z,z')\mapsto \rho_{z'}(z)$ est localement born\'ee sur $(\RR^2\setminus\mathrm{Fix}(f))\times \mathrm{Fix}(f)$. Les deux exemples pr\'ec\'edents montrent, qu'\`a $z'$ fix\'e, la fonction $z\mapsto \rho_{z'}(z)$ n'est pas globalement born\'ee en g\'en\'eral. Nous allons maintenant d\'ecrire un exemple tel que la fonction $z\mapsto \rho_{z'}(z)$ est born\'ee sur $\RR^2\setminus\{z'\}$ pour tout $z'\in\mathrm{Fix}(f)$, mais tel que la fonction $(z,z')\mapsto \rho_{z'}(z)$ n'est pas globalement born\'ee sur $(\RR^2\setminus\mathrm{Fix}(f))\times \mathrm{Fix}(f)$.

Comme dans l'exemple~4, $z_n$ d\'esigne le point d'abscisse $n$ sur l'axe r\'eel, et $B_n$ la boule euclidienne ferm\'ee de centre $z_n$ et de rayon $1/4$, pour $n\in\ZZ$. Consid\'erons un hom\'eomorphisme $\rho$ de l'intervalle $[0,1/4]$, qui fixe les points $0$, $1/8$ et $1/4$, et tel que l'orbite future pour $\rho$ de tout point de $]0,1/4[$ tend vers le point $1/8$.  Pour tout $n$, consid\'erons une fonction lisse $\alpha_n:[0,1/4]\to\mathbb{R}$ qui s'annule au voisinage de $0$ et au voisinage de $1/4$, et telle que $\alpha_n(1/8)= n+1/2$. Consid\'erons alors le diff\'eomorphisme $f\in\mathrm{Diff}^\infty_+(\mathbb{R}^2)$, d\'efini par la formule
$$f\left(z_n+re^{2i\pi\theta}\right)=z_n+\rho(r)e^{2i\pi(\theta+\alpha_n(r))}$$
sur la boule $B_n$, et qui co\"{\i}ncide avec l'identit\'e sur $\mathbb{R}^2\setminus\bigsqcup_{n\in\ZZ} B_n$. On construit une isotopie $I=(f_t)_{t\in [0,1]}$ joignant l'identit\'e \`a $f$ en rempla\c cant $\alpha_n(r)$ par $t\,\alpha_n(r)$, et $\rho(r)$ par $(1-t)r+t\rho(r)$, dans la formule d\'efinissant $f$. Le choix de l'hom\'eomorphisme $\rho$ implique que, quel que soit $n\in\ZZ$, l'orbite future pour $f$ de tout point $z\in B_n\setminus\{z_n\}$ va s'accumuler sur le cercle de centre $z_n$ et de rayon $1/8$. Sur ce cercle, $f$ agit comme une rotation d'angle $2\pi(n+1/2)$. En particulier, $f$ n'a pas de point fixe sur ce cercle. Ainsi, les seuls points fixes de $f$ sont les points $z_n$, et les points de $\RR^2\setminus \bigsqcup_{n\in\ZZ}B_n$. On en d\'eduit facilement que, pour tout couple de points distincts $z,z'\in\mathrm{Fix}(f)$, on a $\mathrm{Enlace}_I(z,z')=0$. En particulier, $f$ v\'erifie la propri\'et\'e~(P1). Les m\^emes arguments que dans l'exemple~4 montrent que $f$ v\'erifie \'egalement la propri\'et\'e~(P2). Par contre, la fonction $(z,z')\mapsto \rho_{z'}(z)$ n'est pas born\'ee~: en effet, si $w_n$ est un point du cercle de centre $z_n$ et de rayon $1/8$, alors on a clairement $\rho_{z_n}(w_n)=n+1/2$.


\begin{thebibliography}{99}
\bibitem{Bonatti1989}
Bonatti, Christian. Un point fixe commun pour des diff\'eomorphismes commutants de $S^2$.
\textit{Ann. of Math. (2)} \textbf{129} (1989), no. 1, 61--69. 

\bibitem{Bonatti1990}
Bonatti, Christian. Diff\'eomorphismes commutants des surfaces et stabilit\'e des fibrations en tores. 
\textit{Topology} \textbf{29} (1990), no. 1, 101--126. 

\bibitem{BrownKister1984}
Brown, Morton  and Kister, James M. Invariance of complementary domains of a fixed point set.
\textit{Proc. Amer. Math. Soc.} \textbf{91} (1984), no.	3, 503--504.

\bibitem{Brouwer1912}
Brouwer, Luitzen E. J.
Beweis des ebenen Translationssatzes.
\textit{Math. Ann.} \textbf{72} (1912), 37--54.

\bibitem{DruckFangFirmo2002}
Druck, Suely; Fang, Fuquan; Firmo, Sebasti\~ao. Fixed points of discrete nilpotent group actions on $S^2$. 
\textit{Ann. Inst. Fourier} \textbf{52} (2002), no. 4, 1075--1091. 

\bibitem{Firmo2005}
Firmo, Sebasti\~ao. A note on commuting diffeomorphisms on surfaces. \textit{Nonlinearity} \textbf{18} (2005), no. 4, 
1511--1526. 

\bibitem{Franks1988}
Franks, John. Generalizations of the Poincar\'e-Birkhoff theorem. \textit{Ann. of Math. (2)} \textbf{128} (1988), no. 1, 139--151. 

\bibitem{FranksHandelParwani2007}
Franks, John~; Handel, Michael~; Parwani, Kamlesh. Fixed points of abelian actions on $S^2$. \textit{Ergodic Theory Dynam. Systems} \textbf{27} (2007), no. 5, 1557--1581. 

\bibitem{FranksHandelParwani2007-2}
 Franks, John; Handel, Michael; Parwani, Kamlesh.
 Fixed points of abelian actions. \textit{J. Mod. Dyn.} \textbf{1} (2007), no. 3, 443--464.
 
 \bibitem{GambaudoGhys2004}
  Gambaudo, Jean-Marc~; Ghys, \'Etienne. Commutators and diffeomorphisms of surfaces. 
  \textit{Ergodic Theory Dynam. Systems} \textbf{24} (2004), no. 5, 1591--1617. 
 
 \bibitem{Guillou1994}
 Guillou, Lucien. 
 Th\'eor\`eme de translation plane de Brouwer et g\'en\'eralisations du th\'eor\`eme de Poincar\'e-Birkhoff. 
 \textit{Topology} \textbf{33} (1994), no. 2, 331--351.

\bibitem{Handel1992}
Handel, Michael.
Commuting homeomorphisms of $S^2$. \textit{Topology} \textbf{31} (1992), no. 2, 293--303. 

\bibitem{Jaulent2010}
Jaulent, Olivier.
\textit{Existence d'un feuilletage positivement transverse \`a un hom\'eomorphisme de surface.} En pr\'eparation.

\bibitem{Kneser1926}
Kneser, Hellmuth.
Die Deformationss\"atze der einfach zusammenhŠngenden Fl\"achen. 
\textit{Math. Z.} \textbf{25} (1926), no. 1, 362--372. 

\bibitem{LeCalvez2005}
Le Calvez, Patrice. 
Une version feuillet\'ee \'equivariante du th\'eor\`eme de translation de Brouwer. \textit{Publ. Math. Inst. Hautes \'etudes Sci.} \textbf{102} (2005), 1--98.

\bibitem{LeRoux2001}
Le Roux, Fr\'ed\'eric.
\'Etude topologique de l'espace des hom\'eomorphismes de Brouwer. I. 
\textit{Topology} \textbf{40} (2001), no. 5, 1051--1087.

\bibitem{Lima1964}
Lima, Elon L. 
Commuting vector fields on $S^{2}$. \textit{Proc. Amer. Math. Soc.} \textbf{15} (1964), 138--141.

\bibitem{Mann2011}
Mann, Kathryn.
\textit{Bounded orbits and global fixed points for groups acting on the plane}.
\texttt{arXiv:1103.5060}.

\bibitem{Plante1986}
Plante, Joseph F. Fixed points of Lie group actions on surfaces. \textit{Ergodic Theory Dynam. Systems} \textbf{6} (1986), no. 1, 149--161. 

\bibitem{Viterbo1992}
Viterbo, Claude.
Symplectic topology as the geometry of generating functions. 
\textit{Math. Ann.} \textbf{292} (1992), no. 4, 685--710. 

\end{thebibliography}
\end{document}